\numberwithin{equation}{section}
\numberwithin{equation}{section}
\theoremstyle{plain}
\newtheorem{theorem}{Theorem}[section]
\newtheorem{corollary}[theorem]{Corollary}
\newtheorem{proposition}[theorem]{Proposition}
\newtheorem{lemma}[theorem]{Lemma}
\theoremstyle{remark}
\newtheorem{remark}[theorem]{Remark}
\newtheorem{example}[theorem]{Example}
\theoremstyle{definition}
\newtheorem*{question*}{Question}
\newcommand{\xii}{\bm{\xi}}
\newcommand{\oomega}{\bm{\omega}}
\newcommand{\barf}{\overline{f}}
\newcommand{\loc}{\mathrm{loc}}
\newcommand{\qq}{\mathbf{q}}
\newcommand{\pp}{\mathbf{p}}
\newcommand{\HH}{\mathcal{H}}
\newcommand{\PP}{\mathcal{P}}
\newcommand{\R}{\mathbb{R}}
\newcommand{\N}{\mathbb{N}}
\newcommand{\iii}{\mathtt{i}}
\newcommand{\jjj}{\mathtt{j}}
\newcommand{\kkk}{\mathtt{k}}
\renewcommand{\lll}{\mathtt{l}}
\newcommand{\aaa}{\mathtt{a}}
\newcommand{\bbb}{\mathtt{b}}
\newcommand{\A}{\mathrm{A}}
\renewcommand{\ge}{\geqslant}
\renewcommand{\le}{\leqslant}
\renewcommand{\geq}{\geqslant}
\renewcommand{\leq}{\leqslant}
\DeclareMathOperator{\dimloc}{dim_{loc}}
\DeclareMathOperator{\udimloc}{\overline{dim}_{loc}}
\DeclareMathOperator{\ldimloc}{\underline{dim}_{loc}}
\DeclareMathOperator{\dimh}{dim_H}
\DeclareMathOperator{\dima}{dim_A}
\DeclareMathOperator{\dist}{dist}
\DeclareMathOperator{\diam}{diam}
\DeclareMathOperator{\spt}{spt}
\newlist{aenumerate}{enumerate}{1}
\setlist[aenumerate, 1]{label=(A\arabic{aenumeratei})}
\title[Multifractal analysis of pointwise Assouad dimension]{
Multifractal analysis for the pointwise Assouad dimension of self-similar measures
}
\author{Roope Anttila}
\address{Research Unit of Mathematical Sciences\\  P.O.Box 8000, FI-90014, University of Oulu, Finland }
\email{roope.anttila@oulu.fi}
\author{Ville Suomala}
\address{Research Unit of Mathematical Sciences\\  P.O.Box 8000, FI-90014, University of Oulu, Finland }
\email{ville.suomala@oulu.fi}
\date{\today}
\subjclass[2020]{28A80 (Primary); 37C45 (Secondary)}
\keywords{pointwise Assouad dimension, self-similar measure, multifractal analysis, doubling measure.}
\thanks{RA is supported by the Magnus Ehrnrooth foundation}
\begin{document}

\begin{abstract}
We quantify the pointwise doubling properties of self-similar measures using the notion of pointwise Assouad dimension. We show that all self-similar measures satisfying the open set condition are pointwise doubling in a set of full Hausdorff dimension, despite the fact that they can in general be non-doubling in a set of full Hausdorff measure. More generally, we carry out multifractal analysis by determining the Hausdorff dimension of the level sets of the pointwise Assouad dimension.
\end{abstract}

\maketitle
\section{Introduction}
In analysis and geometry, one often encounters situations where indirect information about size and structure is obtained by comparing a family of balls to the family obtained by the dilations of the balls by a certain ratio. The classical $5R$-covering theorem is a prototype case: One can extract a packing from an arbitrary collection of balls which still covers the collection after dilating the radii by a uniform constant. For measures, analogous and often finer tools are available through the use of Vitali type covering theorems, which are on the other hand intimately related to the doubling properties of the measure.

A Borel measure $\mu$ supported on a metric space $X$, and assigning finite measure to bounded sets, is \emph{doubling}, if there is a constant $C>0$ such that for any $x\in X$ and $r>0$, we have 
\begin{equation}\label{eq:doubling-general}
  \mu(B(x,2r))\leq C\mu(B(x,r))<\infty,
\end{equation}
where $B(x,r)$ is the closed ball of radius $r>0$ and center $x$. This condition is a mild regularity requirement which, in many cases, streamlines analysis on $X$. For instance, the above mentioned Vitali covering theorem, and hence the Lebesgue differentiation theorem hold for all doubling measures \cite{Heinonen2001} providing a natural framework e.g. for harmonic analysis and analysis on metric spaces (see  \cite{Stein1993, Heinonen2001, MackayTyson2010, HeinonenKoskelaShanmugalingamTyson2015, BBL2017}). For finer analysis, the global condition \eqref{eq:doubling-general} is often insufficient since most measures are not doubling and even if $\mu$ is doubling the estimate \eqref{eq:doubling-general} will most likely be sub-optimal for some/many/typical $x\in X$. 

For a finer study of the doubling properties of measures, we say that a measure $\mu$ is 
\emph{(pointwise) doubling at $x\in X$}, if there is some $0<C(x)<\infty$ such that $\mu(B(x,2r))\le C(x)\mu(B(x,r))<\infty$ for all $r>0$. This is easily seen to be equivalent to the condition that for all $\gamma>1$,
\begin{equation}\label{eq:pointwise-doubling}
C_\gamma(x):=\inf_{r>0}\frac{\mu(B(x,\gamma r))}{\mu(B(x,r))}<\infty\,.
\end{equation}
Of course, a doubling measure is pointwise doubling at all points of its support but the converse need not be true, as pointed out for example in \cite[Example 3.3]{Anttila2022}. Relaxing the global doubling condition to the condition that $\mu$ is pointwise doubling at $\mu$-almost every $x$ is enough e.g. for the Vitali covering theorem to hold \cite[Theorem 3.4.4]{HeinonenKoskelaShanmugalingamTyson2015} so much of analysis can be carried out in this larger class of metric measure spaces. This also motivates the following general question: What is the size of the set where a given measure $\mu$ is pointwise doubling?

The goal of this paper is to answer the previous question and conduct a fine analysis of the doubling properties for self-similar measures, where the defining iterated function system satisfies the open set condition (OSC). Under the more restrictive strong separation condition, all self-similar measures are doubling, but under the OSC the situation is more subtle. A pedagogical example is given by the $(p,1-p)$-Bernoulli measure attached to the system $\{x\mapsto x/2,x\mapsto1/2+x/2\}$, which is doubling if and only if $p=1/2$. For a general self-similar iterated function system with OSC, the so-called canonical self-similar measure is always doubling \cite{MauldinUrbanski1996}, but depending on the situation, there could be (1) No other doubling self-similar measures, (2) Infinitely many doubling and infinitely many non-doubling self-similar measures, (3) No non-doubling self-similar measures. Yung \cite{Yung2007} provides a detailed analysis of such different situations along with many examples. 

For a given measure $\mu$, we let
\begin{equation*}
  D(\mu)=\{x\in\spt(\mu)\colon \mu\text{ is doubling at }x\}\,,
\end{equation*}
and let $\dimh X$ denote the Hausdorff dimension of $X$. If a measure $\mu$ is doubling, then $D(\mu)=X$. We are interested in the size of the set $D(\mu)$ for self-similar measures $\mu$ satisfying the OSC in the non-trivial case when $\mu$ is not doubling. In our first theorem,which is an immediate corollary of our main result, we show that in this context, the set $D(\mu)$ has full Hausdorff dimension but zero Hausdorff measure.

\begin{theorem}\label{thm:main-doubling}
  Let $\mu$ be a non-doubling self-similar measure fully supported on a self-similar set $X\subset\mathbb{R}^n$ satisfying the OSC and let $s=\dimh X$. Then
  \begin{enumerate}
    \item $\dimh D(\mu)=s$,\label{thm:main-1}
    \item $\HH^{s}(D(\mu))=0$.\label{thm:main-2}
  \end{enumerate}
\end{theorem}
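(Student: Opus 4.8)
The plan is to read off both statements from the multifractal description of the pointwise Assouad dimension $\dima^{\loc}(\mu,x)$ established in the body of the paper, after two elementary reductions. First, $\mu$ is pointwise doubling at $x$ exactly when $\dima^{\loc}(\mu,x)<\infty$: a bound $\mu(B(x,2r))\le C\mu(B(x,r))$ valid for all $r>0$ iterates to $\mu(B(x,R))\le C(R/r)^{\log_2 C}\mu(B(x,r))$, and the converse is immediate. Hence
\[
  D(\mu)=\{x\in X:\dima^{\loc}(\mu,x)<\infty\}=\bigcup_{k\in\N}\{x\in X:\dima^{\loc}(\mu,x)\le k\}\,.
\]
Second, since $X$ satisfies the OSC, $\HH^s|_X$ is a positive constant times the self-similar (``canonical'') measure $\nu$ with weights $(r_i^s)_i$; this $\nu$ is Ahlfors $s$-regular, hence doubling, and on the coding space $\Sigma=\{1,\dots,m\}^{\N}$ it is the $(r_1^s,\dots,r_m^s)$-Bernoulli measure, which charges every cylinder.

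For part~\eqref{thm:main-1}, the bound $\dimh D(\mu)\le s$ is trivial because $D(\mu)\subseteq X$. For the reverse inequality I would invoke countable stability of Hausdorff dimension, writing $\dimh D(\mu)=\sup_{k\in\N}\dimh\{x:\dima^{\loc}(\mu,x)\le k\}$, and then quote the multifractal formula, by which this supremum equals $s$. Concretely, for every $\eps>0$ one produces a subshift of finite type $\Sigma'\subseteq\Sigma$ with $\dimh\pi(\Sigma')>s-\eps$, where $\pi\colon\Sigma\to X$ is the natural coding map, which forbids beyond some fixed length $L=L(\eps)$ every occurrence of the finitely many configurations responsible for the failure of pointwise doubling; then $\pi(\Sigma')\subseteq D(\mu)$, and as barring longer configurations costs less and less dimension, letting $L\to\infty$ yields $\dimh D(\mu)=s$.

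For part~\eqref{thm:main-2}, since $\HH^s|_X$ is a positive constant times $\nu$ and $D(\mu)\subseteq X$, it suffices to show $\nu(D(\mu))=0$, that is, that $\mu$ is \emph{not} pointwise doubling at $\nu$-almost every $x$; this is the only place where the non-doubling hypothesis enters. Non-doubling should furnish finite words $v,w$ over the alphabet such that, for every $\ell\in\N$, each occurrence of $vw^{\ell}$ in the coding of $x$ beginning at a level $n$ produces scales $r<R\le 2r$ comparable to the diameter of the level-$n$ cylinder containing $x$, at which $\mu(B(x,R))/\mu(B(x,r))\ge c(\ell)$ with $c(\ell)\to\infty$. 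Because $\nu$ is a fully supported Bernoulli measure on $\Sigma$, for each fixed $\ell$ the word $vw^{\ell}$ occurs $\nu$-almost surely infinitely often (Borel--Cantelli along disjoint blocks); intersecting over $\ell\in\N$, for $\nu$-almost every $x$ the word $vw^{\ell}$ appears in the coding of $x$ at arbitrarily large levels, for every $\ell$, whence $\sup_{r>0}\mu(B(x,2r))/\mu(B(x,r))=\infty$ and $x\notin D(\mu)$. Therefore $\nu(D(\mu))=0$, and so $\HH^s(D(\mu))=0$.

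I expect the genuinely hard part, on which both arguments rest, to be the combinatorial description of pointwise doubling supplied by the main theorem: converting the family of geometric ratios $\mu(B(x,2r))/\mu(B(x,r))$ into a quantity governed by the coding of $x$ requires, under the OSC rather than the strong separation condition, keeping track of the boundedly many cylinders of diameter comparable to $r$ that $B(x,r)$ can meet, and then isolating precisely which finite configurations force the ratio to blow up along a sequence of scales --- together with the complementary fact that forbidding all sufficiently long such configurations keeps $\dima^{\loc}(\mu,\cdot)$ uniformly bounded on the resulting subshifts.
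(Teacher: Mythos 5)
Your two top-level reductions are exactly the paper's: part~(1) is read off from the multifractal spectrum of $x\mapsto\dima(\mu,x)$ (the paper simply cites its Theorem~\ref{thm:main-spectrum} via the level set $D_{\alpha(0)}^\A\subset D(\mu)$ of dimension $f(\alpha(0))=\dimh X$), and part~(2) amounts to showing $\nu(D(\mu))=0$ for the canonical Ahlfors-regular self-similar measure $\nu\sim\HH^s|_X$ (the paper's Corollary~\ref{cor:main-2}, which follows from Proposition~\ref{prop:exact-dim-non-doubling}). You also correctly pinpoint where the real work is: translating the geometric ratios $\mu(B(x,R))/\mu(B(x,r))$ into symbolic information under the OSC, which is exactly what Sections~3--4 of the paper are devoted to.

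The gap is in your ``concrete'' construction for part~(1). Forbidding a finite set of configurations in the \emph{coding of $x$} does not yield $\pi(\Sigma')\subseteq D(\mu)$ under the OSC: a ball $B(\pi(\iii),r)$ can meet several cylinders of comparable diameter whose codings are unrelated to $\iii|_n$, and their relative masses are not controlled by the code of $\iii$ alone. Indeed, this is precisely the mechanism that makes some self-similar measures non-doubling under OSC even though they are all doubling under SSC, so no purely ``pattern-avoiding'' subshift can work. The paper's fix is structural: by Schief's theorem one fixes a finite word $\kkk$ with $\varphi_\kkk(X)\subset U$ (the OSC open set), and builds the Moran subsets over the induced alphabet $\Gamma_n=\{\iii\kkk:\iii\in\Sigma_{n-|\kkk|}\}$; periodically inserting $\kkk$ creates geometric buffers so that $\mu(B(\pi(\iii),r))\asymp p_{\iii|_m}$ with $m=m(\iii,r)$, which is what makes $\dima(\mu,\pi(\iii))=\dima(\nu_\pp,\iii)$ on this subsystem (Lemmas~\ref{lemma:osc-ball-measure} and~\ref{lemma:subset-non-doubling}). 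That device, rather than forbidding patterns, is the missing idea. For part~(2), the Borel--Cantelli/ergodicity skeleton is right, but non-doubling does not in general furnish words of the special shape $vw^\ell$: the paper instead applies Yung's theorem to produce, for each $n$, a pair $\iii_n,\jjj_n$ with $p_{\jjj_n}\ge np_{\iii_n}$ and $\varphi_{\jjj_n}(X)$ close to $\varphi_{\iii_n}(X)$, and then forces the word $\iii_n\kkk$ (again the $\kkk$-trick, so that Lemma~\ref{lemma:osc-ball-measure} applies) to occur $\nu$-a.e.\ via Birkhoff's ergodic theorem. With those two modifications the proposal would align fully with the paper's argument.
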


To state our main result, we recall the notion of \emph{pointwise Assouad dimension for measures} which was recently introduced in \cite{Anttila2022}. For a measure $\mu$ supported on a metric space $X$, the pointwise Assouad dimension of $\mu$ at $x\in \spt(\mu)$ is defined by
\begin{align*}
  \dima(\mu,x)=\inf\bigg\{s>0\colon& \exists C>0,\text{ s.t. }\forall 0<r<R,\\
  &\frac{\mu(B(x,R))}{\mu(B(x,r))}\leq C\left(\frac{R}{r}\right)^s\bigg\}\,.
\end{align*}
Let us point out that if $C$ is not allowed to depend on the point $x$, the corresponding infimum gives the (global) Assouad dimension, $\dima\mu$, which is sometimes also referred to as the upper regularity dimension \cite{KaenmakiLehrbackVuorinen2013,Fraser2020}.
A related pointwise notion for sets was recently introduced and studied by Käenmäki and Rutar \cite{KaenmakiRutar2023+}.

The pointwise Assouad dimension at $x$ is finite if and only if the measure is doubling at $x$. Indeed, by modifying the proof of Theorem 2.2 in \cite{Howroyd2019} in the obvious way, it is easy to see that $\dima(\mu,x)$ quantifies the smallest \emph{doubling exponent} of $\mu$ at a given point, that is
\begin{equation*}
    \dima(\mu,x)=\inf_{\gamma>1}\frac{\log C_{\gamma}(x)}{\log \gamma}\,.
  \end{equation*} 
In \cite{Anttila2022} it was shown that many classical fractal measures, such as the self-similar measures we study in this paper, exhibit an exact dimensionality property for the pointwise Assouad dimension, that is $\dima(\mu,x)=\dima \mu$ for $\mu$-almost every $x\in\spt(\mu)$. The purpose of this paper is to take the analysis further and study the non-typical behaviour via multifractal analysis.

Classically, in multifractal analysis one is interested in the size of the level sets of the \emph{pointwise dimension} of a given measure $\mu$, which is defined at $x\in X$ as
\begin{equation}\label{eq:dimloc}
  \dimloc(\mu,x)=\lim_{r\downarrow 0}\frac{\log \mu(B(x,r))}{\log r}\,,
\end{equation}
provided the limit exists; if the limit does not exist, we denote by $\ldimloc(\mu,x)$ and $\udimloc(\mu,x)$ the \emph{lower and upper pointwise dimensions}, which one obtains by replacing the limits in \eqref{eq:dimloc} by the lower and upper limits, respectively. Given a measure $\mu$, we denote the $\alpha$-level set of the pointwise dimension by
\begin{align}\label{eq:Da}
  &D_\alpha^\loc(\mu)=\{x\in\spt(\mu)\colon \dimloc(\mu,x)=\alpha\},
\end{align}
where we may omit $\mu$ from the notation if the measure is clear from the context. The general objective in multifractal analysis is then to determine the \emph{multifractal spectrum}, $\alpha\mapsto\dimh D_\alpha^{\loc}$ for the pointwise dimension map. 
It was first observed by physicists studying turbulence \cite{FrischParisi1985} that for certain well behaved measures, the multifractal spectrum seems to behave in a regular way. 
For self-similar measures under the OSC, the multifractal spectrum is nowadays very well understood (\cite{CawleyMauldin1992, ArbeiterPatzschke1996, Patzschke1997, Falconer1997}); Let $\tau_\mu(q)$ denote the
$L_q$-spectrum of $\mu$.
Then, there are numbers $0<\alpha_{\min}\le\alpha_{\max}<\infty$ such that all values of $\dimloc(\mu,x)$ fall onto the interval $[\alpha_{\min},\alpha_{\max}]$ and for $\alpha_{\min}\le \alpha\le \alpha_{\max}$, the value of $\dimh D_\alpha^{\loc}$ equals the Legendre transform
\begin{equation}\label{eq:Legendre}
  f(\alpha)\coloneqq\inf_{q\in\R}\alpha q+\tau_{\mu}(q).
\end{equation}
There is also a well known analytic expression for $f(\alpha)$, which we recall in Section \ref{sec:classical} (see \eqref{eq:alpha-q-correspondence}). It is implicitly required in the above result that the limit in \eqref{eq:dimloc} exists. However, it is also well known \cite{CawleyMauldin1992} that the Hausdorff dimension of the $\alpha$-sublevel set
\begin{equation*}
  U_{\alpha}^{\loc}(\mu)\coloneqq \{x\in\spt(\mu)\colon \udimloc(\mu,x)\leq\alpha\}
\end{equation*}
coincides with the value $\barf(\alpha)=\max_{\beta\leq \alpha}f(\alpha)$, for all $\alpha_{\min}\le \alpha\le \alpha_{\max}$.

From the point of view of pointwise Assouad dimension, one is led to ask what are the sizes of the level sets
\begin{align*}
  &D_{\alpha}^\A(\mu)=\{x\in \spt(\mu)\colon \dima(\mu,x)=\alpha\},
\end{align*}
or of the sub-level sets
\begin{align*}
  &U_{\alpha}^\A(\mu)=\{x\in \spt(\mu)\colon \dima(\mu,x)\leq\alpha\},
\end{align*}
of $\dima$ for all values of $\alpha$. 
In our main result, we completely solve this question for self-similar measures under the OSC.
\begin{theorem}\label{thm:main-spectrum}
  Let $\mu$ be a self-similar measure satisfying the OSC and let $X$ be the associated self-similar set. Then, for all $\alpha\in[\alpha_{\min},\alpha_{\max})$,
  \begin{equation*}
    \dimh D_{\alpha}^\A(\mu)=\dimh U_{\alpha}^\A(\mu)
    =\barf(\alpha).
  \end{equation*}
  Moreover,
  \begin{equation*}
    \dimh D_{\dima\mu}^\A(\mu)=\dimh U_{\dima\mu}^\A(\mu)
    =\dimh X,
  \end{equation*}
  and for any $\alpha\not\in [\alpha_{\min},\alpha_{\max}]\cup\{\dima\mu\}$, we have $D_\alpha^\A(\mu)=\emptyset$.
\end{theorem}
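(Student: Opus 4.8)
The plan is to analyze the pointwise Assouad dimension $\dima(\mu,x)$ by relating it to the local behaviour of $\mu$ captured by the $L_q$-spectrum, and then transfer the known multifractal results for the pointwise dimension to the Assouad setting. The key structural observation is that, for self-similar measures under the OSC, $\dima(\mu,x)$ should equal the supremum over all "local scaling exponents" visible near $x$; more precisely, using symbolic dynamics, if $x$ has a coding $\xii = \xii_1\xii_2\cdots$, then the ratios $\mu(B(x,R))/\mu(B(x,r))$ at dyadic-type scales are controlled by the measures of cylinders $[\xii_1\cdots\xii_k]$, and the worst-case exponent is $\limsup$ over pairs $k < n$ of $(\log\mu[\xii_1\cdots\xii_n] - \log\mu[\xii_1\cdots\xii_k])/(\log(r_{\xii_k}) - \log(r_{\xii_n}))$, where $r_\iii$ denotes the contraction ratio of the map indexed by the word $\iii$. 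The first step is therefore to establish, via the OSC and a bounded-overlap argument (the standard reduction of $\mu$-measure of balls to $\mu$-measure of comparable cylinders), a clean symbolic formula of this type for $\dima(\mu,x)$, valid up to uniform multiplicative constants that do not affect the infimum defining $\dima$.

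Second, I would identify the achievable range of $\dima(\mu,x)$. Writing $p_i$ for the probabilities and $r_i$ for the contraction ratios, each single symbol $i$ contributes the exponent $\log p_i / \log r_i$, and finite concatenations of symbols give weighted averages of these; taking $\limsup$ of the worst block, the extreme achievable values are $\alpha_{\min} = \min_i \log p_i/\log r_i$ and $\alpha_{\max} = \max_i \log p_i/\log r_i$ when one block dominates asymptotically, while if the "average" behaviour dominates one recovers $\dima\mu$ (which, by the exact-dimensionality result from \cite{Anttila2022} recalled in the excerpt, is the $\mu$-a.e.\ value and equals the global Assouad dimension of $\mu$). This pins down that $D_\alpha^\A(\mu) = \emptyset$ unless $\alpha \in [\alpha_{\min},\alpha_{\max}] \cup \{\dima\mu\}$: outside this set no sequence of symbolic blocks can produce the exponent $\alpha$, either because it exceeds what any finite block can do, or falls below $\alpha_{\min}$, or lies in a gap between $\alpha_{\max}$ and $\dima\mu$ that no $\limsup$ can land in.

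Third, for $\alpha \in [\alpha_{\min},\alpha_{\max})$ I would prove the two-sided bound $\dimh D_\alpha^\A(\mu) = \dimh U_\alpha^\A(\mu) = \barf(\alpha)$. The upper bound $\dimh U_\alpha^\A(\mu) \le \barf(\alpha)$ should follow because $\dima(\mu,x) \le \alpha$ forces, in particular, a constraint on the lower local dimension along the coding of $x$ — roughly, $\dima(\mu,x) < \infty$ with the relevant sup bounded by $\alpha$ implies $\udimloc(\mu,x) \le \alpha$ (comparing the block from scale $1$ down to scale $r$), so $U_\alpha^\A(\mu) \subseteq U_\alpha^\loc(\mu)$, and the latter has dimension $\barf(\alpha)$ by \cite{CawleyMauldin1992}. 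For the lower bound $\dimh D_\alpha^\A(\mu) \ge \barf(\alpha)$, I would construct a Moran-type subset: take a point whose coding is, on most of its length, governed by the optimal Gibbs/Bernoulli measure realizing $f(\beta)$ for the $\beta \le \alpha$ that attains $\barf(\alpha) = f(\beta)$, and then insert, at a sparse (e.g.\ super-exponentially spaced) sequence of levels, long controlled blocks of the single symbol achieving exponent $\alpha$ (or an average of symbols achieving exactly $\alpha$). Sparseness ensures these insertions do not lower the Hausdorff dimension of the resulting set below $f(\beta) = \barf(\alpha)$ — here a mass distribution principle with the corresponding measure does the job — while the inserted blocks force $\limsup$ of the worst symbolic ratio to equal exactly $\alpha$, so the whole set lies in $D_\alpha^\A(\mu)$. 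The case $\alpha = \dima\mu$ is handled separately: one uses the $\mu$-a.e.\ statement to get $\mu(D_{\dima\mu}^\A(\mu)) > 0$ hence full dimension, combined with the upper bound $\dimh X$ which is automatic.

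The main obstacle I anticipate is the lower-bound construction for $D_\alpha^\A(\mu)$ when $\alpha$ is in the interior $[\alpha_{\min},\alpha_{\max})$ but also needs to be realized as a genuine $\limsup$ and not merely a $\liminf$ or an average: one must simultaneously (i) keep the point in the level set $D_\alpha^\A$ and not just $U_\alpha^\A$, which requires the worst-scale ratio to actually return to $\alpha$ infinitely often and not overshoot, and (ii) preserve the dimension $\barf(\alpha)$, which can be strictly larger than $f(\alpha)$ precisely when the optimizing $\beta$ differs from $\alpha$ — so the "bulk" coding and the "spike" blocks correspond to two different exponents and one must verify the spikes are rare enough (in density) to be dimensionally negligible yet frequent enough (in count) to control the $\limsup$. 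Making the bounded-overlap constants from the OSC uniform enough that the symbolic formula for $\dima(\mu,x)$ is exact rather than just coarse is a secondary technical point that needs care, especially near the transition scales inside each inserted block.
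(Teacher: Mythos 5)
Your proposal contains a genuine gap at its foundation. Step 1 asserts that, under the OSC, $\dima(\mu,x)$ is given by a symbolic formula along the coding of $x$, ``valid up to uniform multiplicative constants.'' This is false in general, and the failure is the entire source of difficulty in the paper. Via the bounded-overlap/Schief argument one gets the two inequalities $p_{\iii|_m}\le\mu(B(x,r))\le M\max_{\jjj\in\Sigma(x,r)}p_\jjj$ (with the usual scale-matching), but the upper and lower bounds are \emph{not} comparable unless one controls the masses of the geometrically adjacent cylinders $\jjj\in\Sigma(x,r)$ other than the prefix of $\iii$. Those adjacent cylinders may have codings that are symbolically very far from $\iii$ and may carry vastly more mass; that is precisely what makes OSC self-similar measures non-doubling. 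As a consequence, for a generic (indeed, $\mu$-full-measure) set of $x$ one has $\dima(\mu,x)=\infty$ even though the purely symbolic quantity $\limsup_{\aaa\sqsubset\iii}\frac{\log p_\aaa}{\log r_\aaa}$ always lies in $[\alpha_{\min},\alpha_{\max}]$. So your claimed symbolic formula would give $\dima(\mu,x)\le\alpha_{\max}$ for every $x$ — contradicting the non-doubling case, where $\dima\mu=\infty$ appears as an isolated point in the spectrum.

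This has two downstream consequences. First, the emptiness of $D_\alpha^\A$ for $\alpha\in(\alpha_{\max},\dima\mu)$ is not the soft combinatorial statement you describe (``no $\limsup$ can land in the gap''); it is the hardest part of the paper. The actual argument (Theorem~\ref{thm:doubling-iff-leq-alpha_max} and the geometric Lemma~\ref{lemma:case-study}) shows that \emph{if} $\mu$ is doubling at $x$ then the mass in adjacent cylinders can be transferred across exponentially separated scales, forcing $\dima(\mu,x)\le\alpha_{\max}+\varepsilon$ for every $\varepsilon>0$; if $\mu$ is not doubling at $x$ then $\dima(\mu,x)=\infty$ outright. Second, your Moran construction for the lower bound cannot be carried out over arbitrary codings: to make the symbolic computation of $\dima$ match the geometric one, the paper restricts to words that contain the separating block $\kkk$ (with $\varphi_\kkk(X)\subset U$) at bounded gaps — this is the abundant sequence $\Gamma_n=\{\iii\kkk:\iii\in\Sigma_{n-|\kkk|}\}$ and Lemma~\ref{lemma:subset-non-doubling}. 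Without that restriction, your Moran set could contain points where adjacent-cylinder mass transfer blows up the Assouad dimension beyond $\alpha$, so you would not have $\Omega\subset D_\alpha^\A(\mu)$. The upper bound $U_\alpha^\A\subseteq U_\alpha^\loc$ and the treatment of $\alpha=\dima\mu$ via the a.e.\ statement are correct and coincide with the paper, but note that the a.e.\ statement itself must be re-proved in the non-doubling case (Proposition~\ref{prop:exact-dim-non-doubling}), which you do not address.
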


If $\mu$ is doubling, then by \cite[Theorem 2.4]{FraserHowroyd2019}, $\dima\mu=\alpha_{\max}$ and therefore our theorem tells that the Hausdorff spectrum of the pointwise Assouad dimension fully coincides with the upper spectrum of the pointwise dimension. Moreover, even if the measure is not doubling, the spectra coincide outside of the isolated point at $+\infty$ and possibly the point $\alpha_{\max}$ where many types of behaviour is possible, see Remark \ref{rem:alpha_max}. In spite of the similarity in the behaviour of the multifractal spectra, the method for obtaining the result for the pointwise Assouad dimension is quite different from the classical approach for the pointwise dimension.

Let us outline the structure of the paper. In Section \ref{sec:preliminaries} we recall our basic definitions and notations and outline the classical approach for obtaining the multifractal spectrum for self-similar measures. We also briefly discuss why the classical approach does not work for the pointwise Assouad dimension. In Section \ref{sec:symbolic} we develop methods for determining the multifractal spectrum for the pointwise Assouad dimension in the symbolic setting, and in Section \ref{sec:OSC}, we apply these methods to study self-similar measures under the OSC and prove Theorems \ref{thm:main-doubling} and \ref{thm:main-spectrum}.

\section{Preliminaries}\label{sec:preliminaries}
From now on, a measure will refer to an outer regular Borel measure. In most cases, these will be finite or even probability measures. We will denote by $C$ or $C(\cdots)$ various constants where, whenever needed, the quantities inside the parentheses explain the dependency of $C$ on other parameters. For $A\subset X$, where $X$ is a metric space, $\dimh A$ denotes the Hausdorff dimension of $A$. For a measure $\mu$, $\dimh\mu$ is the infimal Hausdorff dimension of sets of positive $\mu$ measure. We denote closed balls in $X$ with centre $x$ and radius $r>0$ by $B(x,r)$.

\subsection{Self-similar sets and measures}
Let $m\in \N$ and $\Lambda = \{1,\ldots,m\}$. A \emph{self-similar iterated function system (IFS)} is a collection $(\varphi_i)_{i\in\Lambda}$ of contractive similarities on $\R^d$. We denote the contraction ratio of each $\varphi_i$ by $r_i$.  Let $X$ denote the \emph{self-similar set} associated to $(\varphi_i)_{i}$. This is the unique compact and non-empty $X\subset\R^d$ satisfying
\begin{equation*}
X=\bigcup_{i\in\Lambda}\varphi_i(X).
\end{equation*}
By rescaling $X$ if necessary, we assume that $\diam(X)=1$. As a metric space, $X$ inherits the Euclidean metric from the ambient Euclidean space $\R^d$.
We say that the self-similar set $X$ satisfies the \emph{strong separation condition (SSC)} if $\varphi_i(X)\cap \varphi_j(X)=\emptyset$, for all $i\ne j$ and we say that $X$ satisfies the \emph{open set condition (OSC)} if there exists a non-empty bounded open set $U\subset \R^d$ satisfying $\varphi_i(U)\subset U$, for all $i\in\Lambda$ and $\varphi_i(U)\cap\varphi_j(U)=\emptyset$, for all $i\ne j$. 

We denote the (open) simplex of probability vectors in $\R^m$ by
\begin{equation*}
  \PP = \left\{\pp\coloneqq (p_1,\ldots,p_m)\in\R^m\colon \sum_{i=1}^mp_i=1, \text{ and } p_i>0,\forall i=1,\ldots,m\right\}.
\end{equation*}
For every $\pp\in\PP$, there is a unique Borel probability measure $\mu_\pp$ fully supported on $X$, which satisfies
\begin{equation}\label{eq:ss_meas_def}
\mu_\pp = \sum_{i\in\Lambda} p_i\varphi_i\mu_\pp\,,
\end{equation}
where $f\mu$ denotes the push-forward measure $\mu\circ f^{-1}$, for any measurable function $f$. 
This measure is called the \emph{self-similar measure} associated to the IFS $(\varphi_1,\ldots,\varphi_m)$ and the weight vector $\pp$.

\subsection{Symbolic spaces and Bernoulli measures}\label{sec:sym_B}
Let $\Lambda$ be a finite index set. The \emph{symbolic space} generated by the alphabet $\Lambda$ is denoted by $\Sigma(\Lambda)\coloneqq\Lambda^{\N}$.
If the alphabet is clear from the context, we suppress $\Lambda$ from the notation. The elements of $\Sigma$ are called \emph{(infinite) words}. For a given $n\in\N$, the set of \emph{words of length $n$} is denoted by $\Sigma_n\coloneqq\Lambda^n$ and the set of all \emph{finite words} by $\Sigma_*=\bigcup_{n\in\N} \Sigma_n$. The following notation is used both for the infinite words $\iii=(i_1,i_2,\ldots)\in\Sigma$ as well as for finite words $\iii=(i_1,\ldots,i_k)\in\Sigma_*$, when appropriate. Additionally, we reserve the letters $\aaa$ and $\bbb$ exclusively for finite words to avoid confusion in certain cases. By $|\iii|\in\N\cup\{\infty\}$ we denote the length of the word $\iii$. The projection of $\iii$ to the first $n\le|\iii|$ letters is $\iii|_n=(i_1,\ldots,i_n)$. Given two words $\iii$ and $\jjj$, $\iii\wedge\jjj$ is their longest common prefix. We write $\jjj\sqsubset \iii$ if $\jjj\in\Sigma_*$ is a \emph{subword} of $\iii\in\Sigma$. That is, for some $n\in\N$, $\sigma^n\iii|_{|\jjj|}=\jjj$, where $\sigma\colon \Sigma\to\Sigma$ is the left shift on $\Sigma$ defined by $\sigma(i_1,i_2,i_3,\ldots)=(i_2,i_3,\ldots)$. For a finite word $\iii$, we also let $\iii^-$ denote the word obtained from $\iii$ by deleting the last symbol.

Self-similar sets and measures admit a natural symbolic coding. Suppose $(\varphi_i)_{i\in\Lambda}$ is a self-similar IFS as in the previous subsection and let $\Lambda = \{1,\ldots,m\}$. There is a natural coding map $\pi$ between the symbolic space $\Sigma$ and the self-similar set $X$, defined by
\begin{equation}\label{eq:projection}
\pi(\iii)=\lim_{n\to \infty} \varphi_{\iii|_n}(0).
\end{equation}
Under the SSC, the map $\pi$ is a bijection but $\pi$ may fail to be injective if only the OSC is assumed. We next define a metric on $\Sigma$, which allows us to capture some geometric properties of $X$ on the symbolic side. To that end, given $\iii\in\Sigma_n$, we denote
\begin{align*}
  &r_{\iii}=r_{i_1}r_{i_2}\cdots r_{i_n},\\
  &p_{\iii}=p_{i_1}p_{i_2}\cdots p_{i_n},\\
  &\varphi_{\iii}=\varphi_{i_1}\circ\varphi_{i_2}\circ\ldots\circ \varphi_{i_n}.
\end{align*}
Let
\begin{equation*}
  \rho(\iii,\jjj)=r_{\iii\wedge\jjj},
\end{equation*}
It is easy to see that $\rho$ is a metric (in fact an ultrametric) on $\Sigma$ and under the metric $\rho$ the projection map $\pi\colon \Sigma\to X$ is Lipschitz and if $X$ satisfies the SSC it is even bi-Lipschitz.

Given $\iii\in\Sigma_*$, we denote the corresponding cylinder set by $[\iii]\coloneqq \{\jjj\in\Sigma\colon \jjj|_{|\iii|}=\iii\}$. These cylinder sets are clopen and form a basis for the topology induced by $\rho$. Indeed, for any $\iii\in\Sigma$ and $r>0$, we have
\begin{equation}\label{eq:balls-are-cylinders}
  B(\iii,r)=[\iii|_n],
\end{equation}
where 
$n=n(\iii,r)\in\N$ is the unique natural number satisfying $r_{\iii|_{n}}\le r < r_{\iii|_{n-1}}$.

Finally, we will consider Bernoulli measures on $\Sigma$. For $\pp\in\PP$, we define a pre-measure $\nu_\pp$ on the cylinder sets in $\Sigma$ by letting
\begin{equation*}
  \nu_\pp([\iii])=p_{\iii},
\end{equation*}
for all $\iii\in\Sigma_*$ and extend it to a unique Borel probability measure on $\Sigma$. The measure $\nu_\pp$ is called the \emph{Bernoulli measure} associated to the probability vector $\pp$. Under the OSC it holds that $\mu_\pp=\pi\nu_\pp$.

\subsection{Multifractal analysis of pointwise dimensions}\label{sec:classical}
Let us recall how to obtain an analytic expression for the multifractal spectrum of the pointwise dimension in the simplest nontrivial setting i.e. for Bernoulli measures on the symbolic space. Let $\nu$ and $\rho$ along with the $r_i$ and $p_i$, $i\in\Lambda$, be as in Section \ref{sec:sym_B} above. For any $q\in\R$, we define $\tau(q)$ as the unique real number satisfying
\begin{equation*}
  \sum_{i=1}^mp_i^qr_i^{\tau(q)}=1\,.
\end{equation*}
The function $\tau\colon\R\to\R$ is strictly decreasing, continuous and strictly convex on the interval $[\alpha_{\min},\alpha_{\max}]$, where $\alpha_{\min}=\min_{i\in\Lambda}\tfrac{\log p_i}{\log r_i}$, $\alpha_{\max}=\max_{i\in\Lambda}\tfrac{\log p_i}{\log r_i}$. To avoid degenerate cases, we assume that $\alpha_{\max}>\alpha_{\min}$. Let $f(\alpha)=\inf_{q\in\R}\alpha q+\tau(q)$ denote the Legendre transform of $\tau$. By the analytic implicit function theorem, for each $\alpha\in (\alpha_{\min},\alpha_{\max})$ there exists a unique $q=q(\alpha)$, such that 
\begin{equation}\label{eq:alpha-q-correspondence}
  f(\alpha)=\alpha q+\tau(q)\,,
\end{equation}
and it follows from the convexity of $\tau$ that $\alpha=\alpha(q)$, as defined via \eqref{eq:alpha-q-correspondence} is a decreasing function of $q\in\R$. Using implicit differentiation leads to the expression 
\begin{equation*}
  \alpha=\alpha(q)
  =\frac{\sum_{i\in\Lambda}p_i^qr_i^{\tau(q)}\log p_i}{\sum_{i\in\Lambda}p_i^qr_i^{\tau(q)}\log r_i}.
\end{equation*}
Simple calculations show that $\alpha(q)\to \alpha_{\min}$ as $q\to \infty$ and $\alpha(q)\to \alpha_{\max}$ as $q\to -\infty$. Combining the above, for $q\in\R$, $\alpha=\alpha(q)$, we have
\begin{equation}\label{eq:mf_expl}
  f(\alpha)=q\frac{\sum_{i\in\Lambda}p_i^qr_i^{\tau(q)}\log p_i}{\sum_{i\in\Lambda}p_i^qr_i^{\tau(q)}\log r_i} + \tau(q)=\frac{\sum_{i\in\Lambda}p_i^qr_i^{\tau(q)}\log p_i^qr_i^{\tau(q)}}{\sum_{i\in\Lambda}p_i^qr_i^{\tau(q)}\log r_i}\,.
\end{equation}
Note that $f(\alpha)$ is maximised when $q=0$ and that $f(\alpha(0))=\tau(0)=\dimh \Sigma$.  Moreover, $f(\alpha)$ is strictly increasing on $(\alpha_{\min},\alpha(0)]$ and strictly decreasing on $[\alpha(0),\alpha_{\max})$ so we may set $f(\alpha_{\min})=\lim_{\alpha\to\alpha_{\min}}f(\alpha)$ and $f(\alpha_{\max})=\lim_{\alpha\to\alpha_{\max}}f(\alpha)$. Let us set $\overline{f}(\alpha)=\max_{\beta\leq\alpha}f(\beta)$, or more explicitly,
\begin{equation}
  \overline{f}(\alpha)=\begin{cases}
      f(\alpha),&\text{ if }\alpha_{\min}\le \alpha\leq \alpha(0)\\
      \dimh\Sigma,&\text{ if }\alpha_{\max}\ge \alpha> \alpha(0)
  \end{cases}.
\end{equation}
The following is a well known theorem, which establishes the multifractal formalism in the symbolic setting, see e.g. \cite{Falconer1997}.
In addition to the level sets $D_{\alpha}^\loc(\nu)$, recall \eqref{eq:Da}, we consider the sublevel sets $U_\alpha^\loc(\nu)=\{x\in\spt(\nu)\colon \udimloc(\nu,x)\leq\alpha\}$.
\begin{theorem}\label{thm:classical-formalism}
  For any $\alpha\in[\alpha_{\min},\alpha_{\max}]$, we have
  \begin{equation*}
    \dimh D_{\alpha}^\loc(\nu)=f(\alpha)
  \end{equation*}
  and
    \begin{equation*}
    \dimh U_{\alpha}^\loc(\nu)=\barf(\alpha)\,.
  \end{equation*}
  Furthermore, $D_{\alpha}^\loc(\nu)=\emptyset$, for $\alpha\not\in[\alpha_{\min},\alpha_{\max}]$.
\end{theorem}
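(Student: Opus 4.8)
The plan is to transfer everything to Birkhoff-type averages, obtain the lower bounds by exhibiting auxiliary Bernoulli measures whose generic local dimension realises the prescribed $\alpha$, and obtain the matching upper bounds from a covering estimate that builds the Legendre duality directly into the weights. Concretely, by \eqref{eq:balls-are-cylinders} and $\nu([\iii|_n])=p_{\iii|_n}$ one has, for every $\iii=(i_1,i_2,\dots)\in\Sigma$,
\[
  \dimloc(\nu,\iii)=\lim_{n\to\infty}\frac{\log p_{\iii|_n}}{\log r_{\iii|_n}}=\lim_{n\to\infty}\frac{\sum_{k=1}^{n}\log p_{i_k}}{\sum_{k=1}^{n}\log r_{i_k}},
\]
and similarly $\ldimloc,\udimloc$ are the corresponding $\liminf,\limsup$; replacing an arbitrary radius $r$ by the cylinder length $r_{\iii|_n}$ is harmless since $\log r_{\iii|_{n-1}}/\log r_{\iii|_n}\to1$. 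As the last quotient is a convex combination of the numbers $\log p_i/\log r_i$ with strictly positive weights $\log r_{i_k}/\sum_j\log r_{i_j}$, it always lies in $[\alpha_{\min},\alpha_{\max}]$; in particular $D_\alpha^\loc(\nu)=\emptyset$ for $\alpha\notin[\alpha_{\min},\alpha_{\max}]$ and $\ldimloc(\nu,\cdot)\ge\alpha_{\min}$ everywhere.

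\emph{Lower bounds.} Fix $\alpha\in(\alpha_{\min},\alpha_{\max})$, let $q=q(\alpha)$ be as in \eqref{eq:alpha-q-correspondence}, and put $q_i=p_i^{q}r_i^{\tau(q)}$, so $\qq=(q_1,\dots,q_m)\in\PP$ by the defining equation of $\tau$. By the strong law of large numbers, for $\nu_\qq$-a.e.\ $\iii$ one has $n^{-1}\log p_{\iii|_n}\to\sum_i q_i\log p_i$, $n^{-1}\log r_{\iii|_n}\to\sum_i q_i\log r_i$ and $n^{-1}\log q_{\iii|_n}\to\sum_i q_i\log q_i$; hence $\dimloc(\nu,\iii)=\alpha$ (by the explicit formula for $\alpha(q)$) and, using $\log q_i=q\log p_i+\tau(q)\log r_i$, also $\dimloc(\nu_\qq,\iii)=q\alpha+\tau(q)=f(\alpha)$. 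Thus $\nu_\qq(D_\alpha^\loc(\nu))=1$ and $\nu_\qq$ is exact dimensional of dimension $f(\alpha)$, so the mass distribution principle (Billingsley's lemma) gives $\dimh D_\alpha^\loc(\nu)\ge f(\alpha)$; since $D_\alpha^\loc\subseteq U_\alpha^\loc$ and, when $\alpha\ge\alpha(0)$, also $D_{\alpha(0)}^\loc\subseteq U_\alpha^\loc$, this yields $\dimh U_\alpha^\loc(\nu)\ge\barf(\alpha)$. The endpoints are treated by restricting to the sub-IFS indexed by $I=\{i:\log p_i/\log r_i=\alpha_{\min}\}$: on the corresponding symbolic subspace the above quotient is identically $\alpha_{\min}$, so $\dimh D_{\alpha_{\min}}^\loc\ge s_I$ with $\sum_{i\in I}r_i^{s_I}=1$, and $s_I=f(\alpha_{\min})$ by a direct computation (letting $q\to\infty$ in \eqref{eq:mf_expl}); $\alpha_{\max}$ is symmetric.

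\emph{Upper bounds.} For $\delta>0$ use the Moran net $\Sigma^\delta=\{\jjj\in\Sigma_*: r_\jjj\le\delta<r_{\jjj^-}\}$, a finite antichain with $\Sigma=\bigsqcup_{\jjj\in\Sigma^\delta}[\jjj]$, $\diam[\jjj]\le r_\jjj\le\delta$, and $\min_{\jjj\in\Sigma^\delta}|\jjj|\to\infty$ as $\delta\to0$. Since $\jjj\mapsto p_\jjj^{q}r_\jjj^{\tau(q)}$ is multiplicative and $\sum_i p_i^{q}r_i^{\tau(q)}=1$, we get $\sum_{\jjj\in\Sigma^\delta}p_\jjj^{q}r_\jjj^{\tau(q)}=1$ for every $\delta$. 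Take $\alpha\le\alpha(0)$, so $q=q(\alpha)\ge0$, fix $s>f(\alpha)=\barf(\alpha)$, and choose $\eps>0$ with $\theta:=s-f(\alpha)-q\eps>0$. As $\udimloc(\nu,\iii)\le\alpha$ forces $p_{\iii|_n}\ge r_{\iii|_n}^{\alpha+\eps}$ eventually, $U_\alpha^\loc\subseteq\bigcup_N U_{\alpha,N}$ with $U_{\alpha,N}=\{\iii:p_{\iii|_n}\ge r_{\iii|_n}^{\alpha+\eps}\ \forall n\ge N\}$; for $\delta$ small, every $\jjj\in\Sigma^\delta$ meeting $U_{\alpha,N}$ has $p_\jjj\ge r_\jjj^{\alpha+\eps}$, whence
\[
  r_\jjj^{s}\le p_\jjj^{q}r_\jjj^{\tau(q)}\cdot r_\jjj^{\,s-q(\alpha+\eps)-\tau(q)}=p_\jjj^{q}r_\jjj^{\tau(q)}\cdot r_\jjj^{\theta}\le\delta^{\theta}\,p_\jjj^{q}r_\jjj^{\tau(q)}.
\]
Summing over these $\jjj$ bounds $\HH^{s}_{\delta}(U_{\alpha,N})$ by $\delta^{\theta}\to0$, so $\dimh U_\alpha^\loc\le f(\alpha)$ and hence $\dimh D_\alpha^\loc\le f(\alpha)$. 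For $\alpha\ge\alpha(0)$ one has $\dimh U_\alpha^\loc\le\dimh\Sigma=\tau(0)=\barf(\alpha)$ trivially, and $\dimh D_\alpha^\loc\le f(\alpha)$ follows by the same estimate applied with $q=q(\alpha)\le0$ to the set $\{\iii: p_{\iii|_n}\le r_{\iii|_n}^{\alpha-\eps}\text{ eventually}\}\supseteq D_\alpha^\loc$, which again produces a factor $\delta^{\theta}p_\jjj^{q}r_\jjj^{\tau(q)}$ with $\theta>0$; the endpoint cases then follow by monotonicity in $\alpha$ and continuity of $f$.

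The step I expect to be the main obstacle is making the Legendre duality surface at the right places: selecting the auxiliary vector $\qq(\alpha)$ and verifying the algebraic identity that converts $\dimloc(\nu_\qq,\cdot)$ into $f(\alpha)$ for the lower bound, and, dually, recognising that the correct covering weight is precisely $p_\jjj^{q(\alpha)}r_\jjj^{\tau(q(\alpha))}$ for the upper bound. The other delicate point is the endpoint analysis, where $q(\alpha)$ degenerates to $\pm\infty$ and one must argue separately through the extremal sub-IFS together with continuity of $f$.
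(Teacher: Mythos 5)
Your argument is correct and follows the same classical Legendre-duality route that the paper sketches: introduce the auxiliary Bernoulli measure with weights $(p_i^{q}r_i^{\tau(q)})_i$, verify that it sits on $D_\alpha^{\loc}(\nu)$ and has pointwise dimension $f(\alpha)$ there, and conclude by a Billingsley/mass-distribution argument. The paper only records this construction and refers to \cite{CawleyMauldin1992, Falconer1997} for the remaining details, so your covering estimate over the net $\Sigma^\delta$ with weights $p_\jjj^{q}r_\jjj^{\tau(q)}$ is a correct and standard way of supplying the upper bound (equivalent in content to the deterministic claim $\dimloc(\nu_\alpha,\iii)=f(\alpha)$ for all $\iii\in D_\alpha^{\loc}(\nu)$ that the paper's sketch uses).
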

 
This, now standard, theorem holds also for self-similar measures under the OSC \cite{CawleyMauldin1992,ArbeiterPatzschke1996} but the above symbolic version is enough for our considerations.
Let us briefly recall the main idea of the proof; for more details, see e.g. \cite{CawleyMauldin1992, Falconer1997}. For a given $\alpha\in[\alpha_{\min},\alpha_{\max}]$ a new Bernoulli measure $\nu_\alpha$ is generated by the probability vector $(p_1^q r_1^{\tau(q)},\ldots,p_m^q r_m^{\tau(q)})$, where $q=q(\alpha)$ is given by \eqref{eq:alpha-q-correspondence}. Then, one shows that $\nu_{\alpha}(D_{\alpha}^{\loc}(\nu))=1$ and further $\dimloc(\nu_\alpha,\iii)=f(\alpha)$ for all $\iii\in D_{\alpha}^{\loc}(\nu)$.   In the context of the pointwise Assouad dimension the above method is doomed to fail due to the following theorem. 
\begin{theorem}\label{thm:exact-dim}
  Let $\mu$ be a doubling self-similar measure supported on a self-similar set $X$ satisfying the OSC, and let $\nu$ be an ergodic measure on $X$. Then
  \begin{equation*}
    \dima(\mu,x)=\dima\mu=\alpha_{\max},
  \end{equation*}
  for $\nu$-almost every $x\in X$.
\end{theorem}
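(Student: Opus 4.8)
The plan is to prove $\dima(\mu,x)\ge\alpha_{\max}$ for $\nu$-a.e.\ $x$ by exhibiting, at such points, pairs of small scales at which the measure of balls decays essentially like $r^{\alpha_{\max}}$; the matching upper bound is free, since at every $x$ one has $\dima(\mu,x)\le\dima\mu$ (a constant witnessing the definition of $\dima\mu$ uniformly in the centre witnesses it at $x$) and $\dima\mu=\alpha_{\max}$ by \cite[Theorem 2.4]{FraserHowroyd2019}. I would pass to the symbolic model: write $\mu=\pi\nu_\pp$, let $\widetilde\nu$ be the $\sigma$-invariant ergodic lift of $\nu$ (so $\nu=\pi\widetilde\nu$), discard the $\nu$-null set of points with more than one code, and prove $\dima(\mu,\pi\iii)\ge\alpha_{\max}$ for $\widetilde\nu$-a.e.\ $\iii$. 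One should keep in mind that $\dima(\mu,x)$ genuinely exceeds the upper local dimension here: for $\nu$-a.e.\ $x$, $\udimloc(\mu,x)$ equals the a.e.\ pointwise dimension $\int\log p_{i_1}\dd\widetilde\nu\big/\int\log r_{i_1}\dd\widetilde\nu$, which is a convex combination of the ratios $\tfrac{\log p_i}{\log r_i}$ and hence $<\alpha_{\max}$ in general — so the surplus must come from comparing \emph{two} small scales, not from shrinking $r$ with $R$ fixed.

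\emph{The engine.} Fix $\ell\in\Lambda$ with $\tfrac{\log p_\ell}{\log r_\ell}=\alpha_{\max}$, i.e.\ $p_\ell=r_\ell^{\alpha_{\max}}$. Suppose the code $\iii$ of $x$ contains, for every $L\in\N$, a block $i_{k+1}=\dots=i_{k+L}=\ell$ where $k=k(L)$. Put $R=r_{\iii|_k}$ and $r=r_{\iii|_{k+L}}=R\,r_\ell^{L}$, so $R/r=r_\ell^{-L}\to\infty$ as $L\to\infty$. Since $\varphi_{\iii|_k}(X)$ has diameter $R$ and contains $x$, we get $\mu(B(x,R))\ge\mu(\varphi_{\iii|_k}(X))\ge\nu_\pp([\iii|_k])=p_{\iii|_k}$. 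If in addition $\mu(B(x,r))\le C\,p_{\iii|_{k+L}}=C\,p_{\iii|_k}p_\ell^{L}$ with $C$ depending only on the IFS — under the SSC this is the routine ``$B(x,r)$ sits inside a cylinder of bounded level loss'' estimate, and under the OSC it is the real point (see below) — then
\[
\frac{\mu(B(x,R))}{\mu(B(x,r))}\ \ge\ \frac1C\,p_\ell^{-L}\ =\ \frac1C\bigl(r_\ell^{-L}\bigr)^{\alpha_{\max}}\ =\ \frac1C\Bigl(\tfrac Rr\Bigr)^{\alpha_{\max}}.
\]
Because $R/r\to\infty$, for every $s<\alpha_{\max}$ and every constant $C'$ we can take $L$ so large that the right-hand side exceeds $C'(R/r)^{s}$; thus no $s<\alpha_{\max}$ is admissible in the definition of $\dima(\mu,x)$, i.e.\ $\dima(\mu,x)\ge\alpha_{\max}$.

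\emph{The combinatorial input, and the obstacle.} That $\widetilde\nu$-a.e.\ $\iii$ contains arbitrarily long runs of $\ell$ is soft: for any Borel $A$ with $\widetilde\nu(A)>0$, the set $\{\iii:\sigma^{n}\iii\in A\text{ for infinitely many }n\}$ is $\sigma$-invariant of positive measure, hence of full measure by ergodicity, and applying this to the clopen cylinders $A=[\ell^{L}]$ it suffices that $\widetilde\nu([\ell^{L}])>0$ for all $L$ — a consequence of the standing hypotheses on $\nu$ (for instance full support: $\pi(\spt\widetilde\nu)=X$ contains the fixed point of $\varphi_\ell$, whose code $\ell\ell\cdots$ therefore lies in $\spt\widetilde\nu$, so each cylinder $[\ell^{L}]$ is $\widetilde\nu$-positive). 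This simultaneously re-proves and localises the $\mu$-a.e.\ statement of \cite{Anttila2022}. The genuine difficulty is the estimate $\mu(B(x,r))\le C\,p_{\iii|_n}$ under the OSC rather than the SSC: near a point where two comparable-mass cylinders touch, $B(x,r)$ straddles several neighbouring cylinders whose total mass need not be $\lesssim p_{\iii|_n}$, and — inconveniently — a long $\ell$-run drives $x$ towards a scaled copy of the fixed point of $\varphi_\ell$, which may itself be such a touching point. This is precisely where the doubling hypothesis on $\mu$ is used: doubling excludes such ``fat neighbour'' behaviour, and, together with the standard bounded-distortion observation that for $\widetilde\nu$-typical $\iii$ the shifted points $\pi(\sigma^{n}\iii)$ stay a fixed distance from the boundary of the OSC open set along a set of $n$ of density one, it yields the comparison at enough scales $r$ inside the runs to carry out the argument above. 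Making this precise — essentially a localisation to $\nu$-generic points of the proof that $\dima\mu=\alpha_{\max}$ — is the main obstacle; the rest is bookkeeping.
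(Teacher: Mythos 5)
The paper does not actually prove this theorem: it is dispatched in one sentence, by citing \cite{Anttila2022} and remarking that Lemma 4.7 there extends to arbitrary ergodic measures. Your reconstruction has the right architecture --- the upper bound $\dima(\mu,x)\le\dima\mu=\alpha_{\max}$ is free via \cite{FraserHowroyd2019}, the lower bound comes from arbitrarily long $\ell$-runs in the code supplied by ergodicity (you correctly flag the implicit positivity of cylinder measures needed for this), and you correctly isolate the crux: the upper estimate $\mu(B(x,r))\lesssim p_{\iii|_{k+L}}$ at the scale where the run ends, which is where both doubling and OSC must enter.

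The genuine gap is in the mechanism you propose for that estimate. The ``density-one'' observation --- that $\pi(\sigma^{n}\iii)$ stays a definite distance from $\partial U$ for a density-one set of $n$ --- does not mesh with the $\ell$-runs. The indices you actually need, $n=k(L)+L$, are a density-zero sequence (by Kac, $k(L)$ grows at least exponentially in $L$), and for every $n$ inside the run the shifted word $\sigma^n\iii$ begins with a long $\ell$-block, so $\pi(\sigma^n\iii)$ is pushed towards the fixed point of $\varphi_\ell$, which may lie on $\partial U$ (it does for the dyadic example with $U=(0,1)$). Typicality over a density-one set says nothing about these forced positions. The clean fix is to strengthen the recurrence: fix $\kkk$ with $\varphi_\kkk(X)\subset U$ and demand $\ell^L\kkk\sqsubset\iii$ for every $L$, still a full-measure event by the same Birkhoff argument. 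Then with $k$ chosen so that $\sigma^k\iii\in[\ell^L\kkk]$ one has $x=\pi(\iii)\in\varphi_{\iii|_{k+L}\kkk}(X)$, so Lemma \ref{lemma:osc-ball-measure} gives $\mu(B(x,\delta r_{\iii|_{k+L}}))\le p_{\iii|_{k+L}}$, and boundedly many applications of doubling upgrade the radius $\delta r_{\iii|_{k+L}}$ to $r_{\iii|_{k+L}}=r$; this is exactly the $\mathcal{N}$-set device the paper uses in the proof of Proposition \ref{prop:exact-dim-non-doubling}. (An alternative avoiding recurrence altogether: doubling of $\mu$ is equivalent, by \cite[Theorem 2.3]{Yung2007} as quoted in the paper, to a uniform no-fat-neighbour bound $p_\jjj\le Np_{\iii|_n}$ over level-$n$ cylinders $\jjj$ near $\varphi_{\iii|_n}(X)$, which together with $\#\Sigma(x,r)\le M$ yields $\mu(B(x,r_{\iii|_n}))\le MN\,p_{\iii|_n}$ for \emph{every} $x$, every coding $\iii$ of $x$, and every $n$, with no genericity assumption at all.)
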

\begin{proof}
  In \cite{Anttila2022}, Lemma 4.7 holds for any ergodic measure, and therefore \cite[Theorem 4.1]{Anttila2022} holds in the above form.
\end{proof}
This theorem indicates a drastic difference in the typical behaviour of $\dimloc(\mu,x)$ and $\dima(\mu,x)$ for self-similar measures. In particular, for \emph{any} self-similar measure $\nu$, one has $\nu(D_\alpha^\A(\mu))=0$, for all $\alpha<\alpha_{\max}$. Since we cannot employ the standard measure theoretic tools, we have to develop a more hands on approach. We will first show, by employing the method of types approach as in \cite{Rutar2023a}, that in the symbolic setting the dimension of the sublevel sets $\{x\,:\,\dima(\mu,x)\le\alpha\}$ is given by $\barf(\alpha)$. Constructing a large Moran subset of $D_{\alpha}^\A$, we then pass from the the sublevel sets to the actual multifractal spectrum $\dimh D_{\alpha}^\A$. After this, passing to a subsystem with additional separation, we transfer the symbolic results to the geometric setting of self-similar measures under the OSC. If the self-similar measure is doubling, this is relatively easy. In the non-doubling case, the main obstacle is to show that outside of the full measure set of points where $\dima(\mu,x)=+\infty$, there is a full dimensional subset of $X$  where the multifractal spectrum is captured on the symbolic side. Finally, we show that even if $\mu$ fails to be doubling, there are no points where the pointwise Assouad dimension is in $(\alpha_{\max},+\infty)$.

\section{Multifractal analysis of Bernoulli measures}\label{sec:symbolic}
In this section, we conduct multifractal analysis for the pointwise Assouad dimension of Bernoulli measures on symbolic spaces. The results of this section serve as a basis for the multifractal analysis of self-similar measures conducted later in Section \ref{sec:OSC}.  Recently, an approach to multifractal analysis via Lagrange duality was developed by Rutar \cite{Rutar2023a} and some of his ideas, such as the method of types from large deviation theory, turn out to be useful for our purposes.

For the remainder of the section, we fix a symbolic space $\Sigma$ and a probability vector $\pp\in\PP$ along with the contraction parameters $(r_1,\ldots,r_m)$ which determine our metric $\rho$. We will denote by $\nu=\nu_\pp$ the associated Bernoulli measure and simplify the notation $D_{\alpha}^\A(\nu)$ to 
\begin{align*}
  &D_{\alpha}^\A=\{x\in \spt(\nu)\colon \dima(\mu,x)=\alpha\}\,.
\end{align*}
We also let
\begin{align*}
  &U_{\alpha}^\A=\{x\in \spt(\nu)\colon \dima(\mu,x)\leq\alpha\}\,.
\end{align*}
The main result of this section establishes the full multifractal spectrum for the Bernoulli measures.
\begin{theorem}\label{thm:symbolic-multifractal-spectrum}
  For any $\alpha\in[\alpha_{\min},\alpha_{\max}]$, we have
  \begin{equation*}
      \dimh D_{\alpha}^\A=\dimh U_{\alpha}^\A=\overline{f}(\alpha).
  \end{equation*}
  Moreover, for $\alpha\not\in[\alpha_{\min},\alpha_{\max}]$, we have $D_{\alpha}^\A = \emptyset$.
\end{theorem}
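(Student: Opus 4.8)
### Proof Proposal

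The plan is to establish the theorem by first analyzing the pointwise Assouad dimension of the Bernoulli measure $\nu$ combinatorially via the method of types, then proving matching upper and lower bounds for the Hausdorff dimension of the sublevel sets $U_\alpha^\A$, and finally upgrading to the level sets $D_\alpha^\A$ by constructing a Moran subset on which the pointwise Assouad dimension is \emph{exactly} $\alpha$.

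First, I would obtain a symbolic description of $\dima(\nu,x)$ for $x = \iii \in \Sigma$. Using \eqref{eq:balls-are-cylinders}, the ratio $\nu(B(\iii,R))/\nu(B(\iii,r))$ equals $p_{\iii|_n}/p_{\iii|_{n+k}}$ for the appropriate indices, and comparing $R/r$ to $r_{\iii|_n}/r_{\iii|_{n+k}}$ via the metric $\rho$, one sees that $\dima(\nu,\iii)$ is governed by the worst-case exponent
\begin{equation*}
  \limsup_{k\to\infty}\ \sup_{n\ge 0}\ \frac{-\log\bigl(p_{i_{n+1}}\cdots p_{i_{n+k}}\bigr)}{-\log\bigl(r_{i_{n+1}}\cdots r_{i_{n+k}}\bigr)}
\end{equation*}
taken over subwords of $\iii$ (with suitable care at the endpoints). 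The key point, in the spirit of the method of types from \cite{Rutar2023a}, is that for a long subword $\jjj \sqsubset \iii$ the quantity $\tfrac{-\log p_\jjj}{-\log r_\jjj}$ depends, up to small error, only on the empirical frequency vector (type) of the letters appearing in $\jjj$; maximizing $\tfrac{\sum_i t_i \log p_i}{\sum_i t_i \log r_i}$ over the simplex of types recovers $\alpha_{\max}$, which explains why $\dima(\nu,\iii) = \alpha_{\max}$ whenever every finite word appears as a subword of $\iii$ — and more generally, controlling $\dima(\nu,\iii) \le \alpha$ amounts to forbidding subwords whose type has ratio exceeding $\alpha$. This immediately yields $D_\alpha^\A = \emptyset$ for $\alpha \notin [\alpha_{\min},\alpha_{\max}]$: no finite word can have type-ratio below $\alpha_{\min}$ or above $\alpha_{\max}$, and the pointwise Assouad dimension is always at least the lower local dimension, hence at least $\alpha_{\min}$.

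Next, for the \emph{lower bound} $\dimh U_\alpha^\A \ge \barf(\alpha)$ (and hence $\dimh D_\alpha^\A \ge \barf(\alpha)$ once the level-set refinement is in place), I would build an explicit subset. For $\beta \le \alpha$ with $f(\beta)$ close to $\barf(\alpha)$, take the auxiliary Bernoulli measure $\nu_\beta$ generated by the weighted vector $(p_i^{q}r_i^{\tau(q)})_i$ with $q = q(\beta)$, as in the classical construction recalled after Theorem \ref{thm:classical-formalism}; generic points for $\nu_\beta$ have local dimension $f(\beta)$ and letter frequencies realizing the exponent $\beta$. The problem is that such a generic point will still contain arbitrarily long atypical subwords pushing $\dima$ up to $\alpha_{\max}$. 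To fix this, I would pass to a Moran-type construction: concatenate long $\nu_\beta$-typical blocks but interleave controlled "reset" words so that every sufficiently long subword has type-ratio at most $\alpha$, while keeping the block lengths growing fast enough that the Hausdorff dimension is unaffected — a standard mass-distribution / Frostman argument on the resulting Moran set gives dimension $f(\beta)$, and supremizing over $\beta \le \alpha$ gives $\barf(\alpha)$. For the level set $D_\alpha^\A$ one further arranges, by periodically reinserting a witness subword of type-ratio exactly $\beta' \to \alpha$, that the $\limsup$ equals $\alpha$ precisely, not merely $\le \alpha$.

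For the \emph{upper bound} $\dimh U_\alpha^\A \le \barf(\alpha)$, I would cover $U_\alpha^\A$ efficiently. The constraint $\dima(\nu,\iii) \le \alpha$ forces, for all large scales, that the local "doubling exponent over windows" is at most $\alpha$; combined with the fact that $\udimloc(\nu,\iii) \le \dima(\nu,\iii) \le \alpha$ — so $U_\alpha^\A \subseteq U_\alpha^\loc(\nu)$ — Theorem \ref{thm:classical-formalism} already gives $\dimh U_\alpha^\A \le \dimh U_\alpha^\loc(\nu) = \barf(\alpha)$. Thus the upper bound is essentially free once the inclusion $U_\alpha^\A\subseteq U_\alpha^\loc$ is noted, which follows directly from the definitions since the upper local dimension is the exponent in the one-sided comparison $\mu(B(x,R))/\mu(B(x,r))$ with $r \to 0$ and $R$ fixed, a special case of the two-sided comparison defining $\dima$.

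The main obstacle I anticipate is the lower bound for the level set $D_\alpha^\A$ — specifically, engineering a single Moran set on which $\dima(\nu,\cdot)$ is simultaneously bounded above by $\alpha$ (requiring that \emph{no} long subword is too heavy) and bounded below by $\alpha$ (requiring that infinitely often a subword of type-ratio arbitrarily close to $\alpha$ \emph{does} appear), all without sacrificing Hausdorff dimension $\barf(\alpha)$. The competing requirements must be balanced by a careful choice of a sequence of scales along which the witness words are inserted, sparse enough not to lower the dimension but dense enough to be seen in the $\limsup$; verifying that the pointwise Assouad dimension of every point of the construction lands exactly on $\alpha$ (not just $\nu_\beta$-a.e. point) is where the estimates will be most delicate, and is also where the method-of-types bookkeeping — tracking how the type of a window evolves as it slides across block boundaries — does the real work.
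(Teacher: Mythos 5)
Your proposal takes essentially the same route as the paper: symbolic characterization of $\dima(\nu,\cdot)$ via the worst subword exponent, the inclusion $U_\alpha^\A\subset U_\alpha^\loc$ for the upper bound, and a Moran construction of type-controlled blocks interleaved with witness words for the lower bound on $\dimh D_\alpha^\A$. The paper's implementation differs only in minor technical details — it bounds the Moran set dimension directly from the method-of-types counting lemma rather than a Frostman argument on an auxiliary Bernoulli measure, and the interleaved witnesses are the growing prefixes $\iii|_{kn}$ of a single greedily-constructed point $\iii$ with $\dima(\nu,\iii)=\alpha$ exactly, which pins the $\limsup$ to $\alpha$ automatically rather than through a sequence of types $\beta'\to\alpha$.
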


\subsection{Method of types}\label{sec:types}
For $\aaa\in\Sigma_*$ and $i\in\Lambda$, we let $N_i(\aaa)$ denote the number of times the symbol $i$ appears in $\aaa$ and set
\begin{equation*}
  \xii_i(\aaa)=\frac{N_i(\aaa)}{|\aaa|}\,.
\end{equation*}
We denote the \emph{type of $\aaa$} by $\xii(\aaa)\coloneqq(\xii_1(\aaa),\ldots,\xii_{m}(\aaa))\in\PP$. Given a collection $\Gamma\subset \Sigma_*$, we let
\begin{equation*}
  \PP_\Gamma=\left\{\xii(\aaa)\in\PP\colon \aaa\in\Gamma\right\}.
\end{equation*}
For $\Gamma\subset \Sigma_*$ and $\qq\in\PP_{\Gamma}$, we define the \emph{type class} of $\qq$ by
\begin{equation}\label{eq:Tn_def}
T_\Gamma(\qq)=\{\aaa\in\Gamma\colon \xii(\aaa)=\qq\}.
\end{equation}
Simplifying the notation slightly, we denote $T_n(\qq)=T_{\Sigma_n}(\qq)$. The main benefit in introducing type classes is that the quantities $r_{\aaa}$ and $p_{\aaa}$ are constant on each type class. We also remark that the type classes form a partition of $\Gamma$, i.e. $\Gamma=\bigcup_{\qq\in\PP_{\Gamma}}T_\Gamma(\qq)$.

We say that a sequence $(\Gamma_n)_{n\in\N}$, where $\Gamma_n\subset \Sigma_n$ is \emph{abundant}, if the following conditions are satisfied:
\begin{aenumerate}
  \item There is a constant $C>0$, such that $\#T_{\Gamma_n}(\qq)\geq C\#T_n(\qq)$, for all $n\in\N$ and $\qq\in\PP_{\Gamma_n}$.\label{it:abundant-1}
  \item For every $\delta>0$, the set $\PP_{\Gamma_n}$ is $\delta$-dense in $\PP$ for all large enough $n\in\N$.\label{it:abundant-2}
\end{aenumerate}
In Section \ref{sec:OSC}, the absence of geometric separation of cylinders causes difficulties. Using abundant sequences, and the induced alphabets, we will be able to overcome these and transfer the computation of the pointwise Assouad dimension to the symbolic side. 

We recall the following basic fact, which follows from (A1) and \cite[Lemma 2.3]{Csiszar2011}.
\begin{lemma}\label{lemma:T_n-cardinality-variant}
  If $(\Gamma_n)_n$ is abundant, then for any $n\in\N$ and $\qq\in\PP_{\Gamma_n}$, we have
  \begin{equation*}
      nH(\qq) - \log C(n+1)^{m+1}\leq\log\#T_{\Gamma_n}(\qq)\leq nH(\qq).
  \end{equation*}
\end{lemma}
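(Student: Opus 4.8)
The plan is to deduce the lemma directly from the two ingredients flagged in its statement: the classical method-of-types cardinality estimate and property (A1) of an abundant sequence. The first observation is that every $\qq\in\PP_{\Gamma_n}$ is the type $\xii(\aaa)$ of some word $\aaa\in\Gamma_n\subseteq\Sigma_n$, hence is a genuine empirical distribution with denominator $n$ over the $m$-letter alphabet $\Lambda$; in particular $T_n(\qq)=T_{\Sigma_n}(\qq)$ is nonempty and \cite[Lemma 2.3]{Csiszar2011} applies, giving (with the exact power of $n+1$ being immaterial)
\[
  nH(\qq)-m\log(n+1)\;\le\;\log\#T_n(\qq)\;\le\;nH(\qq).
\]

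For the upper bound in the lemma no separation hypothesis is needed: since $\Gamma_n\subseteq\Sigma_n$ we have $T_{\Gamma_n}(\qq)\subseteq T_n(\qq)$, and therefore $\log\#T_{\Gamma_n}(\qq)\le\log\#T_n(\qq)\le nH(\qq)$.

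For the lower bound, let $C_0>0$ be the constant furnished by (A1), which we may assume satisfies $C_0\le1$ (otherwise replace it by $\min\{C_0,1\}$). Property (A1) together with the lower estimate above yields $\#T_{\Gamma_n}(\qq)\ge C_0\#T_n(\qq)$, and since $T_{\Gamma_n}(\qq)\ni\aaa$ is nonempty we may take logarithms to get
\[
  \log\#T_{\Gamma_n}(\qq)\;\ge\;\log C_0+nH(\qq)-m\log(n+1).
\]
Because $\log(n+1)\ge0$, a single extra power of $n+1$ absorbs the constant: taking $C\coloneqq1/C_0\ge1$ we have $\log C_0-m\log(n+1)\ge-\log C-(m+1)\log(n+1)=-\log\bigl(C(n+1)^{m+1}\bigr)$, which is exactly the claimed inequality.

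There is no substantive obstacle here: all of the real content is already packaged in condition (A1) and in the cited type-counting lemma. The only thing requiring a little care is the bookkeeping of constants, and the point of the slightly lossy exponent $m+1$ (rather than the sharp $m$ produced by the method of types) is precisely that it allows one constant $C$ to absorb the abundance constant $C_0$ uniformly over all $n\in\N$.
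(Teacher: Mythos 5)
Your proof is correct and follows exactly the route the paper itself indicates: the paper merely asserts that the lemma ``follows from (A1) and \cite[Lemma 2.3]{Csiszar2011}'' without writing out the argument, and your proposal supplies the missing bookkeeping (the upper bound via $T_{\Gamma_n}(\qq)\subseteq T_n(\qq)$, the lower bound by combining (A1) with the Csisz\'ar type-counting estimate, and the extra power of $n+1$ absorbing the abundance constant). The only thing to note is that the $C$ appearing in the lemma's displayed inequality is, in the paper's notation, the same $C$ that comes from condition (A1), rather than its reciprocal as you define it; this is purely cosmetic and does not affect the correctness of the argument.
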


Classically, the pointwise dimensions of self-similar measures are described by the entropy of the measure $\mu$ and the Lyapunov exponent of the associated dynamical system, the latter of which captures the contribution of the geometry of the projection map $\pi$ at typical points. For probability vectors $\qq\in\PP$ and $\pp\in\PP$ (the latter of which we have fixed), we denote by 
\begin{equation*}
H(\qq)=-\sum_{i=1}^m q_i\log q_i, \text{ and}\quad H_\pp(\qq)=-\sum_{i=1}^m q_i\log p_i,
\end{equation*}
the \emph{entropy} of $\qq$, and the \emph{cross entropy} of $\pp$ relative to $\qq$, respectively. The \emph{Lyapunov exponent} of $\qq$ is defined by
\begin{equation*}
\lambda(\qq)=-\sum_{i=1}^m q_i\log r_i.
\end{equation*}
These notions are related to the pointwise Assouad dimension of Bernoulli measures via the following characterisation which we will use repeatedly in what follows. The lemma follows easily from \eqref{eq:balls-are-cylinders} and the definition of the Bernoulli measure.
\begin{lemma}\label{lemma:explicit-assouad}
  Let $p\in\PP$ and $\nu$ be the Bernoulli measure on $\Sigma$ associated to $p$. Then for any $\iii\in\Sigma$ we have
  \begin{align*}
  \dima(\nu,\iii)&=\limsup_{n\to\infty}\sup_{\aaa\sqsubset \iii,|\aaa|=n}\frac{\log p_{\aaa}}{\log r_{\aaa}}=\limsup_{n\to\infty}\sup_{\aaa\sqsubset \iii,|\aaa|=n}\frac{H_{\pp}(\xii(\aaa))}{\lambda(\xii(\aaa))}
  \end{align*}
\end{lemma}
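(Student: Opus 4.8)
I would prove the lemma by unwinding the definition of $\dima(\nu,\iii)$ through the identification \eqref{eq:balls-are-cylinders} of symbolic balls with cylinders. First note that the second equality is purely algebraic: since $\log p_\aaa=\sum_i N_i(\aaa)\log p_i=-|\aaa|\,H_\pp(\xii(\aaa))$ and likewise $\log r_\aaa=-|\aaa|\,\lambda(\xii(\aaa))$, one has $\tfrac{\log p_\aaa}{\log r_\aaa}=\tfrac{H_\pp(\xii(\aaa))}{\lambda(\xii(\aaa))}$ for every $\aaa\in\Sigma_*$, so it suffices to prove the first equality; write $s_0$ for its right-hand side. The reduction step is: fix $\iii\in\Sigma$ and $0<r<R$, and let $N=n(\iii,R)\le n=n(\iii,r)$ be as in \eqref{eq:balls-are-cylinders}, so that $B(\iii,R)=[\iii|_N]$, $B(\iii,r)=[\iii|_n]$ and, setting $\aaa:=(i_{N+1},\dots,i_n)\sqsubset\iii$ with $|\aaa|=n-N=:k$,
\[
\frac{\nu(B(\iii,R))}{\nu(B(\iii,r))}=\frac{p_{\iii|_N}}{p_{\iii|_n}}=\frac{1}{p_\aaa}.
\]
From the scale inequalities $r_{\iii|_N}\le R$ and $r<r_{\iii|_{n-1}}$ one also gets $r_\bbb^{-1}<R/r$, where $\bbb:=(i_{N+1},\dots,i_{n-1})$ is $\aaa$ with its last symbol removed (so $|\bbb|=k-1$). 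Thus $R/r$ and $r_\aaa^{-1}$ agree up to one boundary symbol, which is a bounded multiplicative error; this is the only place geometry enters.

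For the upper bound $\dima(\nu,\iii)\le s_0$, fix $\eps>0$ and use the definition of $s_0$ as a limsup to choose $n_0$ with $\tfrac{\log p_\bbb}{\log r_\bbb}\le s_0+\eps$, equivalently $p_\bbb\ge r_\bbb^{\,s_0+\eps}$, for every subword $\bbb\sqsubset\iii$ with $|\bbb|\ge n_0$. Given $0<r<R$ as above: if $k\le n_0$ then $p_\aaa\ge(\min_i p_i)^{n_0}$, so the ratio is $\le (\min_i p_i)^{-n_0}\le (\min_i p_i)^{-n_0}(R/r)^{s_0+\eps}$; if $k>n_0$ then $|\bbb|=k-1\ge n_0$, hence $p_\aaa\ge(\min_i p_i)\,p_\bbb\ge(\min_i p_i)\,r_\bbb^{\,s_0+\eps}\ge(\min_i p_i)\,(R/r)^{-(s_0+\eps)}$, using $r_\bbb^{-1}<R/r<1^{-1}$. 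In both cases $\nu(B(\iii,R))/\nu(B(\iii,r))\le C(R/r)^{s_0+\eps}$ with $C=(\min_i p_i)^{-n_0}$, so $\dima(\nu,\iii)\le s_0+\eps$, and letting $\eps\downarrow0$ gives the bound.

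For the lower bound $\dima(\nu,\iii)\ge s_0$, fix $s<s'<s_0$. By definition of the limsup there are infinitely many $n$ and subwords $\aaa_n\sqsubset\iii$, $|\aaa_n|=n$, with $\tfrac{\log p_{\aaa_n}}{\log r_{\aaa_n}}>s'$, i.e.\ $p_{\aaa_n}<r_{\aaa_n}^{\,s'}$. Writing $\aaa_n=(i_{j_n+1},\dots,i_{j_n+n})$ and taking $R_n:=r_{\iii|_{j_n}}$, $r_n:=r_{\iii|_{j_n+n}}$, so that $B(\iii,R_n)=[\iii|_{j_n}]$ and $B(\iii,r_n)=[\iii|_{j_n+n}]$ by \eqref{eq:balls-are-cylinders}, we get
\[
\frac{\nu(B(\iii,R_n))}{\nu(B(\iii,r_n))}=\frac{1}{p_{\aaa_n}}>r_{\aaa_n}^{-s'}=\Bigl(\tfrac{R_n}{r_n}\Bigr)^{s}\Bigl(\tfrac{R_n}{r_n}\Bigr)^{s'-s}.
\]
Since $R_n/r_n=r_{\aaa_n}^{-1}\ge(\min_i r_i)^{-n}\to\infty$ and $s'-s>0$, the factor $(R_n/r_n)^{s'-s}$ eventually exceeds any prescribed constant, so no $C$ can satisfy $\nu(B(\iii,R))/\nu(B(\iii,r))\le C(R/r)^{s}$ for all $0<r<R$; hence $s$ does not belong to the set defining $\dima(\nu,\iii)$, giving $\dima(\nu,\iii)\ge s$. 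Letting $s\uparrow s_0$ completes the proof. The only mildly delicate points are the bookkeeping of the single boundary symbol relating $R/r$ to $r_\aaa$, and arranging in the lower bound a genuine violation of the Assouad-type inequality for \emph{every} constant $C$, which is handled by sacrificing a bit of the exponent and exploiting $R_n/r_n\to\infty$; neither is a serious obstacle.
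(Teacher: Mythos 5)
Your proof is correct and carries out exactly what the paper leaves as a one-line remark, namely using \eqref{eq:balls-are-cylinders} to translate balls into cylinders and the product structure of the Bernoulli measure, together with the purely algebraic identification $\log p_\aaa = -|\aaa|H_\pp(\xii(\aaa))$, $\log r_\aaa = -|\aaa|\lambda(\xii(\aaa))$ for the second equality. One small slip: in the lower bound you wrote $R_n/r_n = r_{\aaa_n}^{-1}\ge(\min_i r_i)^{-n}$, but the correct bound is $r_{\aaa_n}^{-1}\ge(\max_i r_i)^{-n}$; since this still tends to infinity the argument is unaffected.
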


\subsection{Multifractal analysis}
Since we cannot employ the standard measure theoretic tools for multifractal analysis, we will commence by building large Moran sets inside the level sets $D_\alpha^\A$ and derive the multifractal spectrum for the pointwise Assouad dimension via their dimensions. To that end, given $\alpha\geq 0$, we let
\begin{align*}
&\PP(\alpha)=\left\{\qq\in\PP\colon \frac{H_{\pp}(\qq)}{\lambda(\qq)}\leq \alpha\right\}\,,\\
&\Gamma_n(\alpha)=\left\{\aaa\in\Gamma_n\colon \frac{H_{\pp}(\xii(\aaa))}{\lambda(\xii(\aaa))}\leq \alpha\right\},\\
&\PP_{\Gamma_n}(\alpha)=\PP_{\Gamma_n(\alpha)}=\PP(\alpha)\cap \PP_{\Gamma_n}.
\end{align*}
The next lemma follows by elementary arguments from \ref{it:abundant-2} and the continuity of $\qq\mapsto \frac{H_{\pp}(\qq)}{\lambda(\qq)}$.
\begin{lemma}\label{lemma:density}
  Let $\alpha>\alpha_{\min}$. Then for all small enough $\delta>0$, $\PP_{\Gamma_n}(\alpha)$ is $\delta$-dense in $\PP(\alpha)$ for all large enough $n\in\N$.
\end{lemma}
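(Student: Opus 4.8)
The first thing I would do is rephrase the statement so that the constraint defining $\PP(\alpha)$ becomes \emph{linear}. Since $\lambda(\qq)=-\sum_i q_i\log r_i>0$ for every $\qq$ in the closed simplex, for $\qq\in\PP$ one has $H_\pp(\qq)/\lambda(\qq)\le\alpha$ precisely when $\ell(\qq)\le 0$, where $\ell(\qq):=H_\pp(\qq)-\alpha\lambda(\qq)=\sum_{i=1}^m q_i(\alpha\log r_i-\log p_i)$ is a linear, hence Lipschitz (say with constant $L$), function of $\qq$. Thus $\PP(\alpha)=\{\qq\in\PP:\ell(\qq)\le 0\}$ and $\PP_{\Gamma_n}(\alpha)=\{\qq\in\PP_{\Gamma_n}:\ell(\qq)\le 0\}$, and the only real difficulty is that \ref{it:abundant-2} hands us approximating points in $\PP_{\Gamma_n}$ with no control on the sign of $\ell$ at those points.

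The key point is that $\alpha>\alpha_{\min}$ gives us a \emph{strictly feasible} point. Picking $j_0$ with $\log p_{j_0}/\log r_{j_0}=\alpha_{\min}$, the quotient $H_\pp/\lambda$ extends continuously to the closed simplex and takes the value $\alpha_{\min}<\alpha$ at the vertex $e_{j_0}$, hence is $<\alpha$ on a relative neighbourhood of $e_{j_0}$ which meets $\PP$; fix $\qq_0\in\PP$ in this neighbourhood, so that $\ell(\qq_0)<0$. Then for any $\qq\in\PP(\alpha)$ and $t\in(0,1]$ the retracted point $\qq_t:=(1-t)\qq+t\qq_0$ lies in $\PP$ by convexity, satisfies $\ell(\qq_t)=(1-t)\ell(\qq)+t\ell(\qq_0)\le t\ell(\qq_0)<0$ by linearity, and is within $t\,\diam\PP$ of $\qq$. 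In other words, retracting $\PP(\alpha)$ slightly towards $\qq_0$ moves every point a controlled distance while opening up a uniform safety margin $|\ell(\qq_t)|\ge t|\ell(\qq_0)|$.

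To finish, given a small target $\delta>0$ I would fix $t$ proportional to $\delta$ so that $t\,\diam\PP\le\delta/2$, then a finer scale $\delta_0\le\min\{\delta/2,\;t|\ell(\qq_0)|/L\}$, and invoke \ref{it:abundant-2} to get $N$ with $\PP_{\Gamma_n}$ being $\delta_0$-dense in $\PP$ for all $n\ge N$. For $\qq\in\PP(\alpha)$, approximate $\qq_t$ to within $\delta_0$ by some $\qq'\in\PP_{\Gamma_n}$; the Lipschitz bound gives $\ell(\qq')<\ell(\qq_t)+t|\ell(\qq_0)|\le 0$, so $\qq'\in\PP_{\Gamma_n}(\alpha)$, while $|\qq-\qq'|\le|\qq-\qq_t|+|\qq_t-\qq'|<\delta/2+\delta/2=\delta$. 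Hence $\PP_{\Gamma_n}(\alpha)$ is $\delta$-dense in $\PP(\alpha)$ for every $n\ge N$.

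There is essentially nothing hard here — which is why the paper calls it elementary — and the one genuine ingredient is the Slater-point observation in the second paragraph: $\alpha>\alpha_{\min}$ is exactly what forces $\ell$ to be strictly negative somewhere on $\PP$ (for $\alpha=\alpha_{\min}$ the set $\PP(\alpha)$ can even be empty), and this is what lets us convert the unconstrained density in \ref{it:abundant-2} into a density that respects the constraint. The rest is the standard ``shrink a convex feasible region toward an interior point, then apply an unconstrained approximation'' trick combined with Lipschitz continuity of $\ell$ and the continuity of $\qq\mapsto H_\pp(\qq)/\lambda(\qq)$.
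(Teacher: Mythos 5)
Your proof is correct and supplies exactly the elementary argument that the paper leaves unwritten: the authors only assert the lemma follows from \ref{it:abundant-2} and continuity of $\qq\mapsto H_\pp(\qq)/\lambda(\qq)$, and your linearization of the constraint together with the retraction toward a strictly feasible $\qq_0$ is a clean realization of precisely that. The one genuinely non-obvious ingredient you rightly isolate is the Slater-point step — approximating a boundary point of $\PP(\alpha)$ directly via \ref{it:abundant-2} could land on the wrong side of the constraint, and the strict feasibility forced by $\alpha>\alpha_{\min}$ is what repairs this.
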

The purpose of introducing the sets $\Gamma_n(\alpha)$ is the following: For every $n\in\N$, the symbolic space $\Sigma(\Gamma_n(\alpha))$ is a subset of $U_\alpha^\A$, since the construction essentially forces $\frac{H_{\pp}(\xii(\aaa))}{\lambda(\xii(\aaa))}\leq \alpha$ uniformly over all subwords of a given word $\iii\in\Sigma(\Gamma_n(\alpha))$. Moreover, the Hausdorff dimension of a symbolic space is easy to calculate and the abundance of the sequence $\Gamma_n$ together with some combinatorial arguments allows us to approximate $\barf(\alpha)$ arbitrarily well with the Hausdorff dimensions of the sets $\Sigma(\Gamma_n(\alpha))$. This is made formal in the following lemmata. Note that in what follows, we consider $\Sigma(\Gamma_n(\alpha))$ as a subset of $\Sigma=\Sigma(\Lambda)$. 

\begin{lemma}\label{lemma:subset}
  For all $\alpha>\alpha_{\min}$, all large enough $n$ and all $\iii\in\Sigma(\Gamma_n(\alpha))$, we have
  \begin{equation*}
    \dima(\nu,\iii)\leq \alpha.
  \end{equation*}
\end{lemma}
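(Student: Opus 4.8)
The plan is to go through the symbolic formula in Lemma~\ref{lemma:explicit-assouad}, which reduces the claim to a uniform estimate on finite windows. Recall that
\[
  \dima(\nu,\iii)=\limsup_{k\to\infty}\ \sup_{\aaa\sqsubset\iii,\ |\aaa|=k}\frac{\log p_\aaa}{\log r_\aaa},
\]
so it suffices to bound the inner ratio by $\alpha+o(1)$ as $k\to\infty$, uniformly over all windows $\aaa$ of length $k$, with the error depending only on $k,n,\pp$. The first observation I would make is that, since $|\aaa|$ cancels in $H_\pp(\xii(\aaa))/\lambda(\xii(\aaa))=\log p_\aaa/\log r_\aaa$, membership $\bbb\in\Gamma_n(\alpha)$ is just the single inequality $\log p_\bbb\ge\alpha\log r_\bbb$ (keeping in mind $\log r_\bbb<0$).

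Next, fix $\iii\in\Sigma(\Gamma_n(\alpha))$ and write $\iii=\bbb^{(1)}\bbb^{(2)}\cdots$ with each $\bbb^{(\ell)}\in\Gamma_n(\alpha)$ of length $n$. The key step is to split an arbitrary window $\aaa\sqsubset\iii$, $|\aaa|=k$, as $\aaa=\aaa'\,\bbb^{(j+1)}\cdots\bbb^{(j+t-1)}\,\aaa''$, where $\aaa'$ is a (possibly empty) proper suffix of a block and $\aaa''$ a (possibly empty) proper prefix of a block, both of length $<n$, so that the middle factors are \emph{complete} blocks lying in $\Gamma_n(\alpha)$. Summing $\log p_\bullet$ and $\log r_\bullet$ over the factors, using $\log p_{\bbb^{(\ell)}}\ge\alpha\log r_{\bbb^{(\ell)}}$ on each complete middle block, and re-assembling gives
\[
  \log p_\aaa\ \ge\ \alpha\log r_\aaa+\bigl(\log p_{\aaa'}-\alpha\log r_{\aaa'}\bigr)+\bigl(\log p_{\aaa''}-\alpha\log r_{\aaa''}\bigr)\ \ge\ \alpha\log r_\aaa-C,
\]
where $C=C(n,\pp)$: indeed $\log p_{\aaa'}\ge n\log(\min_i p_i)$ because $|\aaa'|<n$, while $-\alpha\log r_{\aaa'}\ge 0$, and similarly for $\aaa''$. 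Dividing by $\log r_\aaa<0$ and using $\lvert\log r_\aaa\rvert\ge k\,\log(1/\max_i r_i)$ then yields
\[
  \frac{\log p_\aaa}{\log r_\aaa}\ \le\ \alpha+\frac{C}{\lvert\log r_\aaa\rvert}\ \le\ \alpha+\frac{C}{k\,\log(1/\max_i r_i)}.
\]
Since the right-hand side does not depend on the particular window and tends to $\alpha$, taking $\sup$ over $\aaa$ with $|\aaa|=k$ and then $\limsup_{k\to\infty}$ gives $\dima(\nu,\iii)\le\alpha$ by Lemma~\ref{lemma:explicit-assouad}. (The hypothesis $\alpha>\alpha_{\min}$ and ``$n$ large'' serve only to guarantee $\Gamma_n(\alpha)\neq\emptyset$ via Lemma~\ref{lemma:density}; note also that for $k\ge 2n$ at least one complete middle block occurs, while smaller windows are irrelevant to the $\limsup$.)

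I do not expect a genuine obstacle here. The only points requiring care are the sign bookkeeping — every $\log p_\bullet$ and $\log r_\bullet$ is negative, so inequalities flip when dividing by $\log r_\aaa$ — and the fact that the two boundary fragments $\aaa',\aaa''$ together contribute only an $O(1/k)$ perturbation to the ratio, which is exactly why the bound holds only in the limit of the window length and not at a fixed scale.
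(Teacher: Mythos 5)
Your proof is correct and takes essentially the same approach as the paper: decompose a long window $\aaa\sqsubset\iii$ into a prefix/suffix fragment of length $<n$ plus complete blocks from $\Gamma_n(\alpha)$, use the defining inequality of $\Gamma_n(\alpha)$ on each complete block, absorb the two fragments into an $O(n)$ additive error in $\log p_\aaa$, divide by $\log r_\aaa$, and let the window length $\to\infty$ via Lemma~\ref{lemma:explicit-assouad}. The only cosmetic difference is that the paper writes the ratio in the type form $H_\pp(\xii(\aaa))/\lambda(\xii(\aaa))$ and sums $N_i(\aaa)\log p_i$, whereas you work directly with $\log p_\aaa/\log r_\aaa$; these are identical quantities.
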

\begin{proof}
  For a fixed $\alpha>\alpha_{\min}$, the set $\Gamma_n(\alpha)$ is non empty for all large enough $n$ by Lemma \ref*{lemma:density}. Let $\iii\in \Sigma(\Gamma_n(\alpha))$ and let $\aaa$ be a finite subword of $\iii$, with $|\aaa|\geq 2n$. It follows that
  \begin{equation*}
      \aaa=\bbb\aaa_1\aaa_2\ldots\aaa_k\bbb',
  \end{equation*}
  where $\aaa_\ell\in \Gamma_n(\alpha)$, for all $\ell=1,\ldots,k$ and $|\bbb|,|\bbb'|\leq n$, and thus
  \begin{align*}
      \frac{H_{\pp}(\xii(\aaa))}{\lambda(\xii(\aaa))}&=\frac{\sum_{i\in\Lambda} N_i(\aaa)\log p_i}{\sum_{i\in\Lambda} N_i(\aaa)\log r_i}\\
      &\leq\frac{\sum_{\ell=1}^k\sum_{i\in\Lambda} N_i(\aaa_\ell)\log p_i+2n\log p_{\min}}{\sum_{\ell=1}^k\sum_{i\in\Lambda} N_i(\aaa_\ell)\log r_i}
      \leq\alpha+\frac{2 \log p_{\min}}{k\log r_{\max}}\,.        
  \end{align*}
  Letting $|\aaa|\to\infty$, sends $k\to\infty$ and recalling Lemma \ref{lemma:explicit-assouad} gives the claim.
\end{proof}
The following technical lemma is used to bound the Hausdorff dimension of $\Sigma(\Gamma_n(\alpha))$ from below.
\begin{lemma}\label{lemma:P_n-density}
  Let $\alpha\in (\alpha_{\min},\alpha_{\max}]$ and let $(\Gamma_n)_n$ be an abundant sequence. Then for any $\varepsilon>0$, and for all large enough $n\in\N$, there exists $\qq\in \PP_{\Gamma_n}(\alpha)$, such that
  \begin{equation*}
      \frac{H(\qq)}{\lambda(\qq)}\geq \overline{f}(\alpha)-\varepsilon.
  \end{equation*}
\end{lemma}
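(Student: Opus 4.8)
The plan is to reduce the statement to the well-known analytic description of $\overline{f}$ recalled in Section \ref{sec:classical} together with the density property \ref{it:abundant-2} of abundant sequences. Recall that $\overline{f}(\alpha)$ is realized by the auxiliary Bernoulli measure attached to the tilted probability vector $\pv = (p_1^qr_1^{\tau(q)},\ldots,p_m^qr_m^{\tau(q)})$ with $q=q(\alpha)$, when $\alpha_{\min}<\alpha\le\alpha(0)$, and by the vector $\pv$ with $q=0$, i.e.\ the dimension-maximizing vector, when $\alpha(0)<\alpha\le\alpha_{\max}$. In either case there is a probability vector $\pv = \pv(\alpha)\in\PP$ with the two properties (i) $H_{\pp}(\pv)/\lambda(\pv)\le\alpha$ — with equality in the first regime and strict inequality in the second — and (ii) $H(\pv)/\lambda(\pv) = \overline{f}(\alpha)$. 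Property (ii) is exactly the content of \eqref{eq:mf_expl} (the numerator there is $-\lambda(\pv)H(\pv)/\lambda(\pv)\cdot(\ldots)$; one checks directly that $\sum \pv_i\log\pv_i = -H(\pv)$ and $\sum\pv_i\log r_i = -\lambda(\pv)$), and property (i) follows from \eqref{eq:alpha-q-correspondence} together with the monotonicity of $\alpha(q)$.

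Next I would choose $\pv$ to lie in the \emph{interior} of $\PP(\alpha)$ so that the density argument applies cleanly. If $\alpha_{\min}<\alpha\le\alpha(0)$, the point $\pv(\alpha)$ sits on the boundary $\{H_{\pp}(\qq)/\lambda(\qq)=\alpha\}$, so instead I would pick a slightly larger parameter: by continuity of $\alpha\mapsto\overline{f}(\alpha)$ (which is continuous on $[\alpha_{\min},\alpha_{\max}]$ by the discussion after \eqref{eq:mf_expl}), fix $\alpha'$ with $\alpha_{\min}<\alpha'<\alpha$ and $\overline{f}(\alpha')>\overline{f}(\alpha)-\varepsilon/2$, and set $\qq_0=\pv(\alpha')$. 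Then $H_{\pp}(\qq_0)/\lambda(\qq_0)=\alpha'<\alpha$, so $\qq_0$ lies in the interior of $\PP(\alpha)$, and $H(\qq_0)/\lambda(\qq_0)=\overline{f}(\alpha')>\overline{f}(\alpha)-\varepsilon/2$. (In the regime $\alpha(0)<\alpha\le\alpha_{\max}$ we may simply take $\qq_0=\pv(\alpha(0))$, the dimension-maximizing vector, which already lies in the interior of $\PP(\alpha)$ and has $H(\qq_0)/\lambda(\qq_0)=\dimh\Sigma=\overline{f}(\alpha)$.) Since $\qq\mapsto H(\qq)/\lambda(\qq)$ and $\qq\mapsto H_{\pp}(\qq)/\lambda(\qq)$ are continuous on $\PP$ and $\qq_0$ is an interior point of $\PP(\alpha)$, there is $\delta>0$ such that the $\delta$-ball around $\qq_0$ is contained in $\PP(\alpha)$ and every $\qq$ in that ball satisfies $H(\qq)/\lambda(\qq)>\overline{f}(\alpha)-\varepsilon$. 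By \ref{it:abundant-2}, for all large enough $n$ the set $\PP_{\Gamma_n}$ is $\delta$-dense in $\PP$, hence meets this $\delta$-ball; any $\qq$ in the intersection lies in $\PP_{\Gamma_n}(\alpha)=\PP(\alpha)\cap\PP_{\Gamma_n}$ and satisfies the required inequality.

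The main obstacle is the bookkeeping at the two ends of the spectrum and the boundary case: one must make sure the chosen type vector lands strictly inside $\PP(\alpha)$ so that a genuinely open neighbourhood can be used against the $\delta$-density, which forces the small perturbation $\alpha'<\alpha$ and the appeal to continuity of $\overline{f}$; and one must verify that the tilted vector $\pv(\alpha)$ really does compute $\overline{f}(\alpha)$ via \eqref{eq:mf_expl}, handling separately the two branches in the definition of $\overline{f}$. Everything else is a direct combination of the explicit formulas of Section \ref{sec:classical} with the density axiom \ref{it:abundant-2}; the cardinality estimate Lemma \ref{lemma:T_n-cardinality-variant} is not needed here but will be used in the subsequent step that turns this density statement into a lower bound for $\dimh\Sigma(\Gamma_n(\alpha))$.
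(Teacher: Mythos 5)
Your proof is correct and follows essentially the same route as the paper: identify the tilted probability vector $\pv(\alpha)$ (or the dimension-maximizing vector when $\alpha>\alpha(0)$) realizing $\overline{f}(\alpha)$ via \eqref{eq:mf_expl}, then use abundance and continuity of $\qq\mapsto H(\qq)/\lambda(\qq)$ to locate a type in $\PP_{\Gamma_n}(\alpha)$ nearby. The only cosmetic difference is that you perturb $\alpha$ slightly to push $\qq_0$ into the interior of $\PP(\alpha)$ and apply \ref{it:abundant-2} directly, whereas the paper delegates that step to Lemma~\ref{lemma:density}, which asserts $\delta$-density of $\PP_{\Gamma_n}(\alpha)$ in $\PP(\alpha)$.
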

\begin{proof}
  Assume first that $\alpha_{\min}<\alpha=\alpha(q)\le\alpha(0)$. Let $\varepsilon>0$, $\tau=\tau(q)$ and $\oomega=(p_1^qr_1^{\tau},\ldots,p_m^q r^{\tau}_m)\in\PP$ and note that 
  \begin{equation*}
  \frac{H_{\pp}(\oomega)}{\lambda(\oomega)}=\frac{\sum_{i=1}^mp_i^qr_i^{\tau}\log p_i}{\sum_{i=1}^mp_i^qr_i^{\tau}\log r_i}=\alpha\,.
  \end{equation*}
  Observe that the map $\qq\mapsto \frac{H(\qq)}{\lambda(\qq)}$ is continuous and recall that by Lemma \ref{lemma:density}, $\PP_{\Gamma_n}(\alpha)$ is $\delta$-dense in $\PP(\alpha)$ for all large $n$. Choosing $\delta$ small and $n\in\N$ large, we may thus pick $\qq\in\PP_{\Gamma_n}(\alpha)$, such that
  \begin{equation*}
  \frac{H(\qq)}{\lambda(\qq)}\geq \frac{H(\oomega)}{\lambda(\oomega)}-\varepsilon=\frac{\sum_{i=1}^mp_i^qr_i^{\tau}\log p_i^qr_i^{\tau}}{\sum_{i=1}^mp_i^qr_i^{\tau}\log r_i}-\varepsilon=\overline{f}(\alpha)-\varepsilon\,,
  \end{equation*}
  where, in the last equality, we used \eqref{eq:mf_expl} and the fact that $f(\alpha)=\overline{f}(\alpha)$ since we have assumed that $\alpha\le\alpha(0)$.
  
  If $\alpha>\alpha(0)$, we let $\oomega=(r_1^s,\ldots,r_m^s)$ where $s=\dimh\Sigma=\tau(0)=\barf(\alpha)$. Then
  \begin{equation*}
  \frac{H_{\pp}(\oomega)}{\lambda(\oomega)}=\frac{\sum_{i=1}^mr_i^{s}\log p_i}{\sum_{i=1}^mr_i^{s}\log r_i}=\alpha(0)<\alpha\,,
  \end{equation*}
  so $\oomega\in\PP(\oomega)$. Arguing as above, there is $\qq\in\PP_{\Gamma_n}(\alpha)$, such that
  \begin{equation*}
  \frac{H(\qq)}{\lambda(\qq)}\geq \frac{H(\oomega)}{\lambda(\oomega)}-\varepsilon=s-\varepsilon=\overline{f}(\alpha)-\varepsilon\,,
  \end{equation*}
  as required.
  \end{proof}

\begin{lemma}\label{lemma:sub-level-dimension}
  Let $\alpha\in(\alpha_{\min},\alpha_{\max}]$, and let $(\Gamma_n)_n$ be an abundant sequence. Then for any $\varepsilon>0$, there is $n\in\N$, such that
  \begin{equation*}
      \dimh \Sigma(\Gamma_n(\alpha))\geq\overline{f}(\alpha)-\varepsilon.
  \end{equation*}
\end{lemma}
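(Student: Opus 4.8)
The key observation is that $\Sigma(\Gamma_n(\alpha))$ is itself a symbolic space over the finite ``block alphabet'' $\Gamma_n(\alpha)\subseteq\Sigma_n$: since every block has the same length $n$, each $\iii\in\Sigma(\Gamma_n(\alpha))$ is uniquely parsed into blocks from $\Gamma_n(\alpha)$, and the natural block-Bernoulli measure with weights $(w_\aaa)_{\aaa\in\Gamma_n(\alpha)}$ assigns to a block-cylinder $[\aaa_1\cdots\aaa_\ell]$ mass $\prod_t w_{\aaa_t}$ while that cylinder has $\rho$-diameter $\prod_t r_{\aaa_t}$. Taking $w_\aaa=r_\aaa^{s_n}$, where $s_n$ is the Moran exponent solving $\sum_{\aaa\in\Gamma_n(\alpha)}r_\aaa^{s_n}=1$ (which exists since $t\mapsto\sum_\aaa r_\aaa^t$ is continuous and strictly decreasing once $\Gamma_n(\alpha)\ne\emptyset$, which holds for large $n$ by Lemma~\ref{lemma:P_n-density}), the mass distribution principle gives $\dimh\Sigma(\Gamma_n(\alpha))\ge s_n$. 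Because $t\mapsto\sum_\aaa r_\aaa^t$ is decreasing, it therefore suffices to fix $s=\barf(\alpha)-\varepsilon$ and to exhibit, for some large $n$, the estimate $\sum_{\aaa\in\Gamma_n(\alpha)}r_\aaa^{s}\ge1$, since this forces $s_n\ge s$.

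\textbf{The counting estimate.} To bound the sum from below I would restrict it to a single well-chosen type class. By Lemma~\ref{lemma:P_n-density} applied with $\varepsilon/2$, for all large $n$ there is $\qq=\qq_n\in\PP_{\Gamma_n}(\alpha)$ with $H(\qq_n)/\lambda(\qq_n)\ge\barf(\alpha)-\varepsilon/2$; as $\qq_n\in\PP(\alpha)$, the whole type class $T_{\Gamma_n}(\qq_n)$ is contained in $\Gamma_n(\alpha)$. On this class $r_\aaa=e^{-n\lambda(\qq_n)}$ is constant, and Lemma~\ref{lemma:T_n-cardinality-variant} gives $\#T_{\Gamma_n}(\qq_n)\ge \big(C(n+1)^{m+1}\big)^{-1}e^{nH(\qq_n)}$, whence
\[
\sum_{\aaa\in\Gamma_n(\alpha)}r_\aaa^{s}\;\ge\;\#T_{\Gamma_n}(\qq_n)\,e^{-ns\lambda(\qq_n)}\;\ge\;\frac{1}{C(n+1)^{m+1}}\exp\!\Big(n\big(H(\qq_n)-s\lambda(\qq_n)\big)\Big).
\]
Here $H(\qq_n)-s\lambda(\qq_n)=\lambda(\qq_n)\big(H(\qq_n)/\lambda(\qq_n)-s\big)\ge(-\log r_{\max})(\varepsilon/2)>0$, using that $\lambda(\qq)\ge-\log r_{\max}>0$ for every $\qq\in\PP$. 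Thus the right-hand side tends to $\infty$ with $n$, so for $n$ large enough it is $\ge1$, and then $\dimh\Sigma(\Gamma_n(\alpha))\ge s_n\ge s=\barf(\alpha)-\varepsilon$, as required.

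\textbf{Expected difficulty.} No essential obstacle is expected; the only point demanding care is the clean passage from the Moran exponent to a Hausdorff-dimension lower bound. Cylinders $B(\iii,r)=[\iii|_k]$ of $\Sigma(\Lambda)$ need not end at a multiple of $n$, so in applying the mass distribution principle one truncates $k$ down to the nearest multiple of $n$, which costs only a factor $r_{\min}^{n}$ in the radius comparison $\mu(B(\iii,r))\le C_n r^{s_n}$. The constant $C_n$ depends on $n$, but this is harmless: the lemma only asserts the bound for one fixed (sufficiently large) $n$. One should also record at the outset that $\Gamma_n(\alpha)\ne\emptyset$ for large $n$ and that $\lambda(\qq)$ is bounded below on $\PP$, since these uniformities are exactly what make the exponential growth above independent of the choice of $\qq_n$.
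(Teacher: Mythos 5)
Your proof is correct and follows essentially the same route as the paper: both pick $\qq_n$ via Lemma~\ref{lemma:P_n-density}, restrict the Moran sum to the single type class $T_{\Gamma_n}(\qq_n)\subset\Gamma_n(\alpha)$, invoke Lemma~\ref{lemma:T_n-cardinality-variant} for the cardinality, and let $n\to\infty$. The only cosmetic difference is that you fix $s=\barf(\alpha)-\varepsilon$ and show $\sum_{\aaa\in\Gamma_n(\alpha)}r_\aaa^s\ge1$, whereas the paper bounds the Moran exponent $s_n$ from below directly; these are the same estimate read in two ways.
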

\begin{proof}
  Let $\alpha\in(\alpha_{\min},\alpha_{\max}]$ and let $\varepsilon>0$. By Lemma \ref{lemma:density}, the symbolic space $\Sigma(\Gamma_n(\alpha))$ is non-empty for all large $n$. It is well known that the Hausdorff dimension of $\Sigma(\Gamma_n(\alpha))$ is the unique $s_n$ satisfying
  \begin{equation}\label{eq:sigma-n-dim}
      \sum_{\aaa\in\Gamma_n(\alpha)}r_{\aaa}^{s_n}=1.
  \end{equation}
  Using Lemma \ref{lemma:P_n-density}, for all large enough $n\in\N$, we may pick $\qq_n\in \PP_{\Gamma_n}(\alpha)$ satisfying
  \begin{equation}\label{eq:H-lambda-lim}
    \frac{H(\qq_n)}{\lambda(\qq_n)}\geq \overline{f}(\alpha)-\frac{\varepsilon}{2}.
  \end{equation} 
  We note that $\xii(\aaa)=\qq_n$ for all $\aaa\in T_{\Gamma_n}(\qq_n)$ and that $r_n=r_\aaa$ is also independent of $\aaa\in T_{\Gamma_n}(\qq_n)$ and satisfies
  \begin{equation*}
  -\log r_n=n\lambda(\qq_n)\,.
  \end{equation*}
  We may thus estimate
  \begin{equation*} 1=\sum_{\aaa\in\Gamma_n(\alpha)}r_{\aaa}^{s_n}=\sum_{\qq\in\PP_n(\alpha)}\sum_{\aaa\in T_{\Gamma_n}(\qq)}r_{\aaa}^{s_n}\geq \#T_{\Gamma_n}(\qq_n)r_{n}^{s_n}.
  \end{equation*}
 Defining $t_n$ such that $\#T_{\Gamma_n}(\qq_n)r_n^{t_n}=1$, we obtain a lower bound for $s_n$. By an application of Lemma \ref{lemma:T_n-cardinality-variant},
 we have 
  \begin{align*}
     \dimh\Sigma(\Gamma_n(\alpha)) \geq t_n = \frac{\log \#T_{\Gamma_n}(\qq_n)}{-\log r_{n}}\geq
      \frac{H(\qq_n)}{\lambda(\qq_n)}+\frac{C\log (n+1)^{m-1}}{n\log r_{\max}}\,.
  \end{align*}
By choosing $n\in\N$ large enough, such that $-\frac{\log C(n+1)^{m-1}}{n\log r_{\max}}\leq \frac{\varepsilon}{2}$, and using \eqref{eq:H-lambda-lim} we arrive at the claim. 
\end{proof}

\subsection{Construction of large Moran subsets}
In this section, we derive Theorem \ref{thm:symbolic-multifractal-spectrum} by constructing large Moran subsets inside the level sets $D_{\alpha}^\A$. The following Hausdorff dimension formula is well known (see e.g. \cite[Proposition 3.1]{FengLauWu2002} or \cite[Theorem 4.6]{KaenmakiLiSuomala2016}).
\begin{lemma}\label{prop:moran-dimension}
Let $n_k\in\N$ for all $k\in\N$ and let $\Delta_k\subset\Sigma_{n_k}$. If $n_k\le C k$ for some constant $C<\infty$, then
Then 
\[\dimh(\Delta_1\times\Delta_2\times\cdots)=\liminf_{k\to\infty} s_k\,,\]
  where $s_k$ is the unique number satisfying the equation $\sum_{\iii\in\Delta_1\times\cdots\times\Delta_k}r_{\iii}^{s_k}=1$.    
\end{lemma}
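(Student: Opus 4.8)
The plan is to prove Theorem~\ref{prop:moran-dimension} by the standard two-sided estimate: a Frostman-type lower bound obtained by putting a measure on the Moran set $\Delta_1\times\Delta_2\times\cdots$ whose $s$-energy we control, and an upper bound from the natural covering by cylinders of generation $n_1+\cdots+n_k$. Throughout I would write $\Delta=\Delta_1\times\Delta_2\times\cdots\subset\Sigma$, $m_k=n_1+\cdots+n_k$, and for $\iii\in\Delta_1\times\cdots\times\Delta_k$ let $[\iii]$ be the corresponding cylinder of length $m_k$; note these cylinders, as $\iii$ ranges over $\Delta_1\times\cdots\times\Delta_k$, are exactly the level-$k$ cylinders of $\Delta$, and by \eqref{eq:balls-are-cylinders} every ball $B(\xii,r)$ with $\xii\in\Delta$ equals $[\xii|_N]$ for the appropriate $N$. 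The extra hypothesis $n_k\le Ck$ is what forces $m_k$ to grow only polynomially in $k$ (in fact $m_k\le Ck^2$), which will be used to absorb combinatorial error terms of the form $(\#\Delta_1\cdots\#\Delta_k)^{o(1)}$ when passing from level-$k$ cylinders to balls of intermediate radius.

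For the upper bound, fix $\eps>0$ and pick a subsequence $k_j$ along which $s_{k_j}\to \liminf_k s_k=:s$. For each $k$, the collection $\{[\iii]:\iii\in\Delta_1\times\cdots\times\Delta_k\}$ covers $\Delta$, and by the defining equation $\sum_{\iii}r_\iii^{s_k}=1$, so $\sum_\iii r_\iii^{s_k+\eps}\le (\max_\iii r_\iii)^\eps\to 0$ as $k\to\infty$ since $\max_{\iii}r_\iii\le r_{\max}^{m_k}\to0$. Hence $\HH^{s+\eps}(\Delta)=0$ along $k_j$, giving $\dimh\Delta\le s+\eps$; let $\eps\downarrow0$.

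For the lower bound — which I expect to be the main obstacle — define a probability measure $\mu$ on $\Delta$ by declaring, for $\iii\in\Delta_1\times\cdots\times\Delta_k$, that $\mu([\iii])=r_\iii^{s_k}$ normalised appropriately; more precisely one builds $\mu$ as a Kolmogorov extension choosing, at each generation, weights proportional to $r_\iii^{s_k}$ within the block, so that $\mu([\iii])$ is comparable to $r_\iii^{s_k}\le r_\iii^{\,s-\eps}$ for all $k\ge$ some $k_0(\eps)$ (using $s_k\ge s-\eps$ eventually). The content is a mass distribution / Frostman argument: for $\xii\in\Delta$ and small $r>0$, write $B(\xii,r)=[\xii|_N]$; choose $k$ with $m_{k-1}< N\le m_k$, so $[\xii|_N]$ lies between a level-$(k-1)$ and a level-$k$ cylinder of $\Delta$. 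The number of level-$k$ cylinders of $\Delta$ inside $[\xii|_{m_{k-1}}]$ that meet $[\xii|_N]$ is at most $\#\Delta_k$, and each has $\mu$-mass at most $C\,r_{\xii|_{m_{k-1}}}^{\,s-\eps}$ times a factor coming from the within-block distribution; bounding crudely, $\mu(B(\xii,r))\le \#\Delta_k\cdot C\, r_{\xii|_{m_{k-1}}}^{\,s-\eps}$. One then relates $r_{\xii|_{m_{k-1}}}$ to $r=r_{\xii|_N}$ via $r_{\xii|_{m_{k-1}}}\le r_{\min}^{-(n_k)} r \le r_{\min}^{-Ck} r$ and absorbs both $\#\Delta_k\le m^{n_k}\le m^{Ck}$ and $r_{\min}^{-Ck}$ into a factor that is subpolynomial in $1/r$ — here is exactly where $n_k\le Ck$ (hence $k\lesssim\sqrt{m_k}\lesssim\sqrt{\log(1/r)}$) is essential, since $m^{Ck}=\exp(Ck\log m)=\exp(O(\sqrt{\log(1/r)}))=(1/r)^{o(1)}$. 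Consequently $\mu(B(\xii,r))\le r^{\,s-2\eps}$ for all sufficiently small $r$, so by the mass distribution principle $\dimh\Delta\ge s-2\eps$; letting $\eps\downarrow0$ finishes the proof. The delicate point to get right is the bookkeeping that turns the block-wise combinatorial factors and contraction-ratio distortions into a genuinely $o(1)$ exponent, and this is precisely what the growth restriction $n_k\le Ck$ buys us; without it the Moran dimension formula can fail.
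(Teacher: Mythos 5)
The paper does not prove this lemma; it cites it as known from its two references, so there is no in-paper argument to compare against and I can only assess your proposal on its own terms. Your upper bound is fine. The lower bound contains a concrete error at the step you single out as the crux: the implication ``$n_k\le Ck$, hence $k\lesssim\sqrt{m_k}$'' runs in the wrong direction. From $n_k\le Ck$ one gets $m_k=n_1+\cdots+n_k\le Ck(k+1)/2$, i.e.\ $k\gtrsim\sqrt{m_k}$, and the only lower bound on $m_k$ available from the hypotheses is $m_k\ge k$ (since each $n_j\ge 1$). If, say, $n_j=1$ for all $j$ outside a sparse set on which $n_j\approx Cj$, then $m_{k-1}\approx k$ at those bad levels $k$, and the factor $m^{Ck}\cdot r_{\min}^{-Ck}$ you need to absorb is of order $\exp(\Theta(\log(1/r)))$ --- a genuine power of $1/r$, not $(1/r)^{o(1)}$. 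The absorption step therefore needs the additional lower bound $n_j\gtrsim j$ (so that $m_k\gtrsim k^2$), which the stated hypothesis does not provide.

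A secondary gap is the measure construction. Assigning block-$j$ weights proportional to $r_\aaa^{s_j}$ for $\aaa\in\Delta_j$ and taking a Kolmogorov extension produces $\mu([\aaa_1\cdots\aaa_k])=\prod_{j=1}^k r_{\aaa_j}^{s_j}/Z_j$ with $Z_j=\sum_{\aaa\in\Delta_j}r_\aaa^{s_j}$. Since the exponents $s_j$ vary with $j$, neither $\prod_j Z_j$ nor $\prod_j r_{\aaa_j}^{\,s_j-s_k}$ is controlled, so there is no reason for $\mu([\aaa_1\cdots\aaa_k])$ to be comparable to $r_{\aaa_1\cdots\aaa_k}^{s_k}$ uniformly in $k$. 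The standard remedy is to fix a single exponent $s<\liminf_k s_k$ and set $\mu([\aaa_1\cdots\aaa_k])=r_{\aaa_1\cdots\aaa_k}^{s}/\lambda_k$ with $\lambda_k=\sum_{\bbb\in\Delta_1\times\cdots\times\Delta_k}r_\bbb^s\ge 1$; this is consistent across levels and gives $\mu\le r_{\aaa_1\cdots\aaa_k}^s$ at the block depths $m_k$. But then the burden falls entirely on the intermediate scales $m_{k-1}<N<m_k$, and the crude chain $\mu(B(\iii,r))\le\mu([\iii|_{m_{k-1}}])\le r_{\iii|_{m_{k-1}}}^s\le r^s\,r_{\min}^{-sn_k}$ discards both the normaliser $\lambda_{k-1}$ and the conditional factor by which $\mu$ drops between depth $m_{k-1}$ and depth $N$. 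This discarded amount is precisely what the growth hypothesis has to compensate for, and, as explained above, the bare condition $n_k\le Ck$ is not strong enough to do so.
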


Our aim is now, for every $\alpha\in(\alpha_{\min},\alpha_{\max}]$ and $\varepsilon>0$, to construct a Moran subset $\Omega_\varepsilon(\alpha)\subset D_{\alpha}^\A$, which satisfies $\dimh \Omega_\varepsilon(\alpha)\geq \barf(\alpha)-\varepsilon$. Going forward, for a given $\Gamma\subset \Sigma_n$, we let $\alpha_{\min}(\Gamma)=\min_{\aaa\in\Gamma}\frac{\log p_\aaa}{\log r_\aaa}$ and $\alpha_{\max}(\Gamma)=\max_{\aaa\in\Gamma}\frac{\log p_\aaa}{\log r_\aaa}$. The following simple lemma will be useful in ensuring that the Moran subsets can be constructed via abundant sequences.
\begin{lemma}\label{lemma:exists-point-with-dim}
  Let $\nu$ be the Bernoulli measure associated to the probability vector $\pp$, $n\in\N$, $\Gamma\subset\Sigma_n$ and $\alpha\in [\alpha_{\min}(\Gamma),\alpha_{\max}(\Gamma)]$. Then, there exists $\iii\in\Sigma(\Gamma)$, such that
  \begin{equation*}
    \dima(\nu,\iii)=\alpha.
  \end{equation*}
\end{lemma}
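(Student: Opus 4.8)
The plan is to realise the target value $\alpha$ as the $\limsup$ in Lemma \ref{lemma:explicit-assouad} by stringing together, in carefully chosen proportions, the two extremal words of $\Gamma$ that attain $\alpha_{\min}(\Gamma)$ and $\alpha_{\max}(\Gamma)$. Fix finite words $\aaa_-,\aaa_+\in\Gamma\subset\Sigma_n$ with $\frac{\log p_{\aaa_-}}{\log r_{\aaa_-}}=\alpha_{\min}(\Gamma)$ and $\frac{\log p_{\aaa_+}}{\log r_{\aaa_+}}=\alpha_{\max}(\Gamma)$. For a finite concatenation $\aaa$ of $N$ blocks, $k$ of which equal $\aaa_+$ and $N-k$ of which equal $\aaa_-$, the ratio $\frac{\log p_\aaa}{\log r_\aaa}$ is a weighted average of $\alpha_{\min}(\Gamma)$ and $\alpha_{\max}(\Gamma)$ with weights proportional to $(N-k)\log r_{\aaa_-}$ and $k\log r_{\aaa_+}$; as $k/N$ ranges over $[0,1]$ this average sweeps continuously through every value in $[\alpha_{\min}(\Gamma),\alpha_{\max}(\Gamma)]$, in particular through $\alpha$.

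The construction is then an interleaving/exhaustion argument. Build $\iii\in\Sigma(\Gamma)$ as an infinite concatenation of blocks from $\{\aaa_-,\aaa_+\}$ in stages: in stage $j$ append a long run (of length $L_j$ blocks, with $L_j\to\infty$ fast, say $L_j\gg L_1+\cdots+L_{j-1}$) whose proportion of $\aaa_+$-blocks is chosen so that the subword consisting of \emph{that run alone} has ratio within $1/j$ of $\alpha$. Because within a single long run the "running average'' ratio is, up to an $O(n/(\text{length}))$ error, monotone between $\alpha_{\min}(\Gamma)$ and $\alpha_{\max}(\Gamma)$, every long subword $\aaa\sqsubset\iii$ that lies inside one stage has $\frac{\log p_\aaa}{\log r_\aaa}\le\alpha_{\max}(\Gamma)+o(1)$, and subwords straddling a stage boundary are likewise controlled since each stage is a convex-combination block. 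The rapid growth of $L_j$ guarantees that the stage-$j$ run, taken from its start, dominates everything before it, so that $\limsup_{|\aaa|\to\infty}\frac{\log p_\aaa}{\log r_\aaa}\ge\alpha$; conversely no arrangement of $\aaa_\pm$-blocks can push the ratio above $\alpha_{\max}(\Gamma)$ by more than a vanishing amount, and one arranges the stages so the $\limsup$ does not exceed $\alpha$. By Lemma \ref{lemma:explicit-assouad}, $\dima(\nu,\iii)=\alpha$.

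The main obstacle is the bookkeeping to ensure the $\limsup$ equals $\alpha$ \emph{exactly} rather than merely lying in $[\alpha,\alpha_{\max}(\Gamma)]$: one must prevent long subwords that are mostly $\aaa_+$-blocks (from a stage with $k/N$ large) from producing a persistent value above $\alpha$. This is handled by a "self-correcting'' scheme — after each high-ratio stretch insert a compensating stretch so that any sufficiently long window has average proportion of $\aaa_+$ close to the value giving ratio $\alpha$ — together with the observation that windows of length $o(L_j)$ contribute only finitely often to the $\limsup$ and windows of length comparable to or larger than $L_j$ see the corrected average. Keeping the block lengths $L_j$ growing but with $\sum_{i<j}L_i = o(L_j)$ makes both the "$\ge\alpha$'' and the "$\le\alpha$'' estimates work simultaneously, which is precisely the delicate point. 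Everything else is the elementary computation that $\frac{\log p_\aaa}{\log r_\aaa}$ of a concatenation is the indicated weighted mean, plus an appeal to Lemma \ref{lemma:explicit-assouad}.
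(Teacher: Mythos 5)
Your plan is conceptually on the right track and rests on the same core idea as the paper's proof: build $\iii$ out of the two extremal words of $\Gamma$ (which attain $\alpha_{\min}(\Gamma)$ and $\alpha_{\max}(\Gamma)$) and tune the proportions so that the $\limsup$ in Lemma~\ref{lemma:explicit-assouad} equals $\alpha$. However, the paper implements this with a single, fully specified greedy rule, while your write-up replaces it with a two-layer machine (rapidly growing stages with prescribed block proportions \emph{plus} an unspecified ``self-correcting scheme'') and leaves precisely the decisive step as a black box. The paper's rule is: start with $\aaa_{\max}$, and at each step append $\aaa_{\max}$ if the current prefix ratio $\tfrac{\log p_{\iii|_k}}{\log r_{\iii|_k}}$ is $<\alpha$ and append $\aaa_{\min}$ otherwise. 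This \emph{is} the self-correcting scheme you gesture at, and it has a clean consequence you would need anyway: writing $P_k=\log p_{\iii|_k}$ and $R_k=\log r_{\iii|_k}$, the quantity $P_k-\alpha R_k$ stays bounded (the rule flips the sign of the increment whenever the quantity crosses $0$). Boundedness of $P_k-\alpha R_k$ is exactly what controls \emph{all} long subwords, not just prefixes: for a subword running from position $j_1$ to $j_2$ one has
\begin{equation*}
\frac{\log p_\aaa}{\log r_\aaa}=\frac{P_{j_2}-P_{j_1}}{R_{j_2}-R_{j_1}}=\alpha+\frac{(P_{j_2}-\alpha R_{j_2})-(P_{j_1}-\alpha R_{j_1})}{R_{j_2}-R_{j_1}}=\alpha+O\!\left(\frac{1}{j_2-j_1}\right),
\end{equation*}
with the usual $O(n)$ correction for misalignment with block boundaries. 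This gives both bounds of the $\limsup$ at once. By contrast, your staged construction as described does not by itself prevent long high-$\aaa_+$-density subwords \emph{inside} a stage (you acknowledge this), and the ``self-correcting'' patch you propose, once made precise, collapses to something equivalent to the greedy rule — at which point the stages and the $L_j\gg L_1+\cdots+L_{j-1}$ growth condition are unnecessary scaffolding. So: same idea, but the paper's greedy construction is the streamlined and complete version; I recommend you replace the staged scheme with it and verify $\dima(\nu,\iii)=\alpha$ via the boundedness of $P_k-\alpha R_k$ as above.
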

\begin{proof}
  To ease notation, we denote by $\aaa_{\min}$ the symbol in $\Gamma$ minimising the ratio $\frac{\log p_{\aaa}}{\log r_{\aaa}}$ and by $\aaa_{\max}$ the symbol maximising the same ratio. Then $\frac{\log p_{\aaa_{\min}}}{\log r_{\aaa_{\min}}}\le \alpha\le\frac{\log p_{\aaa_{\max}}}{\log r_{\aaa_{\max}}}$.
  We define $\iii$ inductively by setting $i_0={\aaa_{\max}}$ and
  \begin{equation*}
      i_{n+1}=\begin{cases}
          \aaa_{\max}&\text{ if }\frac{\log p_{\iii|_n}}{\log r_{\iii|_n}}<\alpha\,,\\
          \aaa_{\min}&\text{ if }\frac{\log p_{\iii|_n}}{\log r_{\iii|_n}}\ge\alpha\,.
      \end{cases}
  \end{equation*}
  Using Lemma \ref{lemma:explicit-assouad}, it is easy to check that $\dima(\nu,\iii)=\alpha$.  
  \end{proof}
Let $\varepsilon>0$ and $\alpha\in(\alpha_{\min},\alpha_{\max})$. We start by picking $n\in\N$ so that $\alpha_{\min}(\Gamma_n)<\alpha<\alpha_{\max}(\Gamma_n)$. Note that this is possible by \ref{it:abundant-2} since the value $\frac{\log p_\aaa}{\log r_\aaa}$ is a continuous function of the type of $\aaa$. Using Lemma \ref{lemma:exists-point-with-dim}, we then pick $\iii\in\Sigma(\Gamma_n)$ such that $\dim_A(\nu,\iii)=\alpha$. By making $n$ larger if necessary, by Lemma \ref{lemma:sub-level-dimension} we may assume that
\begin{equation*}
  \dimh \Sigma(\Gamma_n(\alpha))\geq\barf(\alpha)-\frac{\varepsilon}{2}.
\end{equation*}
Denote by $s\coloneqq\barf(\alpha)-\varepsilon<\dimh \Sigma(\Gamma_n(\alpha))$, and note that
\begin{equation*}
  \sum_{\aaa\in\Gamma_n(\alpha)}r_{\aaa}^s>1,
\end{equation*}
by \eqref{eq:sigma-n-dim}. Since $r_{\iii|_k}\ge c^k$ for some constant $c>0$, we may choose $M_k\le C k\in\N$ such that
\begin{equation}\label{eq:Mk}
  \Bigg(\sum_{\aaa\in\Gamma_n(\alpha)}r_{\aaa}^s\Bigg)^{M_k} r_{\iii|_k}^s>1.
\end{equation}
Now we define
\begin{equation}\label{eq:Omega-construction}
  \Omega_\varepsilon(\alpha)= \Gamma_n(\alpha)^{M_1}\times\{\iii|_n\}\times \Gamma_n(\alpha)^{M_2}\times\{\iii|_{2n}\}\times \Gamma_n(\alpha)^{M_3}\times\{\iii|_{3n}\}\times\cdots\,.
\end{equation}
and note that $\Omega_\varepsilon(\alpha)\subset\Sigma(\Gamma_n)$ since $\iii\in\Sigma(\Gamma_n)$ and $\Gamma_n(\alpha)\subset\Gamma_n$.
The next lemma ensures that $\Omega_\varepsilon(\alpha)$ has the properties we are after.

\begin{lemma}\label{lemma:Omega-properties}
  For all $\alpha\in(\alpha_{\min},\alpha_{\max})$ and $\varepsilon>0$, the set $\Omega_\varepsilon(\alpha)$ satisfies
  \begin{enumerate}
    \item $\Omega_\varepsilon(\alpha)\subset D_\alpha^\A$,
    \item $\dimh\Omega_\varepsilon(\alpha)\geq\barf(\alpha)-\varepsilon$.
  \end{enumerate}
  
\end{lemma}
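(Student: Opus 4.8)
The plan is to prove the two assertions of Lemma \ref{lemma:Omega-properties} separately, relying on the construction \eqref{eq:Omega-construction} and the lemmas already established.

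\textbf{Part (2): the dimension lower bound.} This should be the easier part. The set $\Omega_\varepsilon(\alpha)$ is a Moran set of the form $\Delta_1\times\Delta_2\times\cdots$, where each $\Delta_j$ is either a block of the form $\Gamma_n(\alpha)^{M_k}$ or a singleton $\{\iii|_{kn}\}$. Collapsing consecutive blocks, we may write $\Omega_\varepsilon(\alpha)$ as a product over levels whose word-lengths grow at most linearly in the level index (since $M_k\le Ck$ and the singletons contribute bounded length), so Lemma \ref{prop:moran-dimension} applies. Thus $\dimh\Omega_\varepsilon(\alpha)=\liminf_k s_k$ where $\sum r_\jjj^{s_k}=1$ over the $k$-th partial product. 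The point of the choice \eqref{eq:Mk} of $M_k$ is precisely that inserting the singleton $\{\iii|_{kn}\}$ does not kill the mass estimate: with $s=\barf(\alpha)-\varepsilon$ we have $\big(\sum_{\aaa\in\Gamma_n(\alpha)}r_\aaa^s\big)^{M_k}r_{\iii|_{kn}}^s>1$, and since $\sum_{\aaa\in\Gamma_n(\alpha)}r_\aaa^s>1$ as well, the full partial product up to any level also has mass $>1$, forcing $s_k\ge s$ for every $k$. Hence $\dimh\Omega_\varepsilon(\alpha)\ge s=\barf(\alpha)-\varepsilon$, as claimed. (One must be slightly careful to set up the partial products so that the singleton factors are always included together with the block preceding them, but this is routine bookkeeping.)

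\textbf{Part (1): $\Omega_\varepsilon(\alpha)\subset D_\alpha^\A$.} Here we must show $\dima(\nu,\jjj)=\alpha$ for every $\jjj\in\Omega_\varepsilon(\alpha)$. The upper bound $\dima(\nu,\jjj)\le\alpha$ follows essentially as in Lemma \ref{lemma:subset}: any long finite subword $\aaa\sqsubset\jjj$ decomposes, up to bounded-length prefixes and suffixes, into a concatenation of blocks from $\Gamma_n(\alpha)$ and copies of the fixed word $\iii|_{kn}$; but $\iii\in\Sigma(\Gamma_n)$, and more to the point the words $\iii|_{kn}$ are themselves concatenations of words in $\Gamma_n$ and do not necessarily lie in $\Gamma_n(\alpha)$ — so unlike in Lemma \ref{lemma:subset} the inserted singletons could a priori push the ratio $H_\pp(\xii(\aaa))/\lambda(\xii(\aaa))$ above $\alpha$. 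The resolution is that the inserted blocks $\{\iii|_{kn}\}$ have length $kn$, which is negligible compared to the total length $\sum_{j\le k}M_j n\sim C'k^2 n$ of the accumulated $\Gamma_n(\alpha)$-blocks before level $k$; so their relative contribution to the numerator and denominator of $H_\pp(\xii(\aaa))/\lambda(\xii(\aaa))$ tends to $0$, and the $\limsup$ over $n$ in Lemma \ref{lemma:explicit-assouad} is still $\le\alpha$.

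\textbf{The main obstacle} will be the matching lower bound $\dima(\nu,\jjj)\ge\alpha$. Upper bounds on pointwise Assouad dimension come "for free" from controlling all subwords, but to get $\dima(\nu,\jjj)\ge\alpha$ we need to exhibit, for arbitrarily large scales, subwords $\aaa\sqsubset\jjj$ with $H_\pp(\xii(\aaa))/\lambda(\xii(\aaa))$ arbitrarily close to $\alpha$. This is exactly why the word $\iii$ with $\dima(\nu,\iii)=\alpha$ (produced by Lemma \ref{lemma:exists-point-with-dim}) was built into the construction: every $\jjj\in\Omega_\varepsilon(\alpha)$ contains each prefix $\iii|_{kn}$ as a subword, in a fixed location. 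By Lemma \ref{lemma:explicit-assouad} applied to $\iii$ itself, there are arbitrarily long subwords $\aaa\sqsubset\iii$ realising ratios approaching $\alpha$; but such an $\aaa$ is a subword of some $\iii|_{kn}$, hence a subword of $\jjj$. Feeding these $\aaa$ back into Lemma \ref{lemma:explicit-assouad} for $\jjj$ gives $\dima(\nu,\jjj)\ge\alpha$. Combined with the upper bound, $\dima(\nu,\jjj)=\alpha$, so $\jjj\in D_\alpha^\A$. The care needed is only in matching up "$\aaa$ is a subword of $\iii$ of length $\ell$" with "$\aaa$ occurs inside the factor $\iii|_{kn}$ of $\jjj$ once $kn\ge$ (position of $\aaa$ in $\iii$) $+\ell$", which holds for all large $k$ and hence at arbitrarily large scales in $\jjj$.
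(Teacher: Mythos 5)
Your treatment of part~(2) and of the lower bound $\dima(\nu,\jjj)\ge\alpha$ in part~(1) is correct and follows essentially the same route as the paper: the lower bound comes from the fact that every $\aaa\sqsubset\iii$ eventually appears inside some prefix block $\iii|_{kn}$, hence inside $\jjj$, and the dimension bound comes from Lemma~\ref{prop:moran-dimension} together with the fact that condition~\eqref{eq:Mk} forces $t_k\ge s=\barf(\alpha)-\varepsilon$ for all $k$. (Minor slip: the singleton factors $\{\iii|_{kn}\}$ have length $kn$, which is not bounded; what matters for Lemma~\ref{prop:moran-dimension} is that the combined level length $M_kn+kn$ grows at most linearly, which does hold.)

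There is, however, a genuine gap in your proof of the upper bound $\dima(\nu,\jjj)\le\alpha$. You resolve the worry about the inserted singleton factors by claiming that their ``relative contribution'' to $H_\pp(\xii(\aaa))/\lambda(\xii(\aaa))$ tends to~$0$. This is not correct, for two reasons. First, the comparison you make is between a single block $\iii|_{kn}$ (length $kn$) and the accumulated $\Gamma_n(\alpha)$-blocks (length $\sim k^2 n$); but a long subword $\aaa$ picks up \emph{all} the intervening singleton factors, whose total length $\sum_{j\le k} jn \sim k^2n/2$ is of the same order as the $\Gamma_n(\alpha)$ contribution, so it is not negligible. Second, and more fundamentally, a subword $\aaa\sqsubset\jjj$ with $|\aaa|$ large may lie \emph{entirely} inside a single $\iii|_{kn}$ block for large $k$; then the decomposition ``up to bounded-length prefixes and suffixes, into a concatenation of $\Gamma_n(\alpha)$-blocks and copies of $\iii|_{kn}$'' simply does not exist, and there is nothing to compare against. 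The missing ingredient — which the paper uses — is that $\dima(\nu,\iii)=\alpha$ read through Lemma~\ref{lemma:explicit-assouad} gives a \emph{uniform upper bound}: for any $\delta>0$ and all sufficiently long subwords $\bbb\sqsubset\iii$ one has $H_\pp(\xii(\bbb))/\lambda(\xii(\bbb))\le\alpha+\delta$. With this in hand, decompose $\aaa$ into pieces lying in $\Gamma_n(\alpha)$-blocks (ratio $\le\alpha$) and pieces lying in $\iii|_{kn}$-blocks (each a subword of $\iii$, so ratio $\le\alpha+\delta$ when long, and contributing a total bounded error when short), and conclude by the mediant inequality as in Lemma~\ref{lemma:subset}. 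Without the uniform control on subwords of $\iii$, the upper bound does not go through.
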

\begin{proof}
  First we show that $\Omega_n(\alpha)\subset D_\alpha^\A$. Let $\iii$ be as defined above. If $\jjj\in\Omega_n(\alpha)$, then it follows from Lemma \ref{lemma:explicit-assouad}, that $\dima(\nu,\jjj)\geq \alpha$, since for every $\aaa\sqsubset\iii$, we have $\aaa\sqsubset\jjj$ by construction, and since $\dima(\nu,\iii)=\alpha$. The upper bound $\dima(\nu,\jjj)\le\alpha$ also follows similarly to Lemma \ref{lemma:subset} since for any $\bbb\sqsubset\jjj$ the word $\bbb$ is 'almost' in $\Gamma_n(\alpha)$: It is a finite concatenation of longer and longer words in $\Gamma_n(\alpha)$ and of the subwords of $\iii$, where $\dima(\nu,\iii)=\alpha$. This completes the proof of the first claim.

  We derive the second claim using Lemma \ref{prop:moran-dimension}.
  The Hausdorff dimension of $\Omega_n(\alpha)$ is given by $\liminf t_k$ where $t_k$ is the unique number satisfying
  \begin{equation}\label{eq:sk}
\prod_{j=1}^{k}\sum_{\aaa\in\Gamma_n(\alpha)^{M_j}}r_{\aaa}^{t_k}r_{\iii|_j}^{t_k}=1.
  \end{equation}
    By \eqref{eq:Mk}, for every $k\in\N$, we have
  \begin{equation*}
\prod_{j=1}^{k}\sum_{\aaa\in\Gamma_n(\alpha)^{M_j}}r_{\aaa}^{s}r_{\iii|_j}^{s}=\prod_{j=1}^{k}\Bigg(\sum_{\aaa\in\Gamma_n(\alpha)}r_{\aaa}^{s}\Bigg)^{M_j}r_{\iii|_j}^{s}>1.
  \end{equation*}
  Combining this with \eqref{eq:sk} and the observation that the map
  \begin{equation*}
    t\mapsto \prod_{j=1}^{k}\sum_{\aaa\in\Gamma_n(\alpha)^{M_j}}r_{\aaa}^{t}r_{\iii|_j}^{t}
  \end{equation*}
  is decreasing for all $k\in\N$, it follows that $t_k>s$ for all $k\in\N$, and therefore
  \begin{equation*}
    \dimh\Omega_n(\alpha)=\liminf_{k\to\infty}t_k\geq s=\barf(\alpha)-\varepsilon\,,
  \end{equation*}
  as required.
\end{proof}

To achieve the full multifractal spectrum, we need to treat the case when $\alpha=\alpha_{\min}$ separately. For this, let $\Lambda_{\min}=\{i\in\Lambda\colon \frac{\log p_i}{\log r_i}=\min_{i\in\Lambda}\frac{\log p_i}{\log r_i}\}$. The following lemma follows immediately from Lemma \ref{lemma:explicit-assouad} and Theorem \ref{thm:classical-formalism} for $\alpha=\alpha_{\min}$.
\begin{lemma}\label{lemma:alpha_min-lower-bound}
  We have $\Sigma(\Lambda_{\min})\subset D_{\alpha_{\min}}^\A$ and $\dimh\Sigma(\Lambda_{\min})=\barf(\alpha_{\min})$.
\end{lemma}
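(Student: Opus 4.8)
The plan is to verify the two assertions separately, each of which follows by combining the explicit description of the pointwise Assouad dimension from Lemma \ref{lemma:explicit-assouad} with the classical multifractal formalism of Theorem \ref{thm:classical-formalism}. First I would record the elementary observation that for any finite word $\aaa$ built only from symbols in $\Lambda_{\min}$ one has $\frac{\log p_\aaa}{\log r_\aaa}=\alpha_{\min}$ exactly; indeed $\log p_\aaa=\sum_{i\in\Lambda}N_i(\aaa)\log p_i$ and $\log r_\aaa=\sum_{i\in\Lambda}N_i(\aaa)\log r_i$, and since $N_i(\aaa)=0$ for $i\notin\Lambda_{\min}$ while $\frac{\log p_i}{\log r_i}=\alpha_{\min}$ for $i\in\Lambda_{\min}$, the ratio telescopes to $\alpha_{\min}$. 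Thus for every $\iii\in\Sigma(\Lambda_{\min})$ and every subword $\aaa\sqsubset\iii$ we have $\frac{\log p_\aaa}{\log r_\aaa}=\alpha_{\min}$, and Lemma \ref{lemma:explicit-assouad} gives $\dima(\nu,\iii)=\alpha_{\min}$, i.e. $\iii\in D_{\alpha_{\min}}^\A$. This proves the inclusion $\Sigma(\Lambda_{\min})\subset D_{\alpha_{\min}}^\A$.

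For the dimension identity, the containment $\Sigma(\Lambda_{\min})\subset D_{\alpha_{\min}}^\A$ is not by itself enough, so the idea is to compare with the classical level set $D_{\alpha_{\min}}^\loc(\nu)$. On the one hand, every $\iii\in\Sigma(\Lambda_{\min})$ also satisfies $\dimloc(\nu,\iii)=\lim_{n}\frac{\log p_{\iii|_n}}{\log r_{\iii|_n}}=\alpha_{\min}$ by the same telescoping computation applied to the prefixes $\iii|_n$, together with \eqref{eq:balls-are-cylinders}; hence $\Sigma(\Lambda_{\min})\subset D_{\alpha_{\min}}^\loc(\nu)$, and Theorem \ref{thm:classical-formalism} yields $\dimh\Sigma(\Lambda_{\min})\leq\dimh D_{\alpha_{\min}}^\loc(\nu)=f(\alpha_{\min})=\barf(\alpha_{\min})$ (recall $\barf=f$ on $[\alpha_{\min},\alpha(0)]$). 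For the reverse inequality I would compute $\dimh\Sigma(\Lambda_{\min})$ directly: it is the self-similar set of the sub-IFS indexed by $\Lambda_{\min}$, so by the standard formula its Hausdorff dimension is the unique $s$ with $\sum_{i\in\Lambda_{\min}}r_i^s=1$. It then remains to check that this $s$ equals $\barf(\alpha_{\min})=\lim_{\alpha\downarrow\alpha_{\min}}f(\alpha)$. Using the analytic expression \eqref{eq:mf_expl} and letting $q\to+\infty$ (which drives $\alpha(q)\to\alpha_{\min}$): writing $\oomega^{(q)}=(p_i^q r_i^{\tau(q)})_i$, the mass concentrates on the coordinates $i\in\Lambda_{\min}$ because these maximise $p_i^q r_i^{\tau(q)}$ asymptotically—equivalently, for $i\in\Lambda_{\min}$ one has $p_i^q r_i^{\tau(q)}=r_i^{q\alpha_{\min}+\tau(q)}$ with a common exponent, while for $i\notin\Lambda_{\min}$ the exponent is strictly larger, so those terms are negligible. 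Passing to the limit in $\sum_i p_i^q r_i^{\tau(q)}=1$ forces $q\alpha_{\min}+\tau(q)\to s$ with $\sum_{i\in\Lambda_{\min}}r_i^{s}=1$, and the formula \eqref{eq:mf_expl} for $f(\alpha(q))$ simplifies in the limit to exactly this same $s$. Hence $\barf(\alpha_{\min})=s=\dimh\Sigma(\Lambda_{\min})$.

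The only slightly delicate point—what I would flag as the main obstacle—is the limiting computation showing $\lim_{q\to\infty}f(\alpha(q))=s$ where $s$ solves $\sum_{i\in\Lambda_{\min}}r_i^s=1$; one must be careful that $\tau(q)$ has a genuine linear asymptote $\tau(q)\sim -\alpha_{\min}q+s$ as $q\to\infty$ and that the "non-minimal" terms in both the defining equation for $\tau$ and in \eqref{eq:mf_expl} vanish in the limit at the right rate. This is a routine but not entirely trivial convergence argument; it can alternatively be bypassed by invoking the already-quoted fact that $\dimh D_{\alpha_{\min}}^\loc(\nu)=\barf(\alpha_{\min})$ from Theorem \ref{thm:classical-formalism} together with a matching lower bound $\dimh\Sigma(\Lambda_{\min})\ge\dimh D_{\alpha_{\min}}^\loc(\nu)$, the latter because any $\iii$ with $\dimloc(\nu,\iii)=\alpha_{\min}$ must, by a pigeonhole/large-deviations argument on the frequencies of non-minimal symbols, have asymptotic frequency of $\Lambda\setminus\Lambda_{\min}$ equal to zero, placing $D_{\alpha_{\min}}^\loc(\nu)$ inside a set whose dimension is controlled by that of $\Sigma(\Lambda_{\min})$; I would use whichever of these two routes is cleanest given the conventions already fixed in Section \ref{sec:classical}.
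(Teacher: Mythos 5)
Your proposal is correct and follows essentially the same route as the paper, which simply declares that the lemma ``follows immediately from Lemma \ref{lemma:explicit-assouad} and Theorem \ref{thm:classical-formalism} for $\alpha=\alpha_{\min}$'': your first paragraph is exactly the use of Lemma \ref{lemma:explicit-assouad}, and your $q\to\infty$ computation (showing $q\alpha_{\min}+\tau(q)\to s$ where $\sum_{i\in\Lambda_{\min}}r_i^s=1$, and that the non-minimal terms in \eqref{eq:mf_expl} vanish) is precisely the endpoint content of Theorem \ref{thm:classical-formalism} that the paper invokes. One small caveat: the ``alternative route'' you sketch at the end, deducing $\dimh\Sigma(\Lambda_{\min})\ge\dimh D^{\loc}_{\alpha_{\min}}(\nu)$ from a pigeonhole/large-deviations containment, is shakier than you suggest (that containment is not clearly true and would in any case lead back to the same endpoint asymptotics), so you should stick with your primary computation.
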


We may now conclude with the proof of Theorem \ref{thm:symbolic-multifractal-spectrum} yielding the full multifractal spectrum for the pointwise Assouad dimension of Bernoulli measures.  
\begin{proof}[Proof of Theorem \ref{thm:symbolic-multifractal-spectrum}]
  For $n\in\N$, we let $\Gamma_n=\Sigma_n$. Then $(\Gamma_n)_{n\in\N}$ trivially is an abundant sequence. Let $\alpha\in[\alpha_{\min},\alpha_{\max}]$. Since $D_\alpha^\A\subset U_{\alpha}^\A\subset U_\alpha^\loc$, the upper bound follows from Theorem \ref{thm:classical-formalism}. For $\alpha=\alpha_{\min}$, the lower bound follows from Lemma \ref{lemma:alpha_min-lower-bound} and for $\alpha\in(\alpha_{\min},\alpha_{\max})$ and $\varepsilon>0$, Lemma \ref{lemma:Omega-properties} implies that
  \begin{equation*}
    \dimh D_\alpha^\A\geq \dimh \Omega_\varepsilon(\alpha)\geq \barf(\alpha)-\varepsilon\,.
  \end{equation*}
  Finally, for $\alpha=\alpha_{\max}$, we may define $\Omega_{\varepsilon}(\alpha)$ using the word $\iii=i_{\max}i_{\max}\ldots$ in \eqref{eq:Omega-construction}, where $i_{\max}$ is the symbol in $\Lambda$ maximising $\frac{\log p_i}{\log r_i}$. Note that in this case, it is no longer necessarily true that $\Omega_\varepsilon(\alpha)\subset\Sigma(\Gamma_n)$, but nevertheless Lemma \ref{lemma:Omega-properties} goes through unchanged and the proof finishes as before. Since $\frac{\log p_{\aaa}}{\log r_{\aaa}}\in[\alpha_{\min},\alpha_{\max}]$, for any $\aaa\in\Sigma_*$, the second claim readily follows from Lemma \ref{lemma:explicit-assouad}.
\end{proof}

\section{Multifractal analysis of self-similar measures}\label{sec:OSC}
In this section, we fix a self-similar measure $\mu$ supported on a self-similar set $X\subset\R^d$ satisfying the OSC. We wish to apply the results of Section \ref{sec:symbolic} to deduce Theorem \ref{thm:main-spectrum}. The absence of geometric separation of cylinders provides additional difficulties compared to the symbolic case. Heuristically, this is because the codings of geometrically adjacent cylinders may be very far apart in the symbolic space, which may result in nearby cylinders having incomparable masses. This is also the reason why self-similar measures with OSC may fail to be doubling.

Let $U$ be the open set witnessing the OSC. By a theorem of Schief \cite{Schief1994}, we may assume that $X\cap U\ne\emptyset$ and therefore, we may fix a finite word $\kkk\in\Sigma_*$ satisfying $\varphi_{\kkk}(X)\subset U$. We also recall the following characterisation of the OSC \cite{Schief1994}: There is a constant $M<\infty$ such that $\#\Sigma(x,r)\leq M$, for all $x\in X$ and $r>0$, where
\begin{equation*}
  \Sigma(x,r)=\{\iii\in\Sigma_*\colon r_{\iii}\leq r < r_{\iii^-},\,\varphi_{\iii}(X)\cap B(x,r)\ne\emptyset\}\,.
\end{equation*}
From now on, we fix such a constant $M$.

Appending $\kkk$ to any word $\iii\in\Sigma_*$ allows us to control the measures of geometric balls with their symbolic counterparts.
\begin{lemma}\label{lemma:osc-ball-measure}
    There are constants $C,\delta>0$, such that
    \begin{equation*}
        Cp_{\iii}\leq\mu(B(x,\delta r_{\iii}))\leq p_{\iii},
    \end{equation*}
    for any $\iii\in\Sigma_*$ and $x\in \varphi_{\iii\kkk}(X)$.
\end{lemma}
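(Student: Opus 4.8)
The plan is to exploit the fact that appending the fixed word $\kkk$ places the image $\varphi_{\iii\kkk}(X)$ strictly inside the open set $U$ scaled by $\varphi_\iii$, so that around any point $x\in\varphi_{\iii\kkk}(X)$ there is a ball of radius comparable to $r_\iii$ that is contained in $\varphi_\iii(U)$ and hence meets few other cylinders of generation comparable to $\iii$. Concretely, set $\delta_0=\dist(\varphi_\kkk(X),\partial U)/\diam(U)>0$, which is positive since $\varphi_\kkk(X)\subset U$ is compact and $U$ is open and bounded; after rescaling so $\diam(X)=1$ one checks $B(y,\delta_0 r_\iii)\subset\varphi_\iii(U)$ for every $y\in\varphi_{\iii\kkk}(X)$. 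I would fix such a $\delta_0$ once and for all.

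For the upper bound $\mu(B(x,\delta r_\iii))\le p_\iii$: since $x\in\varphi_{\iii\kkk}(X)\subset\varphi_\iii(X)$, any sufficiently small ball around $x$ — in particular $B(x,\delta r_\iii)$ for $\delta\le\delta_0$, using that $\diam\varphi_\iii(X)=r_\iii$ together with the containment $B(x,\delta_0 r_\iii)\subset\varphi_\iii(U)$ and the disjointness $\varphi_\iii(U)\cap\varphi_\jjj(U)=\emptyset$ for the relevant competitors — is covered, up to the $\mu$-null overlap set, only by $\varphi_\iii(X)$ among generation-$|\iii|$ cylinders. Then $\mu(B(x,\delta r_\iii))\le\mu(\varphi_\iii(X))\ge p_\iii$ by the self-similarity relation $\mu_\pp=\sum p_i\varphi_i\mu_\pp$ iterated; one has to be slightly careful with the overlaps, but under the OSC the set of points coded by two different words is $\mu$-null, so $\mu(\varphi_\iii X)=p_\iii$ and the bound is immediate. (In fact the cleanest route is: $B(x,\delta r_\iii)\cap X\subset\varphi_\iii(X)$ up to a $\mu$-null set for $\delta$ small, giving $\mu(B(x,\delta r_\iii))\le p_\iii$ directly.)

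For the lower bound $Cp_\iii\le\mu(B(x,\delta r_\iii))$: here I use that $x\in\varphi_{\iii\kkk}(X)$, so $B(x,\delta r_\iii)\supset\varphi_{\iii\kkk}(X)$ once $\delta\ge r_\kkk$ (recall $\diam\varphi_{\iii\kkk}(X)=r_\iii r_\kkk$ and $x$ lies in this set). Hence $\mu(B(x,\delta r_\iii))\ge\mu(\varphi_{\iii\kkk}(X))\ge p_{\iii\kkk}=p_\iii p_\kkk$, so $C=p_\kkk>0$ works. Thus choosing $\delta$ with $r_\kkk\le\delta\le\delta_0$ — which forces a compatibility check that such a $\delta$ exists; if $r_\kkk>\delta_0$ one simply replaces $\kkk$ by a longer admissible word, or rescales, since shrinking $\kkk$'s contraction ratio only helps — yields both inequalities simultaneously with $C=p_\kkk$. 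The main obstacle, and the only genuinely delicate point, is the upper bound: one must verify carefully that for the chosen $\delta$ no cylinder $\varphi_\jjj(X)$ with $r_\jjj\approx r_\iii$ and $\jjj\ne\iii$ can contribute mass to $B(x,\delta r_\iii)$, which is exactly where the separation built into the OSC (via $B(x,\delta_0 r_\iii)\subset\varphi_\iii(U)$ and disjointness of the $\varphi_\jjj(U)$) is used, together with the fact that $\mu$ charges the overlap set $\bigcup_{\iii\ne\jjj}\varphi_\iii(X)\cap\varphi_\jjj(X)$ with zero mass.
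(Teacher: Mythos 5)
Your argument is correct and follows the same route as the paper: fix $\kkk$ so that $\varphi_{\iii\kkk}(X)$ lies a definite proportion of $r_\iii$ away from $\varphi_\iii(\partial U)$, use the disjointness of the $\varphi_\jjj(U)$ to trap $B(x,\delta r_\iii)\cap X$ inside $\varphi_\iii(X)$ for the upper bound, and contain a cylinder for the lower bound. Two small points worth tightening. First, you do not need to normalise by $\diam U$: since $\varphi_\iii$ scales all distances by exactly $r_\iii$, one has $\dist(\varphi_{\iii\kkk}(X),\varphi_\iii(\partial U))=r_\iii\,\dist(\varphi_\kkk(X),\partial U)$, so $\delta_0=\dist(\varphi_\kkk(X),\partial U)$ already works. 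Second, and more substantively, your lower bound imposes $\delta\ge r_\kkk$ to force $\varphi_{\iii\kkk}(X)\subset B(x,\delta r_\iii)$, which then has to be reconciled with $\delta\le\delta_0$; your ``replace $\kkk$ by a longer word'' fix is valid (extending $\kkk$ increases $\dist(\varphi_\kkk(X),\partial U)$ and decreases $r_\kkk$), but it is cleaner to avoid the constraint altogether. For \emph{any} $\delta>0$ and $x\in\varphi_\iii(X)$, pick a coding $\jjj$ of $x$ extending $\iii$ and let $m$ be minimal with $r_{\jjj|_m}\le\delta r_\iii$; then $\varphi_{\jjj|_m}(X)\subset B(x,\delta r_\iii)$ and $r_{\jjj|_m}>r_{\min}\delta r_\iii$, so $m-|\iii|$ is bounded by a constant depending only on $\delta,r_{\min},r_{\max}$, giving $p_{\jjj|_m}\ge C(\delta)\,p_\iii$ with no compatibility condition on $\delta$. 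This is the content of the paper's remark that the lower bound is ``trivial,'' and it removes the only soft spot in your write-up.
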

\begin{proof}
Let $\iii\in\Sigma_*$ and $x\in\varphi_{\iii}(X)$ and observe first that the lower bound is trivial, since $\varphi_{\iii i}(X)\subset B(x,r_{\iii})$, for any $i\in\Lambda$. It remains to prove the upper bound. Since $\varphi_{\kkk}(X)$ is compact and $\varphi_{\kkk}(X)\subset U$, we have that $\dist(\varphi_{\kkk}(X),\partial U)>0$ and therefore we may choose $0<\delta<\dist(\varphi_{\kkk}(X),\partial U)$. By self-similarity, we then have
\begin{equation}\label{eq:delta-distance}
    \dist(\varphi_{\iii\kkk}(X),\varphi_{\iii}(\partial U))>\delta r_{\iii}.
\end{equation}
In particular, since the OSC implies that $\varphi_{\iii}(U)\cap \varphi_{\jjj}(U)=\emptyset$ for all $\jjj\in \Sigma_{|\iii|}$ with $\jjj\ne\iii$ and since $\varphi_{\iii\kkk}(X)\subset \varphi_{\iii}(U)$, we get by \eqref{eq:delta-distance} that for any $x\in \varphi_{\iii\kkk}(X)$, we have $B(x,\delta r_{\iii})\cap X\subset \varphi_{\iii}(X)$ and therefore
\begin{equation*}
    \mu(B(x,\delta r_{\iii}))\leq p_{\iii}.
\end{equation*}
\end{proof}
Towards a proof of Theorem \ref{thm:main-spectrum}, we begin with the following proposition, which generalises Theorem \ref{thm:exact-dim} by dropping the assumption that $\mu$ is doubling.

\begin{proposition}
    \label{prop:exact-dim-non-doubling}
  Let $\mu$ be a self-similar measure satisfying the OSC. Then for any ergodic measure $\nu$ on $X$, we have
  \begin{equation*}
    \dima(\mu,x)=\dima\mu,
  \end{equation*}
  for $\nu$-almost every $x\in X$.
  \end{proposition}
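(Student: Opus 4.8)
The plan is to establish the inequality $\dima(\mu,x)\ge\dima\mu$ for $\nu$-a.e.\ $x$; the reverse inequality holds at every $x\in X$, since a constant realising the global Assouad dimension also realises the pointwise one. If $\mu$ is doubling then $\dima\mu<\infty$ (in fact $\dima\mu=\alpha_{\max}$) and the statement is exactly Theorem~\ref{thm:exact-dim}, so I would assume from now on that $\mu$ is \emph{not} doubling. Then $\dima\mu=+\infty$ — if $\dima\mu<\infty$ one obtains $\mu(B(x,2r))\le C\mu(B(x,r))$ by specialising to $R=2r$ — and the task reduces to showing that $\dima(\mu,x)=+\infty$ for $\nu$-a.e.\ $x$.

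The heart of the argument is a \emph{finitary} bad-configuration lemma: for every $K\ge1$ there are finite words $\aaa=\aaa(K)$ and $\bbb=\bbb(K)$ with $r_\aaa\asymp r_\bbb$, $\dist(\varphi_\aaa(X),\varphi_\bbb(X))\le C_0 r_\aaa$ and $p_\bbb\ge Kp_\aaa$, where $C_0$ depends only on the IFS. To get this, note that failure of doubling makes $\mu(B(x,2r))/\mu(B(x,r))$ unbounded over $x\in X$, $r>0$; picking $x,r$ with this ratio at least $KM$ ($M$ the bounded-overlap constant of the OSC), covering $B(x,2r)$ by the at most $M$ cylinders of $\Sigma(x,2r)$ and using $\mu(\varphi_\jjj(X))=p_\jjj$ (a standard consequence of the OSC), some $\bbb\in\Sigma(x,2r)$ has $p_\bbb\ge\mu(B(x,2r))/M\ge K\mu(B(x,r))$; taking $\aaa=\iii'|_{n(\iii',r)}$ for some $\iii'\in\pi^{-1}(x)$ gives $\varphi_\aaa(X)\subset B(x,r)$, hence $p_\aaa\le\mu(B(x,r))$, and the three properties follow (the comparability $r_\aaa\asymp r_\bbb$ being automatic from the definition of $\Sigma(x,2r)$). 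The point of these words is the following: for \emph{any} finite word $\mathbf{w}$ and any $\iii\in[\mathbf{w}\aaa\kkk]$ the point $x=\pi(\iii)$ lies in $\varphi_{\mathbf{w}\aaa\kkk}(X)$, so Lemma~\ref{lemma:osc-ball-measure} gives $\mu(B(x,\delta r_{\mathbf{w}\aaa}))\le p_{\mathbf{w}\aaa}=p_{\mathbf{w}}p_\aaa$, whereas pushing the geometric relation between $\aaa$ and $\bbb$ forward by the similarity $\varphi_{\mathbf{w}}$ yields $\varphi_{\mathbf{w}\bbb}(X)\subset B(x,C_1 r_{\mathbf{w}\aaa})$ for a constant $C_1=C_1(\text{IFS})$, hence $\mu(B(x,C_1 r_{\mathbf{w}\aaa}))\ge p_{\mathbf{w}}p_\bbb$. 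Therefore
\[
\frac{\mu(B(x,C_1 r_{\mathbf{w}\aaa}))}{\mu(B(x,\delta r_{\mathbf{w}\aaa}))}\ge\frac{p_\bbb}{p_\aaa}\ge K
\quad\text{and}\quad
\frac{C_1 r_{\mathbf{w}\aaa}}{\delta r_{\mathbf{w}\aaa}}=\frac{C_1}{\delta}\,,
\]
so $x$ carries a doubling discrepancy of size $\ge K$ across a scale pair of the \emph{fixed} ratio $C_1/\delta$, at a scale that tends to $0$ as $|\mathbf{w}|\to\infty$.

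To transfer this to $\nu$-a.e.\ $x$, write $\nu=\pi_*\tilde\nu$ with $\tilde\nu$ ergodic for the shift (and charging every cylinder, as holds for the self-similar measures to which the result will be applied). For each $k\in\N$ fix the word $\aaa(2^k)\kkk$ above. Since $\tilde\nu([\aaa(2^k)\kkk])>0$, Birkhoff's theorem (applied for each $k$, then intersecting over $k$) shows that for $\tilde\nu$-a.e.\ $\iii$ each of these words appears with positive frequency, in particular infinitely often, along $\iii$; choosing occurrences greedily one gets positions $N_1<N_2<\cdots\to\infty$ with $\sigma^{N_k}\iii\in[\aaa(2^k)\kkk]$, i.e.\ $\iii\in[\,\iii|_{N_k}\,\aaa(2^k)\kkk\,]$. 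By the previous paragraph, $x=\pi(\iii)$ then carries, for every $k$, a doubling discrepancy of size $\ge2^k$ across a scale pair of ratio $C_1/\delta$ and of magnitude tending to $0$. Consequently no pair $(s,C)$ can satisfy $\mu(B(x,R))\le C(R/r)^s\mu(B(x,r))$ for all $0<r<R$, as evaluating at these scale pairs would force $2^k\le C(C_1/\delta)^s$ for every $k$; hence $\dima(\mu,x)=+\infty=\dima\mu$ for $\nu$-a.e.\ $x$, which is what was wanted.

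The step I expect to be most delicate is the finitary extraction in the second paragraph: turning the purely quantitative failure of the doubling property into two \emph{concrete} words $\aaa,\bbb$ whose geometric configuration reproduces an arbitrarily large mass imbalance up to a uniformly bounded change of scale, and doing so cleanly in the presence of overlapping cylinders — which is precisely where Lemma~\ref{lemma:osc-ball-measure} and the bounded-overlap form of the OSC are needed (the identity $\mu(\varphi_{\mathbf{w}\bbb}(X))=p_{\mathbf{w}}p_\bbb$ also tacitly uses that the OSC forces $\mu$ to charge no overlap set). A secondary point to be careful about is the precise meaning of ``ergodic measure on $X$'': the argument genuinely uses that $\tilde\nu$ charges the cylinders $[\aaa(2^k)\kkk]$, which holds for fully supported ergodic measures — and hence for all self-similar measures — but fails, for instance, for a point mass at a fixed point of one of the $\varphi_i$.
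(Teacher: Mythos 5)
Your proof is correct and follows essentially the same outline as the paper's: reduce to the non-doubling case, exhibit a sequence of ``bad configurations'' of pairs of words of comparable size, adjacent in $X$, with arbitrarily large mass imbalance, and then use Birkhoff to show that $\nu$-a.e.\ coding contains each such pair preceded by $\kkk$, so that Lemma~\ref{lemma:osc-ball-measure} lets you realise the imbalance as a doubling discrepancy at arbitrarily small scales. The one genuine difference is that you re-derive the bad-configuration lemma from first principles (choosing $x,r$ with ratio $\ge KM$, covering $B(x,2r)$ by the $\le M$ cylinders of $\Sigma(x,2r)$, and pigeonholing), whereas the paper simply cites Yung \cite[Theorem~2.3]{Yung2007} for the existence of words $\iii_n,\jjj_n$ with $\varphi_{\jjj_n}(X)\subset U_{r_{\iii_n}}(\varphi_{\iii_n}(X))$ and $p_{\jjj_n}\ge n p_{\iii_n}$; your derivation is self-contained and checks out. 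Your caveat at the end — that the Birkhoff step really requires $\tilde\nu$ to charge the relevant cylinders $[\aaa(2^k)\kkk]$, so that ``any ergodic $\nu$'' should be read as ``any fully supported ergodic $\nu$'' (in particular any ergodic measure projected from a shift-invariant measure giving positive mass to all cylinders, which covers the self-similar measures for which the proposition is later invoked) — is a legitimate observation; the paper's proof has the same implicit requirement, inherited from the appeal to \cite[Lemma~4.7]{Anttila2022}.
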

\begin{proof}
  If $\mu$ is doubling, the claim is precisely Theorem \ref{thm:exact-dim}. Therefore, we assume that $\mu$ is not doubling, so that $\dima\mu=\infty$. It then suffices to show that at $\nu$-almost all points, $\mu$ is not pointwise doubling. For $A\subset X$ and $\varepsilon>0$ we let $U_\varepsilon(A)$ denote the open $\varepsilon$-neighbourhood of $A$. Since $\mu$ is not doubling, by \cite[Theorem 2.3]{Yung2007}, for any $n\in\N$, there are words $\iii_n,\jjj_n\in\Sigma_*$ satisfying $\varphi_{\jjj_n}(X)\subset U_{r_{\iii_n}}(\varphi_{\iii_n}(X))$ and $p_{\jjj_n}\geq np_{\iii_n}$. 
  We let
  \begin{equation*}
    \mathcal{N}=\{\iii\in\Sigma\colon\iii_n\kkk\sqsubset\iii,\,\forall n\in\N\}
  \end{equation*}
  and observe that a simple application of Birkhoff's ergodic theorem gives $\nu(\mathcal{N})=1$, see \cite[Lemma 4.7]{Anttila2022}. We will finish the proof by showing that if $\iii\in \mathcal{N}$, then $\mu$ is not doubling at $\pi(\iii)$.
  To that end, let  $\iii\in\mathcal{N}$ and for $n\in\N$ choose $m\in\N$, such that $\iii|_{m+|\kkk|}=\jjj\iii_n\kkk$, where $\jjj=\iii|_{n-|\iii_n\kkk|}$, and let $r_n=r_{\iii|_m}$. By Lemma \ref{lemma:osc-ball-measure}, we have
    \begin{equation*}
        \mu(B(\pi(\iii),\delta r_n))\leq p_{\iii_m}=p_{\jjj}p_{\iii_n}.
    \end{equation*}
  Moreover, as $\varphi_{\jjj \jjj_n}(X)\subset B(\varphi_{\iii|_m}(X),r_{n})\subset B(\pi(\iii),2r_{n})$, it follows that
  \begin{equation*}
    \mu(B(\pi(\iii),2r_n))\geq p_{\jjj\jjj_n}=p_{\jjj}p_{\jjj_n}\geq n p_{\jjj}p_{\iii_n}\geq n\mu(B(\pi(\iii),\delta r_n)).
  \end{equation*}
  Since this holds true for all $n\in\N$, it follows that $\mu$ is not doubling at $\pi(\iii)$.
\end{proof}
The previous proposition implies Claim \eqref{thm:main-2} in Theorem \ref{thm:main-doubling}.
\begin{corollary}\label{cor:main-2}
  Let $\mu$ is a non-doubling self-similar measure satisfying the OSC and let $s=\dimh X$. Then
  \begin{equation*}
    \HH^{s}(D(\mu))=0.
  \end{equation*}
\end{corollary}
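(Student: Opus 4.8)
The plan is to reduce the statement to Proposition~\ref{prop:exact-dim-non-doubling} by testing the set $D(\mu)$ against the natural $s$-dimensional self-similar measure on $X$. Since $\mu$ is non-doubling we have $\dima\mu=\infty$, and since $\dima(\mu,x)$ is finite if and only if $\mu$ is doubling at $x$, the set $D(\mu)$ is precisely $\{x\in X\colon\dima(\mu,x)<\infty\}$. Thus it suffices to produce a measure $\nu$ with $\nu(D(\mu))=0$ and $\HH^s|_X\ll\nu$.

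First I would introduce the canonical self-similar measure $\mu_{\mathbf s}$ associated to the weight vector $\mathbf s=(r_1^s,\dots,r_m^s)$; this is an honest probability vector because, under the OSC, $s=\dimh X$ is the similarity dimension, so $\sum_i r_i^s=1$. As $\mu_{\mathbf s}=\pi\nu_{\mathbf s}$ for the shift-ergodic Bernoulli measure $\nu_{\mathbf s}$, the measure $\mu_{\mathbf s}$ is ergodic, so Proposition~\ref{prop:exact-dim-non-doubling} applies and gives $\dima(\mu,x)=\dima\mu=\infty$ for $\mu_{\mathbf s}$-almost every $x$; equivalently $\mu_{\mathbf s}(D(\mu))=0$.

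Next I would recall the classical fact that, under the OSC, $0<\HH^s(X)<\infty$ (Schief's theorem supplying the positivity) and that the normalised restriction $\HH^s(X)^{-1}\,\HH^s|_X$ is itself the self-similar measure with weights $\mathbf s$: indeed $\HH^s$ scales by $r^s$ under a similarity of ratio $r$, and under the OSC the overlaps $\varphi_i(X)\cap\varphi_j(X)$ with $i\ne j$ are $\HH^s$-null, so $\HH^s(X)^{-1}\HH^s|_X$ satisfies \eqref{eq:ss_meas_def} with weights $\mathbf s$ and hence coincides with $\mu_{\mathbf s}$ by uniqueness. In particular $\HH^s|_X$ and $\mu_{\mathbf s}$ are mutually absolutely continuous. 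Since $\mu$ is fully supported, $D(\mu)\subseteq X$, and combining the two steps yields $\HH^s(D(\mu))=\HH^s(X)\,\mu_{\mathbf s}(D(\mu))=0$.

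The only genuine obstacle is the identification of $\HH^s|_X$ with a constant multiple of $\mu_{\mathbf s}$, which rests on Schief's theorem $\HH^s(X)>0$ and on the $\HH^s$-negligibility of the overlaps under the OSC; both are standard. (Alternatively one can simply invoke that $X$ is Ahlfors $s$-regular under the OSC, with $\mu_{\mathbf s}$ comparable to $\HH^s|_X$, which gives the same conclusion.) Everything else is immediate from Proposition~\ref{prop:exact-dim-non-doubling} and the finiteness criterion for $\dima(\mu,\cdot)$.
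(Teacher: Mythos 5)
Your proposal is correct and takes essentially the same approach as the paper: apply Proposition~\ref{prop:exact-dim-non-doubling} to the canonical (ergodic) self-similar measure with weights $(r_1^s,\dots,r_m^s)$ and then use its equivalence with $\HH^s|_X$. You have simply spelled out the justification for that equivalence, which the paper states without proof.
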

\begin{proof}
  Recall that $\HH^{s}$ is equivalent to the self-similar measure $\nu$ corresponding to the probability vector $r_1^{s},\ldots,r_m^s$. Since all self-similar measures are ergodic, the claim follows from Proposition \ref{prop:exact-dim-non-doubling}.
\end{proof}

In light of Proposition \ref{prop:exact-dim-non-doubling}, there is no a priori expectation for the sets $D_\alpha^\A$ to be empty for $\alpha_{\max}<\alpha<\infty$. However, as will be shown soon, this in fact is the case. Let us begin with an illustrative example

\begin{example}\label{ex:Ber}
Let $\mu$ be the image of the $(p,1-p)$-Bernoulli measure on the unit interval corresponding to the self similar IFS $\{x\mapsto x/2,x\mapsto1/2+x/2\}$. Suppose $p<\tfrac12$. Then $\mu$ is not doubling, $\alpha_{\min}=\tfrac{\log(1-p)}{-\log 2}$ and $\alpha_{\max}=\tfrac{\log p}{-\log2}$. It is easy to see that $\dima(\mu,0)=\alpha_{\max}$ and that $\dima(\mu,q)=\alpha_{\min}$ for all dyadic rationals $q\in(0,1]$. Suppose $x\in[0,1]$ is not a dyadic rational, let $\iii\in\{0,1\}$ be the binary coding of $x$ and assume that $\mu$ is doubling at $x$. Then it is readily verified that the number of consecutive zero's in $\iii$ is bounded by a constant, say $N$. Consider $0<r<R<1$ and let $k,m$ be integers satisfying $2^{-k}>2R\ge 2^{-k-1}$ and $2^{-k-m+1}>r\ge 2^{-k-m}$. Let $J$ be the dyadic interval of length $2^{-k-m}$ containing $x$. If  $B(x,R)$ is contained in a unique dyadic interval $I$ of length $2^{-k}$, we have
\begin{align*}
   \mu(B(x,R))&\le\mu (I)\le p^{-m}\mu(J)\le p^{-m}\mu(B(x,r))=2^{m\alpha_{\max}}\mu (B(x,r))\\&\le 2^{3\alpha_{\max}}\mu(B(x,r))\left(\frac{R}{r}\right)^{\alpha_{\max}}. 
\end{align*}
On the other hand, if $B(x,R)$ intersects two adjacent dyadic intervals $I_1,I_2$ of length $2^{-k}$, let $a$ denote their common endpoint, say the left endpoint of $I_2$ and the right endpoint of $I_1$. Then $\mu(I_1)\ge\tfrac{p}{1-p}\mu(I_2)$, so if $x\in I_1$, then 
\begin{align*}
    \mu(B(x,R))&\le p^{-1}\mu(I_1)\le p^{-m-1}\mu(J)\le p^{-m-1}\mu(B(x,r))\\
    &\le 2^{4\alpha_{\max}}\mu(B(x,r))\left(\frac{R}{r}\right)^{\alpha_{\max}}\,.
\end{align*}
Finally, if $x\in I_2$, it follows that
\[\mu(I_1)\le\left(\frac{1-p}{p}\right)^{N-1}\mu(I_2)\,,\]
so that
\begin{align*}
    \mu(B(x,R))&\le \mu(I_1)+\mu(I_2)\\
    &\le\left(1+\left(\frac{1-p}{p}\right)^{N-1}\right)\mu(I_2)\\
    &\le \left(1+\left(\frac{1-p}{p}\right)^{N-1}\right)p^{-m}\mu(J)\\
    &\le C\mu(B(x,r))\left(\frac{R}r\right)^{\alpha_{\max}},
\end{align*}
for $C=2^{3\alpha_{\max}}(1+(\tfrac{1-p}{p})^{N-1})$. It is easy to see that also $\dima(\mu,x)\ge\alpha_{\min}$ for all $x\in[0,1]$. So we conclude that for all $x$, either $\dima(\mu,x)=\infty$ (if $\mu$ is not doubling at $x$) or else $\alpha_{\min}\le\dima(\mu,x)\le\alpha_{\max}$.
\end{example}

The phenomenon of the previous example generalises to self-similar measures on $\R^d$ satisfying the OSC. In the general case the proof is more complicated partly since, unlike in Example \ref{ex:Ber}, it is difficult to identify the codings of geometrically adjacent cylinders. However, if the measure is doubling at $x$, this essentially allows us to transfer mass between adjacent cylinders at exponentially separated scales. For such points $x$, this implies that the measure of balls $B(x,r)$ scales as in the symbolic setting up to an arbitrarily small error in the exponent. This is the content of Lemma \ref{lemma:case-study} which amounts to the main technical part of the proof of the following theorem.

\begin{theorem}\label{thm:doubling-iff-leq-alpha_max}
  Let $\mu$ be a self-similar measure satisfying the OSC. Then
  \begin{equation*}
    \alpha_{\min}\leq\dima(\mu,x)\leq \alpha_{\max}
  \end{equation*}
  if and only if $\mu$ is doubling at $x$. In particular, $D_\alpha^\A=\emptyset$ for $\alpha\not\in[\alpha_{\min},\alpha_{\max}]\cup\{\dima\mu\}$
\end{theorem}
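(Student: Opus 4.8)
The plan is to prove the two implications separately. One direction is immediate: if $\alpha_{\min}\le\dima(\mu,x)\le\alpha_{\max}<\infty$, then $\dima(\mu,x)$ is finite, which is equivalent to $\mu$ being doubling at $x$, as recalled in the discussion following the definition of $\dima(\mu,x)$. The content is therefore entirely in the converse: assuming $\mu$ is doubling at $x$, we must upgrade the bare finiteness of $\dima(\mu,x)$ to the quantitative bound $\dima(\mu,x)\le\alpha_{\max}$ (the lower bound $\dima(\mu,x)\ge\alpha_{\min}$ should follow from a soft argument, e.g. by choosing $R$ just above some $r_{\iii}$ and $r$ just below it so that the ratio $\mu(B(x,R))/\mu(B(x,r))$ is bounded below by the minimal one-step mass ratio, mirroring Example \ref{ex:Ber}). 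Once the converse is established, the last sentence is a formality: Theorem \ref{thm:exact-dim} (or rather the general fact that $\dima\mu\in\{\alpha_{\max},\infty\}$) shows that $\dima\mu$ is either $\alpha_{\max}$ or $+\infty$, and for any $x$ either $\mu$ is doubling at $x$, in which case $\dima(\mu,x)\in[\alpha_{\min},\alpha_{\max}]$, or $\mu$ is not doubling at $x$, in which case $\dima(\mu,x)=\infty=\dima\mu$; either way $\dima(\mu,x)\in[\alpha_{\min},\alpha_{\max}]\cup\{\dima\mu\}$, so $D_\alpha^\A=\emptyset$ off that set.

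For the converse I would fix $x=\pi(\iii)$ at which $\mu$ is doubling, fix $0<r<R<1$, and compare $\mu(B(x,R))$ and $\mu(B(x,r))$ through the symbolic coding. Using \eqref{eq:balls-are-cylinders}-type reasoning together with the OSC bound $\#\Sigma(x,\roo)\le M$, the ball $B(x,R)$ is covered by at most $M$ cylinders $\varphi_{\aaa}(X)$ with $r_{\aaa}\asymp R$, and similarly $B(x,r)$ contains (a controlled portion of) a cylinder $\varphi_{\bbb}(X)$ with $r_{\bbb}\asymp r$; by Lemma \ref{lemma:osc-ball-measure}, appending $\kkk$, the mass $\mu(B(x,\delta r_{\bbb}))$ is comparable to $p_{\bbb}$. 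The naive estimate $\mu(B(x,R))/\mu(B(x,r))\lesssim p_{\aaa}/p_{\bbb}$ is \emph{not} enough, because $\bbb$ need not extend $\aaa$ — geometric adjacency does not translate to symbolic adjacency, which is exactly the source of non-doubling. The key device, which I expect to be packaged as the cited Lemma \ref{lemma:case-study}, is that doubling of $\mu$ at $x$ forces the masses of geometrically adjacent cylinders at a sequence of exponentially separated scales to be comparable up to a factor that costs only an arbitrarily small amount in the exponent. Concretely, one chains from the cylinder near scale $R$ down to the cylinder near scale $r$ through $O(\log(R/r))$ geometrically adjacent pairs, and at each pair the doubling constant $C(x)$ controls the ratio of masses; since the one-step symbolic mass drop of a cylinder of any fixed depth $N$ is at least $p_{\min}^N$, the ratio $p_{\aaa}/p_{\bbb}$ is bounded by $(R/r)^{\alpha_{\max}}$ times a factor $(R/r)^{\eps}$ absorbing the doubling corrections, and letting $N\to\infty$ (and hence $\eps\to0$) yields $\dima(\mu,x)\le\alpha_{\max}$.

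The main obstacle is precisely the heart of Lemma \ref{lemma:case-study}: making rigorous the claim that pointwise doubling at $x$ lets one ``transfer mass between adjacent cylinders at exponentially separated scales'' with total multiplicative error $(R/r)^{o(1)}$. This requires (i) identifying, for the coding $\iii$ of $x$, a sequence of scales $\roo_1>\roo_2>\cdots$ with $\roo_{j+1}\asymp c\,\roo_j$ at which $B(x,\roo_j)$ meets the boundary of the cylinder containing $x$, so that a neighbouring cylinder of comparable diameter is available; (ii) applying the doubling inequality $\mu(B(x,2\roo))\le C(x)\mu(B(x,\roo))$ to conclude that this neighbouring cylinder carries mass comparable to $\varphi_{\iii|_{n(j)}}(X)$; and (iii) bookkeeping the accumulated constants $C(x)^{O(\log(R/r)/\log(1/c))}=(R/r)^{O(\log C(x)/\log(1/c))}$ and showing this exponent can be made $<\eps$ by working along a subsequence of scales where the cylinder depth increments are large. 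Points (i)-(ii) are where the geometry of the OSC (via Schief's $M$-bound and Lemma \ref{lemma:osc-ball-measure}) does the real work, and care is needed because the relevant neighbouring cylinder may itself be the one containing $x$ at a slightly different scale, so the chaining must be set up to make genuine progress in depth at each step; I would expect this to be the most delicate combinatorial-geometric part of the argument.
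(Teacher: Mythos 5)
Your outline takes essentially the same route as the paper: one direction is the finiteness $\Leftrightarrow$ doubling observation, the lower bound $\dima(\mu,x)\ge\alpha_{\min}$ is a separate soft OSC estimate, and the content is an upper bound proved by chaining through scales with the accumulated doubling cost absorbed into a factor $(R/r)^{\varepsilon}$. You also correctly anticipate that this chaining is exactly what Lemma~\ref{lemma:case-study} encapsulates, and that one must guarantee ``genuine progress in depth at each step.''

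Where your sketch stops short of a proof is precisely point (iii). You write that the accumulated cost $C(x)^{O(\log(R/r)/\log(1/c))}$ can be made small ``by working along a subsequence of scales where the cylinder depth increments are large,'' but the issue is that you cannot simply prescribe such a subsequence: you do not know in advance which of the $\le M$ cylinders $\iii\in\Sigma(x,R)$ near $B(x,R)$ carries the mass you need to anchor to, and different choices demand different amounts of doubling. The device the paper uses is to enumerate $\Sigma(x,R)=\{\iii_1,\dots,\iii_\ell\}$ in \emph{decreasing} order of $p_{\iii_i}$ and to fix a nested sequence of integers $N_1\le N_2\le\cdots\le N_M$ satisfying $N_{i-1}\log C/(N_i\log 2)<\varepsilon/2$. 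The point $x$ falls in a first region $A_i$ where $d(x,\varphi_{\iii_i}(X))\le 2^{-N_i}R$ while $d(x,\varphi_{\iii_j}(X))>2^{-N_j}R$ for $j<i$; this forces $\mu(B(x,2^{-N_{i-1}}R))\le Mp_{\iii_i}$, so $N_{i-1}$ doublings suffice, and at the same time $R/d\ge 2^{N_i}$ so that $C^{N_{i-1}}\le (R/d)^{\varepsilon/2}$. This dichotomy (either $d(x,\varphi_{\iii_i}(X))\le r$, done with a uniform constant, or an intermediate scale $d\le 2^{-N}R$ with exponent $\alpha_{\max}+\varepsilon$) is then iterated and telescoped. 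Without the mass-sorted enumeration plus nested thresholds, the bookkeeping you gesture at in (iii) does not close, so this is the genuine gap in the sketch. A secondary quibble: your suggested lower-bound argument (taking $R$ just above and $r$ just below a single $r_{\iii}$) produces only a one-step mass ratio and does not by itself yield $\dima(\mu,x)\ge\alpha_{\min}$; one should instead compare the maximal-mass cylinder in $\Sigma(x,r)$ with its prefix at scale $R$, as the paper does, to get a bound of the form $\mu(B(x,R))/\mu(B(x,r))\ge c(R/r)^{\alpha_{\min}}$ valid for all $r<R/2$.
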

\begin{proof}
If $\alpha_{\min}\leq\dima(\mu,x)\leq \alpha_{\max}<\infty$, then $\mu$ is doubling at $x$ by \cite[Proposition 3.1]{Anttila2022}. Towards the converse implication, our first goal is to first verify the estimate $\dima(\mu,x)\leq \alpha_{\max}$. The following geometric lemma is the key technical ingredient of the argument; See Figure \ref{fig:lemma-fig} for an illustration.
\begin{figure}
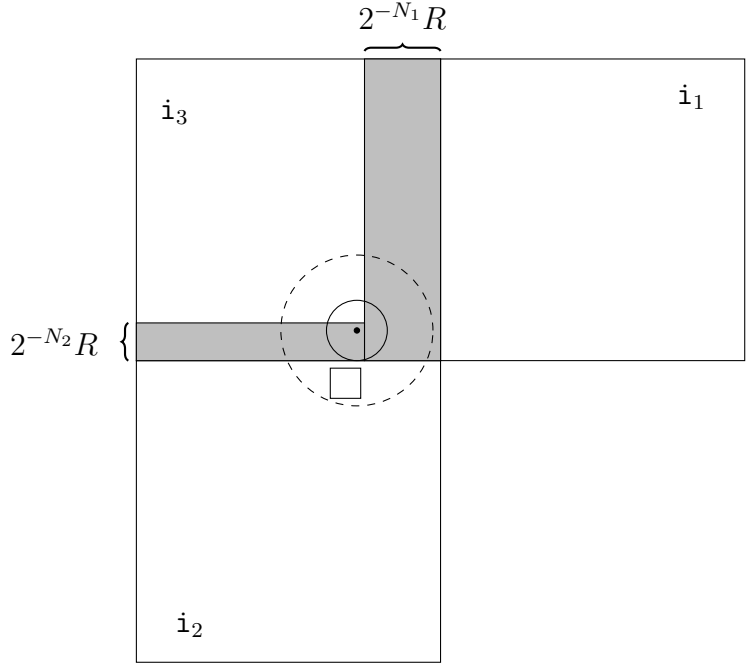

    \captionsetup{justification=justified}
    \includestandalone{pw-lemma-case-1}
    \caption{Proof of Lemma \ref{lemma:case-study}: The point in the picture is $x\in A_2$ and letting $d=d(x,\varphi_{\iii_2}(X))$, we see that $B(x,2d)$ contains a sub-cylinder of $[\iii_2]$ of size comparable to $d$. This gives a lower bound for $\mu(B(x,d))$. The choice of $N_2$ ensures that a sufficiently large portion of the mass of $B(x,R)$ comes from $[\iii_2]$.}
    \label{fig:lemma-fig}
\end{figure}
\begin{lemma}\label{lemma:case-study}
  Let $\mu$ be a self-similar measure satisfying the OSC, and assume that $\mu$ is doubling at $x$. Then for all $\varepsilon>0$ there exists $N\in\N$ and a constant $C_{\varepsilon}=C(\varepsilon,x,\mu)>0$, such that for all $0<r<R$, either
  \begin{equation}\label{eq:case-1}
    \frac{\mu(B(x,R))}{\mu(B(x,r))}\leq C_{\varepsilon}\left(\frac{R}{r}\right)^{\alpha_{\max}},
  \end{equation}
  or there exists a scale $r<d\leq2^{-N}R$, such that
  \begin{equation}\label{eq:case-2}
    \frac{\mu(B(x,R))}{\mu(B(x,d))}\leq \left(\frac{R}{d}\right)^{\alpha_{\max}+\varepsilon}.
  \end{equation}
\end{lemma}
\begin{proof}
  For a given $\varepsilon>0$, we let $N_0=1$ and $N=N_1\in \N$ be large enough to satisfy
  \begin{equation*}
    \frac{N_0\log C}{N_1\log 2}<\frac{\varepsilon}{2}\quad\text {and}\quad\frac{CM}{p_{\min}r^{\alpha_{\max}}_{\min}}\leq 2^{\varepsilon N_1/2}\,,
  \end{equation*}
  where $C>1$ is the doubling constant of $\mu$ at $x$.
  Moreover, we inductively choose integers $N_2\leq N_3\leq \ldots \leq N_M$ satisfying
  \begin{equation*}
    \frac{N_{i-1}\log C}{N_i\log 2}<\frac{\varepsilon}{2}.
  \end{equation*}
  Applying the doubling property repeatedly, we get
  \begin{equation}\label{eq:N_i-bound}
    \mu(B(x,R))\leq C^{N_i}\mu(B(x,2^{-N_i}R))\le C^{N_M}\mu(B(x,2^{-N_i}R))\,,
  \end{equation}
  for all $i=1,\ldots, M$ and all $R>0$.

  Let us now pick $0<r<R$ and enumerate $\Sigma(x,R)=\{\iii_i\}_{i=1}^\ell$ in the order of decreasing mass, that is so that $p_{\iii_1}\geq p_{\iii_2}\geq \ldots \geq p_{\iii_\ell}$. Recall that $\ell\leq M$.
  Let $A_1=\{y\in X\colon d(y,\varphi_{\iii_1}(X))\leq 2^{-M_1}R\}$, and inductively,
  \begin{equation*}
    A_i=\{y\in X\colon d(y,\varphi_{\iii_i}(X))\leq 2^{-N_i}R,\,d(y,\varphi_{\iii_{j}}(X))> 2^{-N_{j}}R, \text{ for all }j<i\},
  \end{equation*}
  for each $i=2,\ldots,\ell$. Then it is easy to see that $x\in \bigcup_{i=1}^\ell A_i$. Indeed, at least one of the words $\{\iii_i\}_{i=1}^\ell$, say $\iii_j$, is a prefix of a coding of $x$ and therefore $d(x,\varphi_{\iii_j}(X))=0\leq 2^{-N_j}R$. Now either $x\in \bigcup_{i=1}^{j-1} A_i$, or 
  we have that $d(x,\varphi_{\iii_i}(X))>2^{-N_i}R$, for all $i\leq j-1$, so $x\in A_j$.
  
  Now, we fix $1\leq i \leq \ell$, such that $x\in A_i$. Since $N_1\leq N_2\leq\ldots\leq N_M$, we have $d(x,\varphi_{\iii_{j}}(X))>2^{-N_j}R\geq 2^{-N_{i-1}}R$, for all $j<i$. In particular, $B(x,2^{-N_{i-1}}R)\cap X\subset\bigcup_{k=i}^\ell\varphi_{\iii_k}(X)$, and thus
  \begin{equation}\label{eq:N_i-upper-bound}
    \mu(B(x,2^{-N_{i-1}}R))\leq\sum_{k=i}^\ell p_{\iii_k}\leq M p_{\iii_i}. 
  \end{equation}
  Let us divide the proof into two cases:

  \subsection*{Case 1:}
  If $d(x,\varphi_{\iii_i}(X))\leq r$, then there is a point $y\in \varphi_{\iii_i}(X)$, such that $y\in B(x,r)$. Recall that $\varphi_{\iii_i}(X)$ is compact. Let $\jjj$ be a coding of $y$. Since $y\in \varphi_{\iii_i}(X)$, we may assume that $\jjj|_n=\iii_i$, where $n\in\N$ is the unique integer satisfying $r_{\jjj|_n}\leq R < r_{\jjj|_{n-1}}$. Then we choose $k\in\N$ to be the unique integer satisfying $r_{\jjj|_{n+k}}\leq r < r_{\jjj|_{n+k-1}}$. Since $r > r_{\jjj|_{n+k+1}}$, we have $\varphi_{\jjj_{n+k+1}}(X)\subset B(y,r)$ and therefore,
  \begin{equation*}
    \mu(B(y,r))\geq p_{\min}p_{\jjj|_{n+k}}\,.
  \end{equation*}
  Recalling that $C$ is the doubling constant of $\mu$ at $x$ and by using the fact that $d(x,\varphi_{\iii_i}(X))\leq r$, we have
  \begin{equation}\label{eq:alaraja}
    \mu(B(x,r))\geq C^{-1}\mu(B(x,2r))\geq C^{-1}\mu(B(y,r))\geq C^{-1}p_{\min}p_{\jjj|_{n+k}}\,,
  \end{equation}
  where $C>1$ does not depend on $\varepsilon$ nor $N$.
  Therefore, using \eqref{eq:N_i-bound} and \eqref{eq:N_i-upper-bound}
  \begin{equation}
  \begin{split}
    \frac{\mu(B(x,R))}{\mu(B(x,r))}&\leq  C^{N_M}\frac{\mu(B(x,2^{-N_{i-1}}R))}{\mu(B(x,r))}\leq C^{N_M}\frac{Mp_{\jjj_{n}}}{C^{-1}p_{\min}p_{\jjj_{n+k}}}\\
    &=C^{N_M+1}\frac{M}{p_{\min}}\left(\prod_{j=n+1}^kp_{\jjj_j}\right)^{-1}=C^{N_M+1}\frac{M}{p_{\min}}\left(\prod_{j=n+1}^kr_j^{\frac{\log p_{\jjj_j}}{\log r_{\jjj_j}}}\right)^{-1}\\
    &\leq C^{N_M+1}\frac{M}{p_{\min}}\left(\prod_{j=n+1}^kr_{\jjj_j}^{\alpha_{\max}}\right)^{-1}=C^{N_M+1}\frac{M}{p_{\min}}\left(\frac{r_{\jjj|_n}}{r_{\jjj|_{n+k}}}\right)^{\alpha_{\max}} \\
    &\leq C^{N_M+1}\frac{M}{p_{\min}r^{\alpha_{\max}}_{\min}}\left(\frac{R}{r}\right)^{\alpha_{\max}},\label{eq:upper-bound-calc}
\end{split}
\end{equation}
  so in this case, the first inequality holds with $C_{\varepsilon}=C^{N_M+1}\frac{M}{p_{\min}r^{\alpha_{\max}}_{\min}}$.
  
  \subsection*{Case 2:}
  If $r<d(x,\varphi_{\iii_i}(X))\leq 2^{-N_i}R$, then we let $d=d(x,\varphi_{\iii_i}(X))$ so that $2^{N_i}\leq \frac{R}{d}$. Since $\varphi_{\iii_i}(X)$ is compact, there is a point $y\in B(x,d)\cap \varphi_{\iii_i}(X)$ and replacing the ball $B(x,r)$ by  $B(x,d)$ in the computation \eqref{eq:alaraja}, we see that
  \begin{equation*}
    \mu(B(x,d))\geq C^{-1}p_{\min}p_{\jjj|_{n+k}}\,,
  \end{equation*}
  where $\jjj$ is a coding of $y$, satisfying $\jjj|_n=\iii_i$ and $k\in\N$ is the unique integer satisfying $r_{\jjj|_{n+k}}\leq d < r_{\jjj|_{n+k-1}}$. Using \eqref{eq:N_i-bound}, \eqref{eq:N_i-upper-bound}, and the choice of $N_i$, and computing as in \eqref{eq:upper-bound-calc}, we obtain
  \begin{align*}
    \frac{\mu(B(x,R))}{\mu(B(x,d))}&\leq C^{N_{i-1}}\frac{\mu(B(x,2^{-N_{i-1}}R))}{\mu(B(x,d))}\leq C^{N_{i-1}} \frac{Mp_{\jjj_{n}}}{C^{-1}p_{\min}p_{\jjj_{n+k}}}\\
    &\leq (2^{N_i})^{\frac{N_{i-1}\log C}{N_{i}\log 2}} \frac{CM}{p_{\min}r_{\min}^{\alpha_{\max}}}\left(\frac{R}{d}\right)^{\alpha_{\max}}\leq \left(\frac{R}{d}\right)^{\alpha_{\max}+\varepsilon}\,,
  \end{align*}
  as required.
\end{proof}

Let now $x\in X$, fix $0<r<R$ and assume that $\mu$ is doubling at $x$. Let $\varepsilon>0$ and let $N\in\N$ and $C_\varepsilon$ be as in Lemma \ref{lemma:case-study}. Let us show that, for some $k=0,1,\ldots$, there are $r<d_k\leq 2^{-N}d_{k-1}\leq\ldots\leq 2^{-kN}R=2^{-kN}d_0$ satisfying
\begin{equation}\label{eq:telescope}
  \frac{\mu(B(x,d_{m-1}))}{\mu(B(x,d_{m}))}\leq \left(\frac{d_{m-1}}{d_{m}}\right)^{\alpha_{\max}+\varepsilon},
\end{equation}
for all $m=1,\ldots,k$, and 
\begin{equation*}
  \frac{\mu(B(x,d_{k}))}{\mu(B(x,r))}\leq C_{\varepsilon}\left(\frac{d_{k}}{r}\right)^{\alpha_{\max}+\varepsilon}.
\end{equation*}

To that end, we apply Lemma \ref{lemma:case-study} for the $0<r<R$. Now either \eqref{eq:case-1} holds, in which case we are done by setting $d_0=R$, or we find $r<d_1\leq 2^{-N}R$, satisfying
\begin{equation}\label{eq:telescope-last-term}
  \frac{\mu(B(x,R))}{\mu(B(x,d_1))}\leq \left(\frac{R}{d_1}\right)^{\alpha_{\max}+\varepsilon}.
\end{equation}
In this case, we apply Lemma \ref{lemma:case-study} again for $0<r<d_1$. Again, either \eqref{eq:case-1} is satisfied with $d_1$ in place of $R$, in which case we stop the process or we find a scale $r<d_2\leq 2^{-N}d_1$, satisfying
\begin{equation*}
  \frac{\mu(B(x,d_1))}{\mu(B(x,d_2))}\leq \left(\frac{d_1}{d_2}\right)^{\alpha_{\max}+\varepsilon}.
\end{equation*}
Continuing in this manner by applying Lemma \ref{lemma:case-study} iteratively, at some point the process stops, since the sequence $d_\ell$ decreases exponentially with $\ell$. By using \eqref{eq:telescope} and \eqref{eq:telescope-last-term}, we then have
\begin{align*}
  \frac{\mu(B(x,R))}{\mu(B(x,r))}&=\frac{\mu(B(x,d_k))}{\mu(B(x,r))} \prod_{m=1}^k\frac{\mu(B(x,d_{m-1}))}{\mu(B(x,d_m))}\\
  &\leq C_{\varepsilon}\left(\frac{d_{k}}{r}\right)^{\alpha_{\max}+\varepsilon}\prod_{m=1}^{k} \left(\frac{d_{m-1}}{d_m}\right)^{\alpha_{\max}+\varepsilon}\\
  &=C_{\varepsilon}\left(\frac{R}{r}\right)^{\alpha_{\max}+\varepsilon}
\end{align*}
Since this is true for all $0<r<R$, we have $\dima(\mu,x)\leq \alpha_{\max}+\varepsilon$ and the claim follows since $\varepsilon>0$ was arbitrary.

Finally, it remains to show that $\dima(\mu,x)\geq \alpha_{\min}$. Let $0<r<\frac{R}{2}$ and let $\iii_r\in\Sigma(x,r)$ be the word satisfying $p_{\iii_r}=\max_{\iii\in\Sigma(x,r)}p_\iii$. We also let $k\in\N$ be the unique integer such that $r_{\iii_r|_k}< \frac{R}{2} \leq r_{\iii_r|_{k-1}}$. Since $\varphi_{\iii_r}(X)\subset \varphi_{\iii_r|_k}(X)$ and $\iii_r\in\Sigma(x,r)$, we have
\begin{equation*}
  d(x,\varphi_{\iii_r|_k}(X))\leq d(x,\varphi_{\iii_r}(X))\leq r < \frac{R}{2},
\end{equation*}
and for any $y\in\varphi_{\iii_r|_k}(X)$,
\begin{equation*}
  d(x,y)\leq d(x,\varphi_{\iii_r|_k}(X))+\diam(\varphi_{\iii_r|_k}(X))<\frac{R}{2}+r_{\iii_r|_k}<R.
\end{equation*}
Thus $\varphi_{\iii_r|_k}(X)\subset B(x,R)$, and a calculation similar to \eqref{eq:upper-bound-calc} gives
\begin{equation*}
  \frac{\mu(B(x,R))}{\mu(B(x,r))}\geq \frac{p_{\iii_r|_k}}{Mp_{\iii_r}}\ge\frac{1}{M} \left(\frac{r_{\iii_r|_k}}{r_{\iii_r}}\right)^{\alpha_{\min}}\geq \frac{r_{\min}^{\alpha_{\min}}}{2M} \left(\frac{R}{r}\right)^{\alpha_{\min}},
\end{equation*}
which finishes the proof.
\end{proof}

We are now ready to prove Theorem \ref{thm:main-spectrum}. We exploit the choice of the word $\kkk$, and build an abundant sequence in the symbolic space, which allows us to apply the techniques developed in Section \ref{sec:symbolic}. More precisely, for $n\geq |\kkk|$, we let
\begin{equation*}
    \Gamma_n=\{\iii\kkk\in \Sigma_{n}\colon \iii\in\Sigma_{n-|\kkk|}\}.
\end{equation*}
It is routine to verify that the sequence $(\Gamma_n)$ is abundant. Passing to the sub-self-similar set defined by using $\Gamma_n$ as the alphabet ensures enough separation to calculate the pointwise Assouad dimension on the symbolic side:

\begin{lemma}\label{lemma:subset-non-doubling}
  For any $\iii\in\Sigma(\Gamma_n)$, we have
  \begin{equation*}
    \dima(\mu,\pi(\iii))=\dima(\nu,\iii).
  \end{equation*}
\end{lemma}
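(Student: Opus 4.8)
The goal is to prove that for words $\iii$ lying in the symbolic space built over the induced alphabet $\Gamma_n$ (each letter of which ends in $\kkk$), the geometric pointwise Assouad dimension $\dima(\mu,\pi(\iii))$ agrees exactly with the symbolic pointwise Assouad dimension $\dima(\nu,\iii)$. The plan is to prove the two inequalities separately, and the essential input on both sides is Lemma~\ref{lemma:osc-ball-measure}, which sandwiches $\mu(B(x,\delta r_{\jjj}))$ between $Cp_{\jjj}$ and $p_{\jjj}$ whenever $x\in\varphi_{\jjj\kkk}(X)$; the point of restricting to $\Sigma(\Gamma_n)$ is precisely that every prefix of $\iii$ of length a multiple of $n$ has the form $\jjj\kkk$, so this sandwich is available at a geometrically dense sequence of scales along $\pi(\iii)$.

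First I would prove $\dima(\mu,\pi(\iii))\le\dima(\nu,\iii)$. Write $x=\pi(\iii)$ and let $\iii=\aaa_1\aaa_2\cdots$ with each $\aaa_j\in\Gamma_n$, so $\iii|_{jn}$ ends in $\kkk$. Given $0<r<R$, choose indices $j<k$ so that $r_{\iii|_{kn}}\le r<r_{\iii|_{(k-1)n}}$ and $r_{\iii|_{jn}}$ is comparable to $R$ (within a bounded multiplicative factor depending only on $n$ and $r_{\min}$; one may need to enlarge $R$ by a bounded factor, absorbed into the constant). By Lemma~\ref{lemma:osc-ball-measure}, $\mu(B(x,\delta r_{\iii|_{jn}}))\le p_{\iii|_{jn}}$ and $\mu(B(x,\delta r_{\iii|_{kn}}))\ge Cp_{\iii|_{kn}}$. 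Hence
\[
\frac{\mu(B(x,\delta r_{\iii|_{jn}}))}{\mu(B(x,\delta r_{\iii|_{kn}}))}\le C^{-1}\frac{p_{\iii|_{jn}}}{p_{\iii|_{kn}}}=C^{-1}p_{\aaa_{j+1}\cdots\aaa_k}^{-1}\le C^{-1}\Big(\frac{r_{\iii|_{jn}}}{r_{\iii|_{kn}}}\Big)^{s}
\]
for any $s>\dima(\nu,\iii)$, using Lemma~\ref{lemma:explicit-assouad} applied to the subword $\aaa_{j+1}\cdots\aaa_k\sqsubset\iii$ (which controls $\log p_{\bbb}/\log r_{\bbb}$ for all long subwords, up to an error that vanishes as the length grows, exactly as in the proof of Lemma~\ref{lemma:subset}). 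Adjusting radii from $\delta r_{\iii|_{jn}}$ to $R$ and from $\delta r_{\iii|_{kn}}$ to $r$ costs only bounded factors and a bounded change in the exponent, so $\dima(\mu,x)\le s$, and letting $s\downarrow\dima(\nu,\iii)$ gives the bound. (If $\dima(\nu,\iii)=\infty$ there is nothing to prove.)

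For the reverse inequality $\dima(\mu,\pi(\iii))\ge\dima(\nu,\iii)$, I would run essentially the same computation in the other direction: given any $s<\dima(\nu,\iii)$, Lemma~\ref{lemma:explicit-assouad} supplies, for infinitely many lengths $\ell$, subwords $\bbb\sqsubset\iii$ with $|\bbb|=\ell$ and $p_{\bbb}\le r_{\bbb}^{\,s}$ (up to a subexponential factor). Each such $\bbb$ occurs as $\bbb=\sigma^{m}\iii|_{m+\ell}$; by possibly trimming $\bbb$ at both ends by at most $n$ symbols we may arrange that $\bbb$ starts and ends at multiples of $n$, i.e.\ $\iii|_{m}$ and $\iii|_{m+\ell}$ both end in $\kkk$ (this changes the exponent only by a bounded amount over $\ell$). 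Then with $R=\delta r_{\iii|_m}$ and $r=\delta r_{\iii|_{m+\ell}}$, Lemma~\ref{lemma:osc-ball-measure} gives $\mu(B(x,R))\ge Cp_{\iii|_m}$ and $\mu(B(x,r))\le p_{\iii|_{m+\ell}}$, so $\mu(B(x,R))/\mu(B(x,r))\ge C\,p_{\bbb}^{-1}\ge C\,(R/r)^{s'}$ for a slightly smaller exponent $s'$; taking $\ell\to\infty$ shows $\dima(\mu,x)\ge s'$, and then $s'\to s\to\dima(\nu,\iii)$. Combining the two inequalities proves the lemma.

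The main obstacle is purely bookkeeping rather than conceptual: one must check carefully that rounding radii to the nearest ``good'' scale (a scale $\delta r_{\iii|_{jn}}$ with $jn$ a multiple of $n$) and trimming subwords to align with multiples of $n$ introduce only bounded multiplicative constants in the ratios and only an additive $O(1/\ell)$ perturbation in the exponent, so that in the limit defining $\dima$ these corrections disappear. The point $x=\pi(\iii)$ may a priori have several codings, but $\iii$ itself is a legitimate coding and the argument only ever uses prefixes of $\iii$, so non-injectivity of $\pi$ under the OSC causes no trouble here; likewise the constant $M$ bounding $\#\Sigma(x,r)$ is not needed for this particular lemma since we are not comparing $\mu(B(x,r))$ to a sum over competing cylinders, only to the single cylinder along $\iii$.
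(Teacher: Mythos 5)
Your argument is correct and takes essentially the same route as the paper: both hinge on applying Lemma~\ref{lemma:osc-ball-measure} along the prefixes $\iii|_{jn}$, which end in $\kkk$ precisely because $\iii\in\Sigma(\Gamma_n)$. The paper packages the two applications into the single uniform comparison $p_{\iii|_m}\leq\mu(B(\pi(\iii),r))\leq Cp_{\iii|_m}$, with $m$ determined by $r$ exactly as in the symbolic metric, from which the equality of pointwise Assouad dimensions is immediate; you instead prove the two inequalities separately via Lemma~\ref{lemma:explicit-assouad}, which is a cosmetic reorganization of the same computation.
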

\begin{proof}
Let $\iii=\iii_1\kkk\iii_2\kkk\cdots\in\Sigma(\Gamma_n)$, where $\iii_j\in\Sigma_{n-|k|}$ for all $j\in\N$.  It suffices to verify the following: There is a constant $C>0$, such that
\begin{equation*}
  p_{\iii|_m}\leq \mu(B(\pi(\iii),r))\leq Cp_{\iii|_m},
\end{equation*}
for all $r>0$, where $m\in\N$ is the unique integer satisfying $r_{\iii|_m}< r \leq r_{\iii|_{m-1}}$. Note that the lower bound is trivial, since $\varphi_{\iii|_m}(X)\subset B(\pi(\iii),r)$, so it remains to prove the upper bound. Let $k$ denote the unique integer satisfying
\begin{equation*}
  r_{\iii_1}r_{\iii_2}\ldots r_{\iii_k}r_{\kkk}^k< r \leq r_{\iii_1}r_{\iii_2}\ldots r_{\iii_{k-1}}r_{\kkk}^{k-1},
\end{equation*}
and note that $\iii|_{m}=\iii_1\kkk\iii_2\kkk\ldots \iii_{k}\kkk\lll$, for some $\lll\in \Sigma_*$ satisfying $|\lll|\leq n$. Applying Lemma \ref{lemma:osc-ball-measure}, gives
\begin{equation*}
  \mu(B(\pi(\iii),r))\leq p_{\iii_1}\cdots p_{\iii_{k-1}}p_\kkk^{k-2}=\frac{1}{p_{\kkk}^2p_{\iii_k}p_{\lll}}p_{\iii_1}\cdots p_{\iii_{k}}p_\kkk^{k}p_{\lll}\leq Cp_{\iii|_m},
\end{equation*}
where $C=p_{\min}^{-2n-|k|}$.
\end{proof}

We conclude by combining the results of this section to prove Theorems \ref{thm:main-doubling} and \ref{thm:main-spectrum} starting with the latter.
\begin{proof}[Proof of Theorem \ref{thm:main-spectrum}]
  Recall that 
  \[D_{\alpha}^\A\subset U_{\alpha}^{\A}\subset U_{\alpha}^{\loc}\] 
  so $\dimh D_{\alpha}^\A\le\dimh U_{\alpha}^\A\le \overline{f}(\alpha)$, by using the fact that Theorem \ref{thm:classical-formalism} holds for self-similar measures under the OSC. For $\alpha\in(\alpha_{\min},\alpha_{\max})$ and $\varepsilon>0$ let $\Omega_\varepsilon(\alpha)$ be as in \eqref{eq:Omega-construction} and recall that $\Omega_\varepsilon(\alpha)\subset \Sigma(\Gamma_n)$. By Lemma \ref{lemma:subset-non-doubling}, we have $\pi(\Omega_\varepsilon(\alpha))\subset D_{ \alpha}^\A$ and the OSC implies that $\dimh\pi(\Omega_\varepsilon(\alpha))=\dimh \Omega_\varepsilon(\alpha)$. Letting $\varepsilon\downarrow 0$ and applying Lemma \ref{lemma:Omega-properties}, we arrive at $\dimh D_\alpha^\A\geq \barf(\alpha)$. 
  
  If $\alpha=\alpha_{\min}$, it suffices to show by Lemma \ref{lemma:alpha_min-lower-bound}, that $\pi(\Sigma(\Lambda_{\min}))\subset D_\alpha^\A(\mu)$. For this, let $\iii\in \Sigma(\Lambda_{\min})$, and note that by Theorem \ref{thm:doubling-iff-leq-alpha_max}, it is enough to show that $\dima(\mu,\pi(\iii))\leq \alpha_{\min}$. Let $0<r<R$ and note that for any $\jjj\in \Sigma(x,R)$, we have
  \begin{equation*}
    p_{\jjj}=\prod_{j=1}^{|\jjj|}r_j^{\frac{\log p_j}{\log r_j}}\leq r_\jjj^{\alpha_{\min}}\leq R^{\alpha_{\min}}.
  \end{equation*}
  Moreover, by choosing $k\in\N$, such that $r_{\iii|_{k}}< r\leq r_{\iii|_{k-1}}$, we see that
  \begin{equation*}
    \mu(B(x,r))\geq p_{\iii|_{k}}=\prod_{i=1}^{k}r_i^{\frac{\log p_i}{\log r_i}}=r_{\iii|_k}^{\alpha_{\min}} \geq r_{\min} r^{\alpha_{\min}}.
  \end{equation*}
  Therefore,
  \begin{equation*}
    \frac{\mu(B(x,R))}{\mu(B(x,r))}\leq \frac{\sum_{\jjj\in\Sigma(x,R)}p_\jjj}{p_{\iii|_k}}\leq \frac{M}{r_{\min}}\left(\frac{R}{r}\right)^{\alpha_{\min}},
  \end{equation*}
  implying $\dima(\mu,\pi(\iii))\leq \alpha_{\min}$.

  The second claim follows easily from Proposition \ref{prop:exact-dim-non-doubling}, since by letting $\nu$ be the canonical self-similar measure satisfying $\dimh \nu=\dimh X$, we have that $\nu(D_\infty^\A)=1$, and thus $\dimh D_\infty^\A\geq \dimh\nu=\dimh X$. Finally, it was shown in Theorem \ref{thm:doubling-iff-leq-alpha_max}, that $D_\alpha^\A=\emptyset$ for $\alpha\not\in[\alpha_{\min},\alpha_{\max}]\cup\{\dima\mu\}$.
\end{proof}
\begin{proof}[Proof of Theorem \ref{thm:main-doubling}]
  Claim \eqref{thm:main-1} follows from Theorem \ref{thm:main-spectrum}, since $D_{\alpha(0)}^\A(\mu)\subset D(\mu)$, and since $\dimh D_{\alpha(0)}^\A(\mu)=f(0)=\dimh X$. Claim \eqref{thm:main-2} is Corollary \ref{cor:main-2}.
\end{proof}

\begin{remark}\label{rem:alpha_max}
    Excluding $\alpha_{\max}$ from Theorem \ref{thm:main-spectrum} is necessary: In Example \ref{ex:Ber} we saw that for the $(p,1-p)$-Bernoulli measure on $[0,1]$, $\dima(\mu,0)=\alpha_{\max}=\frac{\log p}{-\log 2}$. However, it is not too difficult to see that $0$ is the only element in $D_{\alpha_{\max}}^\A$. Indeed, if $0<x=\pi(\iii)\le 1$ is a dyadic rational, then $\dima(\mu,x)=\alpha_{\min}$. On the other hand, if $x$ is not a dyadic rational and the number of consecutive zeros in $\iii$ is unbounded, $\mu$ is not doubling at $x$ and thus $\dima(\mu,x)=\infty\neq\alpha_{\max}$. Finally, if the number of consecutive zero's in $\iii$ is bounded by $N\in\N$, the computations in Example \ref{ex:Ber} may be refined by replacing all $p^{-m}$ terms by $C p^{-m(1-1/N)}(1-p)^{-m/N}$ for a suitable constant $C$. This yields an upper bound $\dima(\mu,x)\le(1-\tfrac1N)\alpha_{\max}+\tfrac1N\alpha_{\min}<\alpha_{\max}$. There are also examples at the other extreme where $\dimh D_{\alpha_{\max}}^\A=\dimh X$. For example take a self-similar set, where $p_{\kkk}=p_{\min}^{|\kkk|}$. This ensures that $\alpha_{\max}(\Gamma_n)=\alpha_{\max}$ for all $n$ in Lemma \ref{lemma:exists-point-with-dim} which allows us to extend Lemma \ref{lemma:Omega-properties} for $\alpha=\alpha_{\max}$. The details are left to the interested reader.
\end{remark}

\begin{remark}
  The multifractal spectrum for the classical pointwise dimension has been studied in a variety of more general settings such as for self-similar sets under less restrictive separation conditions \cite{LauNgai1999, FengLau2009, Feng2012, BarralFeng2021}. For self-affine sets the situation is more complicated, but some results are known \cite{JordanRams2011,Fraser2021}. In \cite{Anttila2022} an exact dimensionality property for the pointwise Assouad dimension of certain self-affine carpets was established. It would be of great interest to study the pointwise doubling property, and more generally, the multifractal spectrum of the pointwise Assouad dimension in these contexts.
\end{remark}
 
\section*{Acknowledgements}
We are grateful to Thomas Jordan and Alex Rutar for valuable conversations on the topics of this paper. We also thank Alex Rutar for introducing us to the methods in \cite{Rutar2023a}.

\bibliography{bibliography}
\bibliographystyle{abbrv}

\end{document}